\theoremstyle{plain}
\newtheorem{thm}{\protect\theoremname}[section]
  \theoremstyle{plain}
  \newtheorem{lem}[thm]{\protect\lemmaname}
  \theoremstyle{remark}
  \newtheorem{claim}[thm]{\protect\claimname}
  \theoremstyle{plain}
  \newtheorem{prop}[thm]{\protect\propositionname}
  \theoremstyle{remark}
  \newtheorem{rem}[thm]{\protect\remarkname}
  \theoremstyle{definition}
  \newtheorem{defn}[thm]{\protect\definitionname}
  \theoremstyle{plain}
  \newtheorem{cor}[thm]{\protect\corollaryname}
  \newcounter{casectr}
 \newcommand{\om}{\omega}
 \newcommand{\sig}{\sigma}   
 \newcommand{\del}{\delta}  \newcommand{\Del}{\Delta}
 \newcommand{\lam}{\lambda}   
 \newcommand{\eps}{\varepsilon}
  \newcommand{\wed}{\wedge}
 \newcommand{\ip}[1]{\langle #1 \rangle}
 \renewcommand{\tilde}{\widetilde}
 \def\b1{\text{\bf\large 1}}  %big bold 1
 \def\simto{\overset{\sim}{\longrightarrow}}
 \def\ip<#1>{\langle#1\rangle}   %inner product--write `\ip<contents>'
 \newcommand{\Hom}{\operatorname{Hom}}
 \newcommand{\Lie}{\operatorname{Lie}}
 \newcommand{\lt}{\operatorname{lt.}}
 \newcommand{\Vir}{\operatorname{Vir}}
 \newcommand{\fg}{\mathfrak{g}}
\newcommand{\fl}{\mathfrak{l}}
 \newcommand{\fu}{\mathfrak{u}}
\newcommand{\Invlt}{\operatorname{Invlt}}
\newcommand{\bc}{\mathbb{C}}
\newcommand{\br}{\mathbb{R}}
\newcommand{\bz}{\mathbb{Z}}
 \newcommand{\ca}{\mathcal{A}}
 \newcommand{\cf}{\mathcal{F}}
 \newcommand{\cl}{\mathcal{L}}
 \newcommand{\co}{\mathcal{O}}
 \newcommand{\cs}{\mathcal{S}}
  \providecommand{\casename}{Case}
  \providecommand{\claimname}{Claim}
  \providecommand{\corollaryname}{Corollary}
  \providecommand{\definitionname}{Definition}
  \providecommand{\lemmaname}{Lemma}
  \providecommand{\propositionname}{Proposition}
  \providecommand{\remarkname}{Remark}
\providecommand{\theoremname}{Theorem}
\newcommand{\beqn}{\begin{equation}}
\newcommand{\eeqn}{\end{equation}}
\theoremstyle{plain}
\newtheorem{theorem}[thm]{Theorem}
\newtheorem{lemma}[thm]{Lemma}
\newtheorem{proposition}[thm]{Proposition}
\newtheorem{corollary}[thm]{Corollary}
\newtheorem{conjecture}[thm]{Conjecture}
\theoremstyle{definition}
\newtheorem{remark}[thm]{Remark}
\newtheorem{definition}[thm]{Definition}
\global\long\def\cl{\mathcal{L}}
\global\long\def\lam{\lambda}
\global\long\def\bc{\mathbb{C}}
\global\long\def\eps{\varepsilon}
\global\long\def\sig{\sigma}
\global\long\def\del{\delta}
\global\long\def\lt{\operatorname{lim}}
\global\long\def\ip#1{\langle#1 \rangle}
\global\long\def\Del{\Delta}
\global\long\def\co{\mathcal{O}}
\global\long\def\cs{\mathcal{S}}
\global\long\def\fh{\mathfrak{h}}
\global\long\def\fg{\mathfrak{g}}
\newcommand\exx{\Bbb{X}}
 \newcommand\elal{\Bbb{L}}
\newcommand{\optop}{\operatorname{top}}
\newcommand{\tor}{\mathcal{T}or}
\newcommand{\ext}{\mathcal{E}xt}
\newcommand{\SL}{\operatorname{SL}}
\def\u.{^{\bullet}}
\newcommand{\gr}{\operatorname{gr}}
\begin{document}

  \title[Saturated tensor cone for symmetrizable
  Kac-Moody algebras]{A study of saturated tensor cone for symmetrizable
  Kac-Moody algebras}
  \author{Merrick Brown and Shrawan Kumar }

\maketitle

\section{Introduction}\label{intro}

Let $\fg$ be a symmetrizable  Kac-Moody Lie algebra with  the standard  Cartan subalgebra $\fh$ and the Weyl group $W$. Let $P_+$ be the set of dominant integral weights. For $\lambda \in P_+$, let $L(\lambda)$ be the irreducible, integrable, highest weight representation of $\fg$ with highest weight $\lambda$. For a positive integer $s$, define the {\em saturated tensor semigroup} as
\begin{align*}
\Gamma_s:= \{(\lambda_1, \dots, \lambda_s,\mu)\in P_+^{s+1}: \exists\,
N>1 \,\,\text{with}\,\, L(N\mu)\subset L(N\lambda_1)\otimes \dots \otimes L(N\lambda_s)\}.
\end{align*}
The aim of this paper is to begin  a systematic study of $\Gamma_s$ in the infinite dimensional symmetrizable Kac-Moody case. 
In this paper, we produce a set of necessary inequalities satisfied by $\Gamma_s$, which we describe now. Let $X =G^{\min}/B$  be the standard full KM-flag variety
associated to $\fg$, where $G^{\min}$ is the `minimal' Kac-Moody group with Lie algebra $\fg$ and $B$ is the standard Borel subgroup of 
$G^{\min}$.  For $w\in W$, let  $X_w = \overline{BwB/B}\subset X$ be the corresponding Schubert variety. Let $\{\eps^w\}_{w\in W} \subset H^*(X,\bz)$ be the 
(Schubert) basis dual (with respect to  the standard pairing) to the basis of the singular homology of $X$ given by the fundamental classes of $X_w$.  
The following result is our first main theorem valid for any symmetrizable $\fg$ (cf. Theorem \ref{thm1}).
\begin{thm}
Let
 $(\lambda_1,\dots,\lambda_s,\mu) \in \Gamma_s$. Then, 
for any $u_1, \dots , u_s, v\in W$ such that $n^v_{u_1, \dots, u_s}\neq 0$, where
$$\eps^{u_1} \dots \eps^{u_s} = \sum_w n^w_{u_1, \dots, u_s}\,\eps^w,$$
we have 
\[\left(\sum^s_{j=1} \lam_j (u_jx_i)\right) - \mu (vx_i) \geq 0, \,\,\,\text{for any}\,\, x_i,\]
where 
 $x_i\in \fh$ is dual to the simple roots of $\fg$.
\end{thm}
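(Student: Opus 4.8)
I would follow the classical GIT / Hilbert--Mumford route of Berenstein--Sjamaar and Kumar, transplanted to the ind-variety $X$; the argument is ultimately a numerical-criterion computation on Schubert cells, and the one genuinely hard ingredient is a transversality statement.

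\emph{Reformulating $\Gamma_s$ as a semistability condition.} Let $X^-=G^{\min}/B^-$, let $\mathcal Y:=X^s\times X^-$ carry the diagonal $G^{\min}$-action, and, with appropriate normalisations, put $\mathcal L:=\mathcal L(\lam_1)\boxtimes\cdots\boxtimes\mathcal L(\lam_s)\boxtimes\mathcal L^-(\mu)$, where $H^0(X,\mathcal L(\lam_j))\cong L(\lam_j)^\vee$ (restricted dual) and $H^0(X^-,\mathcal L^-(\mu))\cong L(\mu)$. By Borel--Weil and complete reducibility of $\bigotimes_j L(N\lam_j)$, one has $H^0(\mathcal Y,\mathcal L^{\otimes N})^{G^{\min}}\cong[\bigotimes_j L(N\lam_j)^\vee\otimes L(N\mu)]^{G^{\min}}$, which is non-zero precisely when $L(N\mu)\subset\bigotimes_j L(N\lam_j)$. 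Hence $(\lam_1,\dots,\lam_s,\mu)\in\Gamma_s$ if and only if the $G^{\min}$-semistable locus $\mathcal Y^{\mathrm{ss}}(\mathcal L)$ is non-empty (the ``$\exists N$'' in the definition of $\Gamma_s$ is exactly what identifies the tensor semigroup with this GIT cone). This is standard in finite type; in the Kac--Moody setting it must be set up via the finite-dimensional approximations of $\mathcal Y$.

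\emph{Producing a semistable point in prescribed Bruhat cells.} Fix $u_1,\dots,u_s,v$ with $n^v_{u_1,\dots,u_s}\neq0$. Recall that $\eps^w$ is represented by the opposite Schubert variety $\overline{B^-wB/B}$ of finite codimension $\ell(w)$, while $[X_v]$ has dimension $\ell(v)$, and that the degree constraint forces $\sum_j\ell(u_j)=\ell(v)$; thus $n^v_{u_1,\dots,u_s}\neq0$ records a non-zero, dimensionally complementary intersection number of generic translates of these Schubert subvarieties. The crux of the proof is to extract from this a $G^{\min}$-semistable point $\bar y=(\bar y_1,\dots,\bar y_s,\bar y^-)\in\mathcal Y$ whose coordinates lie in the \emph{open} Bruhat cells corresponding, under the standard dictionary $\eps^w\leftrightarrow\text{cell}$, to $u_1,\dots,u_s$ in the $X$-factors and to $v$ in the $X^-$-factor. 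Concretely, starting from a non-zero $G^{\min}$-invariant section $\sigma\in H^0(\mathcal Y,\mathcal L^{\otimes N})$ (available by the previous step), the hypothesis $n^v_{u_1,\dots,u_s}\neq0$ should force $\sigma$ not to vanish identically on the product of the corresponding Schubert subvarieties, hence $\sigma(\bar y)\neq0$ for some $\bar y$ in the dense open product of cells inside it, which makes $\bar y$ semistable. I expect this to be the main obstacle: it is the Kac--Moody analogue of Kleiman transversality for Schubert varieties together with density of the semistable locus, it has to be run through finite-dimensional Schubert varieties in $X$ and $X^-$, and one must ensure that $\bar y$ lands in the \emph{open} cells, since landing merely in their closures would invalidate the computation below.

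\emph{The numerical criterion.} Apply the numerical criterion to $\bar y$ and to the one-parameter subgroup $\xi_i\colon\mathbb G_m\to T$ with $\tfrac{d}{dt}\xi_i=-x_i$. One may assume $x_i$ is a genuine integral co-character: the left-hand side of the claimed inequality is unchanged if $x_i$ is modified by an element $z\in\fh$ with $\alpha_j(z)=0$ for all $j$, because $W$ fixes such $z$ and comparing central characters gives $(\sum_j\lam_j-\mu)(z)=0$ whenever $L(N\mu)\subset\bigotimes_j L(N\lam_j)$; alternatively one argues with $\mathbb Q$-one-parameter subgroups. Since $-x_i$ is antidominant, for $\bar y_j$ in the relevant opposite Bruhat cell the limit $\lim_{t\to0}\xi_i(t)\bar y_j$ exists and stays in that cell, and $\mathcal L(\lam_j)$ has $\xi_i$-weight $\lam_j(u_jx_i)$ over it; likewise the $X^-$-coordinate contributes $-\mu(vx_i)$, the sign flip being precisely the effect of $X^-$ versus $X$ (i.e. of $\mathcal L^-(\mu)$ versus $\mathcal L(\lam_j)$). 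As $\mathcal L$ is an external tensor product, $\mu^{\mathcal L}$ is additive over the factors, so semistability of $\bar y$ yields
\[
0\ \le\ \mu^{\mathcal L}(\bar y,\xi_i)\ =\ \Big(\sum_{j=1}^{s}\lam_j(u_jx_i)\Big)-\mu(vx_i),
\]
the asserted inequality. Pinning down the line-bundle normalisations responsible for the signs, and reconciling the cell-versus-$\eps^w$ dictionary with the labelling of $n^v_{u_1,\dots,u_s}$, is then routine.
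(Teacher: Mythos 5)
Your overall architecture --- the Borel--Weil identification of $\Gamma_s$ with non-vanishing of invariant sections, followed by a Hilbert--Mumford computation along a one-parameter subgroup whose weights over the relevant Bruhat cells are exactly $\lam_j(u_jx_i)$ and $-\mu(vx_i)$ --- coincides with the paper's, and your local index computation and the remark about reducing to integral cocharacters are fine. But the step you yourself flag as ``the main obstacle'' is a genuine gap, and the route you propose through it is not one that works here. You need a point where the invariant section is non-zero and whose coordinates lie in the \emph{open} cells attached to $u_1,\dots,u_s,v$, and you propose to get it from a Kac--Moody analogue of Kleiman transversality together with density of the semistable locus. No such transversality theorem is available: $G^{\min}$ is not a finite-type algebraic group, and the generic-translate arguments of the finite-dimensional case do not transplant. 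As written, the existence of your point $\bar y$ is simply asserted.

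The paper circumvents this with two ingredients, neither of which appears in your proposal. First (Proposition \ref{prop5}), it proves by a $K$-theoretic argument --- pairing the classes $[\xi^{u}]$ built from $\ext^{\ell(u)}_{\co_{\bar X}}(\co_{X^u},\co_{\bar{X}})$ against $[\Delta_*\co_{X_w}]$, and using that the cohomological and topological $K$-theoretic structure constants agree in the dimension-matching degree --- that $n^v_{u_1,\dots,u_s}\neq 0$ forces $g_1X^{u_1}\cap\cdots\cap g_sX^{u_s}\cap X_v\neq\emptyset$ for \emph{all} translates $g_j$, with no genericity needed. Second, it does \emph{not} insist that the intersection point lie in the open cells: it only records that $gB\in g_1C^{u_1'}\cap\cdots\cap g_sC^{u_s'}\cap C_{v'}$ for some $u_j'\geq u_j$ and $v'\leq v$, runs the Hilbert--Mumford computation with $u_j',v'$ in place of $u_j,v$, and then uses the Bruhat-order monotonicity $(u_j')^{-1}\lam_j\leq u_j^{-1}\lam_j$ (hence $\lam_j(u_j'x_i)\leq\lam_j(u_jx_i)$, and dually $\mu(v'x_i)\geq\mu(vx_i)$) to recover the stated inequality. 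This second device is precisely what makes the insistence on open cells --- the point where your argument stalls --- unnecessary. To complete your proof you would have to supply both ingredients, or find a genuine substitute for the transversality statement you are assuming.
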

The proof of the theorem relies on the Kac-Moody analogue of the Borel-Weil theorem and the Geometric Invariant Theory (specifically the Hilbert-Mumford index).  We conjecture that the above inequalities are sufficient as well to describe $\Gamma_s$. In fact, we conjecture a much sharper result,
where much fewer inequalities suffice to describe the semigroup $\Gamma_s$. To explain our conjecture, we need some more notation. 

Let $P\supset B$ be a (standard) parabolic subgroup and let $X_P:=G^{\min}/P$ be the corresponding partial flag variety. Let $W_P$ be the Weyl group of $P$ (which is, by definition, the Weyl group of the Levi $L$ of $P$) and let $W^P$ be the set of minimal length coset representatives 
of cosets in $W/W_P$. 
The projection map $X\to X_P$ induces an injective homomorphism $H^*(X_P, \bz) \to H^*(X, \bz)$ and $H^*(X_P, \bz) $ has the Schubert basis
$\{\eps^w_P\}_{w\in W^P}$ such that $\eps^w_P$ goes to   $\eps^w$ for any $w\in W^P$. As defined by Belkale-Kumar [BK, $\S$6] in the finite dimensional case (and extended here in Section 7 for any symmetrizable Kac-Moody case), there is a new deformed product $\odot_0$ in
$H^*(X_P, \bz)$, which is commutative and associative. Now, we are ready to state our conjecture (see Conjecture \ref{conj1}). 
\begin{conjecture} 
Let $\fg$ be any indecomposable symmetrizable Kac-Moody  Lie algebra
and let  $(\lambda_1, \dots, \lambda_s, \mu)\in P_+^{s+1}$. Assume further that none of $\lambda_j$  is $W$-invariant and $\mu-\sum_{j=1}^s \lambda_j\in Q$, where $Q$ is the root lattice of $G$. 
Then, the following are equivalent:

(a)  $(\lambda_1, \dots, \lambda_s, \mu)\in \Gamma_s$.

(b) For every standard maximal parabolic subgroup $P$ in $G^{\min}$ and every choice of
$s+1$-tuples  $(w_1, \dots, w_s, v)\in (W^P)^{s+1}$ such that $\epsilon_P^v$ occurs with coefficient $1$ in 
the deformed product
$$\epsilon_P^{w_1}\odot_0\, \cdots \,\odot_0 \epsilon_P^{w_s}
\in \bigl(H^*(X_P,\Bbb{Z}), \odot_0\bigr),$$
  the following inequality  holds:
    \[
\bigl(\sum_{j=1}^s \lambda_j(w_jx_{P})\bigr)-\mu(vx_P)\geq 0, \tag{$I^P_{(w_1,\dots, w_s,v)}$}
\]
where $\alpha_{i_P}$ is the (unique) simple root not in the Levi of $P$
and $x_P:=x_{i_P}$.
\end{conjecture}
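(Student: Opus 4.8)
I would attack the two implications separately; (a) $\Rightarrow$ (b) is a formal consequence of Theorem 1.1, whereas (b) $\Rightarrow$ (a) carries the content and, following the finite-dimensional theory of Belkale--Kumar and Ressayre, should be approached via Geometric Invariant Theory. \textbf{For (a) $\Rightarrow$ (b):} fix a standard maximal parabolic $P$ and a tuple $(w_1,\dots,w_s,v)\in(W^P)^{s+1}$ with $\epsilon_P^v$ occurring with coefficient $1$ in $\epsilon_P^{w_1}\odot_0\cdots\odot_0\epsilon_P^{w_s}$. The essential property of the deformed product (Section 7, extending [BK, $\S$6]) is that every nonzero structure constant of the iterated $\odot_0$-product coincides with the corresponding structure constant of the ordinary cup product in $H^*(X_P,\bz)$; transporting this under the injective ring homomorphism $H^*(X_P,\bz)\hookrightarrow H^*(X,\bz)$, $\epsilon_P^w\mapsto\epsilon^w$ for $w\in W^P$, gives $n^v_{w_1,\dots,w_s}=1\neq 0$ for the ordinary product in $H^*(X,\bz)$. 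Since $x_P=x_{i_P}$ is one of the $x_i$, Theorem 1.1 applied to $(w_1,\dots,w_s,v)$ with this $x_i$ is exactly the inequality $I^P_{(w_1,\dots,w_s,v)}$. This direction needs neither the non-$W$-invariance of the $\lambda_j$ nor the hypothesis $\mu-\sum_j\lambda_j\in Q$.

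\textbf{For (b) $\Rightarrow$ (a):} by the Kac--Moody Borel--Weil theorem, membership $(\lambda_1,\dots,\lambda_s,\mu)\in\Gamma_s$ is equivalent to the non-vanishing, for some $N>1$, of the multiplicity space $\Hom_{\fg}\bigl(L(N\mu),L(N\lambda_1)\otimes\cdots\otimes L(N\lambda_s)\bigr)$, which one realizes as a space of $G^{\min}$-invariants in the sections of a $G^{\min}$-equivariant line bundle over an ind-variety built from the flag varieties $X$ and $X_P$. I would then run the Hilbert--Mumford criterion: a point is unstable precisely when destabilized by some one-parameter subgroup $\delta$, which after conjugation may be taken dominant with respect to the standard torus and so determines a standard maximal parabolic $P=P(\delta)$; the Hilbert--Mumford index of a suitably generic point then decomposes, along the Schubert cells of $X$ indexed by $W^P$, into summands each equal to $\bigl(\sum_{j=1}^s\lambda_j(w_jx_P)\bigr)-\mu(vx_P)$. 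The core of the Belkale--Kumar--Ressayre analysis is that the tuples $(w_1,\dots,w_s,v)$ actually arising this way --- and in particular the facet-defining ones, which require a Levi-movability (``dominant pair'') condition on the tangent spaces --- are exactly the $\odot_0$-relevant tuples; granting all the inequalities in (b), no point is destabilized, which gives semistability and hence (a). Here the condition $\mu-\sum_j\lambda_j\in Q$ is the necessary lattice-coset restriction (without it one can have (b) while (a) fails), and the non-$W$-invariance of the $\lambda_j$ likewise serves to exclude the degenerate weight directions on which the numerical criterion trivializes. An alternative route would first prove that the full, a priori redundant, family of inequalities of Theorem 1.1 already cuts out $\Gamma_s$ and then reduce from it to the sparse $\odot_0$-family.

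\textbf{Principal obstacle.} Everything hinges on transplanting the finite-dimensional GIT/facet machinery to infinite rank. One must (i) set up a workable semistability theory for the pro-group $G^{\min}$ acting on the relevant ind-variety --- for instance through an exhaustion by finite-dimensional $B$- or Levi-stable subvarieties --- and verify that the Hilbert--Mumford indices and the cohomology products involved remain finite and behave as classically; (ii) prove the Kac--Moody analogues of Kleiman-type transversality for Schubert varieties and of Ressayre's well-covering/dominance analysis, which is what actually pins the $\odot_0$-relevant inequalities to the facets of the cone generated by $\Gamma_s$; and (iii), since the sufficiency of even the full Theorem 1.1 family is not currently known in general, carry that through as well. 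I expect (ii) --- the facet analysis in the Kac--Moody setting --- to be the real bottleneck: (i) is technically demanding but, given the existing Kac--Moody GIT literature, plausibly routine, and (iii) should come packaged with (ii).
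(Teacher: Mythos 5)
The statement you are proving is Conjecture \ref{conj1}, and the paper itself offers no proof of it: it is stated as an open conjecture, with the only supporting evidence being its verification for $s=2$ in types $A_1^{(1)}$ and $A_2^{(2)}$ (Theorems \ref{thm7.5} and \ref{thm8.6}). So there is no ``paper's own proof'' to match your proposal against, and your proposal, by your own admission, is a program rather than a proof. Your direction (a) $\Rightarrow$ (b) is correct and is exactly what the paper records: since the $\odot_0$-structure constants, when nonzero, agree with the ordinary cup-product structure constants (the $\tau$-exponents are nonnegative and telescope under iteration, so the coefficient of $\epsilon_P^v$ in the iterated $\odot_0$-product is either $0$ or the ordinary coefficient), coefficient $1$ in $\odot_0$ gives $n^v_{w_1,\dots,w_s}=1$ in $H^*(X,\bz)$, and Theorem \ref{thm1} applied with $x_i=x_P$ yields $I^P_{(w_1,\dots,w_s,v)}$. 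You are also right that this direction uses neither the non-$W$-invariance of the $\lambda_j$ nor the lattice condition.

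The genuine gap is that (b) $\Rightarrow$ (a) is left entirely unexecuted, and each of your steps (i)--(iii) is a substantial open problem, not a verification. In particular, even the weaker claim that the full (redundant) family of inequalities from Theorem \ref{thm1} cuts out $\Gamma_s$ is not known for general symmetrizable $\fg$; and the Kleiman-transversality and Ressayre-type well-covering/Levi-movability arguments that pin the $\odot_0$-inequalities to facets rely in the finite case on the action of the full group $G$ on products of flag varieties and on finite-dimensionality of $G/P$, neither of which is available here (note that the paper must already work on $(X^-)^s\times X$ with $X^-=G/B^-$ and opposite Schubert varieties of finite codimension just to get the necessary direction). It is also worth flagging that the route by which the paper actually establishes the only known cases of the conjecture is completely different from your GIT program: for $\widehat{\mathfrak{sl}_2}$ and $A_2^{(2)}$ with $s=2$ the authors explicitly determine $\Gamma_2$ by representation theory --- the decomposition of Proposition \ref{tensor} into Virasoro coset characters, the non-vanishing Lemma \ref{virasoro}, and the determination of $\delta$-maximal components (Propositions \ref{maximum} and \ref{maxS}) --- and only afterwards match the resulting inequalities against the explicitly computed $\odot_0$-products (binomial coefficients for $\widehat{\SL_2}$, Kitchloo's computation for $A_2^{(2)}$). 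So if your aim is to produce evidence for the conjecture rather than to resolve it, the representation-theoretic route is the one that has actually been made to work.
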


This conjecture is motivated from its validity in the finite case due to Belkale-Kumar [BK, Theorem 22].  (For a survey of these results in the finite case, see [K$_5$].)  So far, the only evidence of its validity in the infinite dimensional case is shown for $s=2$ and $\fg$ of types $A_1^{(1)}$ and  $A_2^{(2)}$ (cf. Theorems \ref{thm7.5} and \ref{thm8.6}). In these cases, we explicitly determine $\Gamma_2$ and thereby show the validity of the conjecture. 

A positive integer  $d_o$ is called a {\em saturation factor} for $\fg$ if  for any $\Lambda$, $\Lambda'$, $\Lambda'' \in P_+$
such that  $\Lambda-\Lambda'-\Lambda''\in Q$ and 
 $L(N\Lambda)$ is a submodule of $L(N\Lambda')\otimes L(N\Lambda'')$,  for some $N\in \bz_{>0}$, then 
 $L(d_o\Lambda)$ is a submodule of $L(d_o\Lambda')\otimes L(d_o\Lambda'')$.

We prove the following result on saturation factors (cf. Corollaries \ref{cor6.4} and \ref{cor8.7}).
\begin{thm} For $A_1^{(1)}$, any integer $d_o>1$ is a saturation factor.  For $A_2^{(2)}$, $4$ is a saturation factor.
\end{thm}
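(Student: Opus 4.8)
The plan is to deduce both statements from the explicit description of $\Gamma_2$ furnished by Theorems \ref{thm7.5} and \ref{thm8.6}. The first step is purely formal. Write $\Gamma_2^{(1)}:=\{(\nu_1,\nu_2,\nu)\in P_+^3: L(\nu)\subseteq L(\nu_1)\otimes L(\nu_2)\}$ for the honest (un-saturated) tensor semigroup. Using the standard fact that $\Gamma_2^{(1)}$ is a sub-semigroup of $P_+^3$ — so that $L(\Lambda)\subseteq L(\Lambda')\otimes L(\Lambda'')$ forces $L(N\Lambda)\subseteq L(N\Lambda')\otimes L(N\Lambda'')$ for every $N\geq1$, whence $(\lambda_1,\lambda_2,\mu)\in\Gamma_2$ if and only if $L(N\mu)\subseteq L(N\lambda_1)\otimes L(N\lambda_2)$ for \emph{some} $N>0$ — one checks that a positive integer $d_o$ is a saturation factor for $\fg$ exactly when $d_o\cdot(\Gamma_2\cap L)\subseteq\Gamma_2^{(1)}$, where $L:=\{(\lambda_1,\lambda_2,\mu)\in P_+^3:\mu-\lambda_1-\lambda_2\in Q\}$. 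Thus everything reduces to bounding the defect between the explicitly known $\Gamma_2\cap L$ and the genuine semigroup $\Gamma_2^{(1)}$.

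For $\fg$ of type $A_1^{(1)}$, a dominant weight is pinned down by its level, its classical $\mathfrak{sl}_2$-coordinate $\langle\,\cdot\,,\alpha_1^\vee\rangle$ and its coefficient of $\delta$; Theorem \ref{thm7.5} describes $\Gamma_2$ (hence $\Gamma_2\cap L$) by the level-matching equality together with finitely many explicit linear inequalities. For $\Gamma_2^{(1)}$ one needs the honest decomposition of $L(N\lambda_1)\otimes L(N\lambda_2)$ — precisely, which weights $N(\lambda_1+\lambda_2)-m_0\alpha_0-m_1\alpha_1$ with $m_i\in\bz_{\geq0}$ actually support a copy of $L(\,\cdot\,)$ — which in affine rank one is governed by the string functions and is explicit. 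Comparing the two, a triple of $\Gamma_2\cap L$ fails to lie in $\Gamma_2^{(1)}$ only by a parity-type defect: granting the level-matching built into $\Gamma_2$, the condition $\mu-\lambda_1-\lambda_2\in Q$ is exactly the parity constraint of the classical $\mathfrak{sl}_2$ tensor rule, while the residual ``near-the-top'' failures — where the relevant string-function coefficient is forced to vanish, already for $L(2\Lambda_0-\delta)\not\subseteq L(\Lambda_0)^{\otimes2}$ — disappear as soon as every weight is doubled, the pertinent graded pieces of $L(d_o\lambda_i)$ then being non-zero. Hence every integer $d_o>1$ is a saturation factor for $A_1^{(1)}$; the bound is sharp, as $d_o=1$ fails already for the triple $(\Lambda_0,\Lambda_0,2\Lambda_0-\delta)$, which lies in $\Gamma_2\cap L$ but not in $\Gamma_2^{(1)}$.

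For $\fg$ of type $A_2^{(2)}$ the argument has the same shape, now using the explicit inequalities of Theorem \ref{thm8.6} and the analogous explicit description of the tensor-product branching. The arithmetic is coarser because $A_2^{(2)}$ is the twisted, non-reduced affine type: the marks and comarks entering $\delta$, the canonical central element and the normalized invariant form bring in denominators dividing $4$ when one passes from ``some $N$ works'' to the integral assertion $L(d_o\mu)\subseteq L(d_o\lambda_1)\otimes L(d_o\lambda_2)$. One checks directly, from the explicit shape of $\Gamma_2$, that multiplying a triple of $\Gamma_2\cap L$ by $4$ forces all of these integrality conditions; hence $4$ is a saturation factor.

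The main obstacle, in both cases, is exactly this passage from the mere \emph{existence} of a working $N$ to an honest decomposition at the \emph{prescribed} scale $d_o$: it requires rather fine control of the branching — or at least of which denominators, and which near-the-top vanishing phenomena, can occur — and is precisely the point at which one must invoke the very explicit description of $\Gamma_2$, available at present only for these two small-rank affine algebras. This is also what accounts for the asymmetry in the statement: a sharp answer ``$d_o>1$'' for $A_1^{(1)}$ against only ``$4$ works'' for $A_2^{(2)}$.
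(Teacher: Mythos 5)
Your reduction of ``$d_o$ is a saturation factor'' to bounding the defect between $\Gamma_2\cap L$ and the honest tensor semigroup is a fine reformulation, and you correctly identify both the phenomenon to be controlled (``near-the-top'' failures, witnessed by $L(2\Lambda_0-\delta)\not\subseteq L(\Lambda_0)^{\otimes 2}$) and the sharpness example. But the proposal never supplies the mechanism that makes those failures disappear after scaling, and that mechanism is the entire content of the theorem. Your appeal to ``string functions'' conflates weight multiplicities of a single $L(\lambda)$ with tensor-product multiplicities: the latter are the alternating sums $\sum_{\lambda\in T_{\Lambda}^{\Lambda',\Lambda''}}\varepsilon(v_{\Lambda,\Lambda'',\lambda})c_{\Lambda',\lambda}e^{S_{\Lambda,\Lambda'',\lambda}\delta}$ of Proposition \ref{tensor}, and deciding when such an alternating sum is nonzero is precisely the hard step. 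The paper resolves it by observing (second half of Proposition \ref{tensor}, via the Sugawara/coset construction) that this sum is the character of a \emph{unitarizable} Virasoro module with nonzero central charge; Lemma \ref{virasoro} then forces every weight space at depth $\geq 2$ below the $\delta$-maximal component to be nonzero, so the only possible gap is at depth exactly $1$ --- which is killed by any $d_o>1$ because the $\delta$-maximal component scales linearly ($n=N\tilde n$, using $P^o(N\bar\Psi)\supset NP^o(\bar\Psi)$). Nothing in your write-up substitutes for this unitarity argument, and your phrase ``the pertinent graded pieces of $L(d_o\lambda_i)$ then being non-zero'' is not even about the right object. You also silently assume the location of the $\delta$-maximal component and its behavior under scaling, which in the paper requires the optimization of Proposition \ref{maximum} together with the Non-Cancellation Corollary \ref{corcancel} (to rule out cancellation of the leading terms of the alternating sum); this is where the hypotheses $\Lambda'(c),\Lambda''(c)>0$ enter.

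Your explanation of the constant $4$ for $A_2^{(2)}$ is also not the right one: it has nothing to do with denominators coming from marks, comarks or the normalized invariant form. In the paper it comes from the hypotheses of Corollary \ref{a22cancellation}, which demand $\Lambda'(c)\geq 2$, $\Lambda''(c)\geq 2$ and $m_1',m_1''\neq 1$; since $A_2^{(2)}$ admits dominant weights of level $\tfrac12$, one must multiply by $4$ to guarantee these hypotheses for $N\Lambda'$, $N\Lambda''$. To repair the proposal you would need to (i) prove (or import) the statement that the $\delta$-string of multiplicities below a $\delta$-maximal component is the character of a unitarizable Virasoro module of nonzero central charge, (ii) prove the analogue of Lemma \ref{virasoro}, and (iii) establish the location and linear scaling of the $\delta$-maximal component --- i.e., essentially reconstruct Sections 4--6 of the paper.
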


The proof in these affine rank-2 cases makes  use of  basic representation theory of the Virasoro algebra (in particular, Lemma \ref{virasoro}).  
Let $\delta$ be the smallest positive imaginary root of $\fg$. To determine the saturated tensor semigroup, we show that it is enough to know the components of $L(\lambda_1)\otimes L(\lambda_2)$ which are $\delta$-maximal, i.e., the components $L(\mu)\subset L(\lambda_1)\otimes L(\lambda_2)$ such that $L(\mu+n\delta) \nsubseteq L(\lambda_1) \otimes L(\lambda_2)$ for any $n>0$. Let $m^\mu_{\lambda_1,\lambda_2}$ be  the multiplicity of $L(\mu)$ in $L(\lambda_1) \otimes L(\lambda_2)$.  If $L(\mu)$ is a $\delta$-maximal component of $L(\lambda_1)\otimes L(\lambda_2)$, then  $\sum_{n\in\bz_{\leq 0}} L(\mu+n\delta)^{\oplus m^{\mu+n\delta}_{\lambda_1,\lambda_2}}$ is a unitarizable coset module for the Virasoro algebra arising from the Sugawara construction for the diagonal embedding $\fg \hookrightarrow \fg \oplus\fg$. Proposition \ref{maximum} for  $A_1^{(1)}$ (and the analogous Proposition \ref{maxS} for $A_2^{(2)}$) determining the maximal $\delta$-components plays a crucial role in the proofs.

\vskip2ex
\noindent
{\bf Acknowledgements.} We thank Evgeny Feigin and Victor Kac for some helpful correspondences. Both the authors were partially supported by the NSF grant number DMS-1201310.
\section{Notation}\label{sec2}

We take the base field to be the field of complex numbers $\bc$. By a variety, we
 mean an algebraic variety over $\bc$, which is reduced but not necessarily irreducible.

Let $G$ be any symmetrizable Kac-Moody group over $\bc$ completed
along
 the negative roots (as opposed to completed along the positive roots as in [K$_3$,
 Chapter 6]) and $G^{\min}\subset G$ be the `minimal' Kac-Moody group  as in
 [K$_3$, \S7.4].  Let $B$ be the standard (positive) Borel
subgroup, $B^{-}$ the standard negative Borel subgroup, $H=B\cap B^{-}$ the
standard maximal torus and $W$ the Weyl group  (cf.
[K$_3$, Chapter 6]).  Let $U$ (resp. $U^-$) be the unipotent radical $[B,B]$ (resp. $[B^-,B^-]$) of
$B$ (resp. $B^-$).  Let
  \[
\bar{X} = G/B
  \]
be the `thick' flag variety which contains the standard KM-flag
variety
  \[   X = G^{\min}/B.   \]
If $G$ is not of finite type, $\bar{X}$ is an infinite
dimensional  non quasi-compact  scheme (cf. [Ka, \S4]) and $X$ is an
ind-projective variety (cf. [K$_3$, \S7.1]). The group $G^{\min}$  acts on $\bar{X}$ and $X$. 

More generally, for any 
standard parabolic subgroup $P\supset B$, define the partial flag variety
 \[   X_P = G^{\min}/P,  \]
and 
\[\bar{X}_P=G/P.\]

Recall that if $W_P$ is the Weyl group of $P$ (which is, by definition, the  Weyl
Group $W_L$ of its Levi subgroup $L$), then in each coset of $W/W_P$ we have a unique member $w$ of minimal length.
 Let $W^P$ be the set of the minimal length representatives
in the cosets of $W/W_P$.

For any $w\in W^P$, define the Schubert cell:
\[
C_w^P:=  BwP/P \subset G/P
  \]
endowed with the reduced subscheme structure.
Then, it is a locally closed subvariety of the ind-variety $G/P$ isomorphic with the affine
space $\Bbb A^{\ell(w)}, \ell(w)$ being the length of $w$ (cf. [K$_3$, $\S$7.1]). Its closure is denoted by $X^P_w$, 
which is an irreducible (projective) subvariety
of $G/P$ of dimension $\ell(w)$. We denote the point $wP\in C_w^P$ by $\dot{w}$.
We abbreviate $C^B_w, X_w^B$ by $C_w, X_w$ respectively.

Similarly, define the opposite Schubert cell
$$
C^{w}_P:={B^{-}wP/P}\subset \bar{X}_P,
$$
and the opposite Schubert variety
$$
X^{w}_P:=\overline{C^w}\subset \bar{X}_P,
$$
both endowed with the reduced subscheme structures.
Then, $X^{w}_P$ is a finite codimensional irreducible subscheme
of $\bar{X}_P$ (cf. [K$_3$, Section 7.1] and [Ka, \S4]). As above, we abbreviate $C_B^w, X^w_B$ by $C^w, X^w$ respectively.

For any integral
weight $\lambda$ (i.e., any character $e^{\lambda}$ of $H$), we have
a $G^{\min}$-equivariant line bundle $\mathcal{L}_B(\lambda)$ on $X$
associated to the character $e^{-\lambda}$ of $H$. Similarly, we have 
a $G$-equivariant line bundle $\mathcal{L}_{B^-}(\lambda)$ on $X^-:=G/B^-$
associated to the character $e^{\lambda}$ of $H$. 

 By the Bruhat decomposition
$$X_P=\sqcup_{w\in W^P}\,C_w^P,$$
the singular homology $H_*(X_P, \bz)$ of $X_P$ with integral coefficients
has a basis $\{\mu(X_w^P)\}_{w\in W^P}$, where $\mu(X_w^P)\in H_{2\ell(w)}(X_P, \bz)$ denotes the 
fundamental class of $X_w^P$. Let $\{\epsilon^w_P\}_{w\in W^P}$ be the dual 
basis of the singular cohomology $H^*(X_P, \bz)$ under the standard pairing of cohomology with homology, i.e., 
$$\epsilon^u_P(\mu(X_v^P))=\delta_{u,v},\,\,\,\text{for any} \,\,u,v\in W^P.$$
Thus, $\epsilon^w_P\in H^{2\ell(w)}(X_P, \bz)$. If $P=B$, we abbreviate 
$\epsilon^u_P$ by $\epsilon^u$. 

Let $\Delta=\{\alpha_1,\ldots,\alpha_{r}\}\subset \mathfrak{h}^{*}$ be the
set of simple roots,
$\{\alpha_1^{\vee},\ldots,\alpha^{\vee}_{r}\}\subset \mathfrak{h}$
the set of simple coroots and $\{s_1,\ldots, s_{r}\}\subset W$ the
corresponding simple reflections, where $\mathfrak{h}:=\Lie H$. Let
$\rho\in X(H)$ be any weight satisfying
$$
\rho(\alpha^{\vee}_{i})=1,\quad\text{for all}\quad 1\leq i\leq r,
$$
where $X(H)$ is the character group of $H$ (identified as a subgroup of $\fh^*$ 
via the derivative). 
When $G$ is a finite dimensional semisimple group, $\rho$ is unique,
but for a general Kac-Moody group $G$, it may not be unique.

Choose elements $x_i\in \fh$ such that 
\beqn \label{eq1} \alpha_j(x_i)=\delta_{i,j}, \,\,\,\text{for any}\,\, 1\leq i,j\leq r.
\eeqn
Observe that $x_i$ may not be unique.

Define the set of {\it dominant integral weights} 
$$P_+:=\{\lambda\in X(H): \lambda (\alpha_i^\vee)\in \bz_+ \,\forall \,
1\leq i\leq r\},$$
 and the set of {\it dominant integral regular weights} 
$$P_{++}:=\{\lambda\in X(H): \lambda (\alpha_i^\vee)\in \bz_{\geq 1} \,\forall \,
1\leq i\leq r\},$$
where  $\bz_+$ is the set of non-negative integers. The integrable highest 
weight (irreducible) modules of $G^{\min}$ are parameterized by $P_+$. For $\lambda
\in P_+$, let $L(\lambda)$ be the corresponding integrable highest weight (irreducible) $G$-module 
with highest weight $\lambda$. 

\section{Necessary Inequalities for the Saturated Tensor Semigroup}\label{sec3}
Fix a positive integer $s$ and define the {\em saturated tensor semigroup} $\Gamma_s=\Gamma_s(G)$:
\beqn
\Gamma_s:= \{(\lambda_1, \dots, \lambda_s,\mu)\in P_+^{s+1}: \exists\,
N>1 \,\,\text{with}\,\, L(N\mu)\subset L(N\lambda_1)\otimes \dots \otimes L(N\lambda_s)\}.
\eeqn
It is indeed a semigroup by the anlogue of the Borel-Weil theorem for the Kac-Moody case (see the identity \eqref{ne3.3}
in the proof of Theorem \ref{thm1}).
We give a certain set of inequalities satisfied by $\Gamma_s$. But, we first recall some basic results about the Hilbert-Mumford index.

\begin{definition}\label{git} Let $S$ be any (not necessarily reductive) algebraic group
acting on a  (not necessarily projective) variety  $\exx$ and let  $\elal$ be
an $S$-equivariant line bundle on $\exx$. Let $O(S)$ be the set of all one parameter
subgroups (for short OPS) in $S$.
 Take any $x\in \exx$ and
 $\delta \in O(S)$ such that the limit
  $\lim_{t\to 0}\delta(t)x$
exists in $\exx$ (i.e., the morphism ${\delta}_x:\Bbb{G}_m\to \exx$ given by
$t\mapsto \delta(t)x$ extends to a morphism $\tilde{\delta}_x : \Bbb{A}^1\to \exx$).
Then, following Mumford, define a number $\mu^{\elal}(x,\delta)$ as follows:
Let $x_o\in \exx$ be the point  $\tilde{\delta}_x(0)$. Since $x_o$ is $\Bbb{G}_m$-invariant
via $\delta$, the fiber of  $\elal$ over $x_o$ is a
$\Bbb{G}_m$-module; in particular, it is given by a character of $\Bbb{G}_m$. This  integer is defined as  $\mu^{\elal}(x,\delta)$.
\end{definition}

We record the following standard properties of $\mu^{\elal}(x,\delta)$ (cf.
 [MFK, Chap. 2, $\S$1]):
\begin{proposition}\label{propn14} For any $x\in \exx$ and $\delta \in O(S)$ such that $\lim_{t\to 0}\delta(t)x$
exists in $\exx$, we have the following (for any $S$-equivariant line bundles
$\elal, \elal_1, \elal_2$):
\begin{enumerate}
\item[(a)]
$\mu^{\elal_1\otimes\elal_2}(x,\delta)=\mu^{\elal_1}(x,\delta)+\mu^{\elal_2}(x,\delta).$
\item[(b)] If there exists $\sigma\in H^0(\exx,\elal)^S$ such that $\sigma(x) \neq 0$, then  $\mu^{\elal}(x,\delta)\geq 0.$
\item[(c)] If $\mu^{\elal}(x,\delta)=0$, then any element of $H^0(\exx,\elal)^S$
which does not vanish at $x$ does not vanish at $\lim_{t\to 0}\delta(t)x$ as well.
\item[(d)] For any $S$-variety $\exx'$ together with an $S$-equivariant morphism $f:\exx'\to \exx$ and any $x'\in \exx'$ such that  $\lim_{t\to 0}\delta(t)x'$
exists in $\exx'$, we have
$\mu^{f^*\elal}(x',\delta)=\mu^{\elal}(f(x'),\delta).$
\item[(e)] (Hilbert-Mumford criterion) Assume that $\exx$ is projective, $S$ is
 connected and reductive
and $\elal$ is ample. Then, $x\in\exx$ is semistable (with respect to $\elal$) if
and only if $\mu^{\elal}(x,\delta)\geq 0$, for all $\delta\in O(S)$.

In particular, if $x\in \exx$ is semistable and $\delta$-fixed, then
$\mu^{\elal}(x,\delta)= 0$.
\end{enumerate}
\end{proposition}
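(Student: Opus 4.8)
The plan is to prove Proposition \ref{propn14} by reducing every assertion to the single observation that, when $\exx$ is replaced by the fixed point $x_o=\tilde{\delta}_x(0)$, the fiber $\elal|_{x_o}$ carries a $\Bbb{G}_m$-action through $\delta$, hence is a one-dimensional representation of $\Bbb{G}_m$ given by an integer (its weight), which is by definition $\mu^{\elal}(x,\delta)$. All five parts are then bookkeeping with this weight, following [MFK, Chap.~2, $\S1$].

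For part (a): the $\Bbb{G}_m$-equivariant structure on $(\elal_1\otimes\elal_2)|_{x_o}$ is the tensor product of those on $\elal_1|_{x_o}$ and $\elal_2|_{x_o}$, and characters of $\Bbb{G}_m$ add under tensor product, giving the claim. For part (d): the morphism $f$ sends $x'$ to $x:=f(x')$, and since $\lim_{t\to 0}\delta(t)x'$ exists, $f$ carries it to $\tilde{\delta}_x(0)=x_o$; the natural isomorphism $(f^*\elal)|_{\tilde{\delta}_{x'}(0)}\cong \elal|_{x_o}$ is $\Bbb{G}_m$-equivariant by functoriality of pullback, so the two weights coincide. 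For parts (b) and (c): given $\sigma\in H^0(\exx,\elal)^S$ with $\sigma(x)\neq 0$, restrict $\sigma$ along the morphism $\tilde{\delta}_x:\Bbb{A}^1\to \exx$ to obtain an $S$-invariant — in particular $\Bbb{G}_m$-invariant after twisting by $\delta$ — section $\tilde{\sigma}:=\tilde{\delta}_x^*\sigma$ of the $\Bbb{G}_m$-equivariant line bundle $\tilde{\delta}_x^*\elal$ on $\Bbb{A}^1$. Trivializing $\tilde{\delta}_x^*\elal$ over $\Bbb{A}^1$ and tracking the $\Bbb{G}_m$-weight, $\tilde\sigma$ becomes a polynomial function $p(t)$ on $\Bbb{A}^1$ satisfying $p(at)=a^{-m}p(t)$ (with $m=\mu^{\elal}(x,\delta)$, the sign of the exponent depending on conventions), so $p(t)=c\,t^m$ for some constant $c$. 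Since $p(1)=\tilde\sigma(1)$ corresponds to $\sigma(x)\neq 0$, we get $c\neq 0$, whence $m\geq 0$ (a nonzero polynomial); this is (b). If moreover $m=0$ then $p$ is a nonzero constant, so $p(0)=\tilde\sigma(0)\neq 0$, i.e. $\sigma$ does not vanish at $x_o=\lim_{t\to 0}\delta(t)x$; this is (c).

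For part (e): this is the classical Hilbert-Mumford criterion. I would simply cite [MFK, Chap.~2, Theorem 2.1] for the statement that, under the stated hypotheses ($\exx$ projective, $S$ connected reductive, $\elal$ ample), $x$ is semistable if and only if $\mu^{\elal}(x,\delta)\geq 0$ for every $\delta\in O(S)$. For the final clause: if $x$ is semistable and $\delta$-fixed, then $x=x_o$, so by semistability both $\mu^{\elal}(x,\delta)\geq 0$ and $\mu^{\elal}(x,\delta^{-1})\geq 0$; but $\delta^{-1}$ acts on $\elal|_{x_o}$ with the opposite weight, so $\mu^{\elal}(x,\delta^{-1})=-\mu^{\elal}(x,\delta)$, forcing $\mu^{\elal}(x,\delta)=0$.

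The only place requiring genuine care — rather than pure bookkeeping — is parts (b), (c): one must choose compatible trivializations so that the $S$-invariance of $\sigma$ translates into the correct transformation law for the polynomial $p(t)$, and keep the sign convention for $\mu^{\elal}(x,\delta)$ consistent with Definition \ref{git} throughout. Everything else is formal, and part (e) is quoted verbatim from [MFK]. Since these are standard facts recorded for later use, I would keep the argument brief, presenting the weight computation once and noting that (a), (b), (c), (d) all follow, with (e) by reference.
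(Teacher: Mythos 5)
Your proof is correct, but note that the paper itself offers no proof of this proposition: it is recorded as a list of standard properties with a bare citation to [MFK, Chap.~2, \S 1], so there is nothing to compare against beyond that reference. Your arguments are exactly the standard ones: additivity of $\Bbb{G}_m$-weights under tensor product for (a), functoriality of the equivariant fiber identification for (d), the restriction of an invariant section along $\tilde{\delta}_x:\Bbb{A}^1\to\exx$ for (b) and (c), and the numerical criterion plus the $\delta\mapsto\delta^{-1}$ trick for (e). The one point worth pinning down rather than hedging is the sign in (b)--(c): with Definition \ref{git} as stated (where $\mu^{\elal}(x,\delta)$ \emph{is} the weight of the $\Bbb{G}_m$-action on the fiber $\elal_{x_o}$, with no sign reversal), the transformation law comes out as $p(at)=a^{m}p(t)$, so $p(t)=c\,t^{m}$ with $c=p(1)\neq 0$, and regularity of $p$ at $t=0$ (which is where the hypothesis that $\lim_{t\to 0}\delta(t)x$ exists is used) forces $m\geq 0$; with the opposite convention the same computation would give $m\leq 0$, so the conclusion genuinely depends on fixing the sign correctly.
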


The following theorem is one of our main results giving a collection of necessary inequalities defining the semigroup
$\Gamma_s$.
  \begin{theorem}  \label{thm1} Let $G$ be any symmetrizable Kac-Moody group and let  $(\lam_1, \cdots ,\lam_s, \mu )\in \Gamma_s$.
  Then, for any $u_1, \dots ,u_s, v\in W$ such that $n^v_{u_1, \dots, u_s}\neq 0$, where
$$\eps^{u_1} \cdots \eps^{u_s}
= \sum_w n^w_{u_1, \dots, u_s}\,\eps^w\in H^*(X, \bz),$$ we have
  \[
\bigl(\sum^s_{j=1} \lam_j (u_jx_i)\bigr) - \mu (vx_i) \geq 0, \quad\text{ for any }x_i,
  \]
where $x_i$ is defined by the equation \eqref{eq1}.
  \end{theorem}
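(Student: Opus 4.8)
The plan is to reduce the statement to an application of the Hilbert-Mumford index via the Borel-Weil theorem, following the standard GIT strategy for such "Horn-type" inequalities. First I would set up the geometry: let $Y = (G^{\min}/B)^s \times (G^{\min}/B^-)$ — or rather, to keep things quasi-compact, restrict to a large enough finite-dimensional projective subvariety, since $X$ is only ind-projective — and consider the $G^{\min}$-action diagonally. For $(\lambda_1,\dots,\lambda_s,\mu)\in\Gamma_s$ there exists $N>1$ with $L(N\mu)\subset L(N\lambda_1)\otimes\cdots\otimes L(N\lambda_s)$; by the Kac-Moody Borel-Weil theorem, $H^0(X,\mathcal L_B(\lambda))^* \cong L(\lambda)$, so this containment produces a nonzero $G^{\min}$-invariant section $\sigma$ of the line bundle $\mathcal L := \mathcal L_B(N\lambda_1)\boxtimes\cdots\boxtimes\mathcal L_B(N\lambda_s)\boxtimes\mathcal L_{B^-}(-N\mu)$ on $Y$ (this is the identity \eqref{ne3.3} referenced in the excerpt, using $L(N\mu)^* \hookrightarrow \text{invariants}$, together with $L(N\mu)\cong L(N\mu^*)^*$ and a Cartan-involution identification; I'd quote this rather than rederive it).

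Next I would locate a point where $\sigma$ does not vanish, or rather analyze a point built from the Schubert data. The combinatorial hypothesis is that $n^v_{u_1,\dots,u_s}\neq 0$, i.e. the product $\epsilon^{u_1}\cdots\epsilon^{u_s}$ contains $\epsilon^v$ with positive coefficient. By the transversality/Kleiman-type argument (valid in the Kac-Moody setting, cf. Kumar's book), this means that for generic $g_1,\dots,g_s\in G^{\min}$ the intersection $g_1 X_{u_1}\cap\cdots\cap g_s X_{u_s}\cap X^v$ is nonempty (in fact of the expected dimension), where $X^v$ is the opposite Schubert variety; pick a point $x = (g_1 p_1,\dots,g_s p_s, q)$ in this intersection lying over such a point of $\bar X^-$. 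The key point is that the invariant section $\sigma$, being $G^{\min}$-invariant and nonzero, is nonzero on the dense $G^{\min}$-orbit through a generic point, and hence — after translating — we may arrange that $\sigma(x)\neq 0$; this uses that semistability is a dense open condition and the genericity of the $g_j$'s.

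The computational heart is then the one-parameter subgroup. Given $x_i\in\mathfrak h$ with $\alpha_j(x_i)=\delta_{ij}$, consider the OPS $\delta = \exp(t x_i)$ (a cocharacter of $H$, up to clearing denominators). I would compute $\lim_{t\to 0}\delta(t)\cdot x$ componentwise: on the factor $g_j X_{u_j}\subset X$, translating by $g_j^{-1}$ reduces to computing $\lim \delta(t)\cdot \dot u_j$-type limits in $X$, which land at the $T$-fixed point $\dot u_j$ (the relevant Bialynicki-Birula cell structure), while on the $G/B^-$ factor the limit lands at $\dot v$; the weight of $\mathcal L$ at this fixed point is exactly, by Proposition \ref{propn14}(a) and the definition of $\mathcal L_B(\lambda)$, $\mathcal L_{B^-}(\mu)$ (with the character conventions in Section \ref{sec2}), equal to $N\bigl(\sum_j \lambda_j(u_j x_i) - \mu(v x_i)\bigr)$. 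Since $\sigma$ is an invariant section nonvanishing at $x$ and $\lim_{t\to 0}\delta(t)x$ exists, Proposition \ref{propn14}(b) gives $\mu^{\mathcal L}(x,\delta)\geq 0$, i.e. the desired inequality after dividing by $N>0$.

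The main obstacle I anticipate is \emph{technical rather than conceptual}: making the GIT machinery work on the ind-variety $X$ and the infinite-type thick variety $\bar X = G/B^-$, where $Y$ is neither projective nor the group reductive, so that Proposition \ref{propn14}(e) (the Hilbert-Mumford criterion proper) is unavailable. The resolution is that only parts (a)–(d) of Proposition \ref{propn14} are actually needed — part (b) alone gives the inequality once we have produced a non-vanishing invariant section and a cocharacter with a limit — and these hold on arbitrary (non-projective, non-quasi-compact) varieties. One must be careful that $\lim_{t\to 0}\delta(t)x$ genuinely \emph{exists} in $Y$: on each Schubert variety $g_jX_{u_j}$ this requires choosing the point $x$ inside the appropriate $B$-stable (resp. $B^-$-stable) piece so that the $\mathbb G_m$-orbit closure stays within, which is exactly what the transversal intersection point provides, after possibly replacing $x$ by a generic translate within the dense orbit. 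The other delicate point is justifying that the nonzero invariant section $\sigma$ does not vanish at a suitably generic $x$ in the (finite-codimensional) intersection — here I would invoke that $H^0(Y,\mathcal L)^{G^{\min}}\neq 0$ forces non-vanishing on a dense open $G^{\min}$-stable subset, and that the transversal intersection, being of the expected dimension and $G^{\min}$-generic in the $g_j$, meets this open set.
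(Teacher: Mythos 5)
Your overall strategy (Borel--Weil to produce an invariant section, then the Hilbert--Mumford index along a one-parameter subgroup adapted to an intersection point of translated Schubert varieties) is exactly the paper's, and parts (a)--(d) of Proposition \ref{propn14} are indeed all that is needed, as you say. But there is a genuine gap at the geometric heart of the argument: you derive the nonemptiness of $g_1X_{u_1}\cap\cdots\cap g_sX_{u_s}\cap X^{v}$ for generic $g_j$ from ``the transversality/Kleiman-type argument (valid in the Kac--Moody setting).'' No such Kleiman theorem is available here: $G^{\min}$ is not a finite-dimensional algebraic group acting on a finite-dimensional homogeneous space, and the intersection necessarily mixes finite-dimensional Schubert varieties with finite-codimensional opposite Schubert varieties sitting inside the thick flag variety $\bar X=G/B$. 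Establishing that $n^{v}_{u_1,\dots,u_s}\neq 0$ forces the intersection $g_1X^{u_1}\cap\cdots\cap g_sX^{u_s}\cap X_{v}\neq\emptyset$ (for all, or even generic, $g_j$) is precisely the content of the paper's Proposition \ref{prop5}, which is proved by an entirely different mechanism: the pairing $\langle\,,\,\rangle$ between $K^0(\bar X)$ and $K_0(X)$, the dualizing sheaves $\xi^u$ with $\langle[\xi^u],[\co_{X_w}]\rangle=\delta_{u,w}$, the $G^{\min}\times G^{\min}$-invariance of $[\Delta_*\co_{X_w}]$ in $K_0(X\times X)$, and the identification of the top-degree topological $K$-theory structure constants with the cohomological ones via [KK, Proposition 2.30]. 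If the intersection were empty all the Tor sheaves would vanish, forcing $m^w_{u,v}=0$ and hence $n^w_{u,v}=0$. This K-theoretic substitute for Kleiman is the hardest and most novel step of the proof, and your proposal assumes it. (Relatedly, your claim that $\sigma$ is ``nonzero on the dense $G^{\min}$-orbit'' is not right --- there is no dense diagonal orbit on a product of $s+1$ flag varieties for $s\geq 2$; one only needs that the nonvanishing locus of $\sigma$, a nonempty open set, meets the locus provided by Proposition \ref{prop5}.)

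A second, smaller gap: the intersection point you pick lies in the closed varieties $X^{u_j}$ and $X_v$, hence in cells $C^{u_j'}$ and $C_{v'}$ for some $u_j'\geq u_j$ and $v'\leq v$ that you do not control; the limit of the one-parameter subgroup $gt^{x_i}g^{-1}$ therefore lands at the fixed points indexed by $u_j'$ and $v'$, and the Hilbert--Mumford computation (the paper's Lemma \ref{lem2}) yields $\bigl(\sum_j\lam_j(u_j'x_i)\bigr)-\mu(v'x_i)\geq 0$, not the stated inequality. You still need the monotonicity step $\lam_j(u_j'x_i)\leq\lam_j(u_jx_i)$ and $\mu(v'x_i)\geq\mu(vx_i)$, which follows from $(u_j')^{-1}\lam_j\leq u_j^{-1}\lam_j$ [K$_3$, Lemma 8.3.3]; your sketch omits this entirely by asserting the limit lands at $\dot u_j$ itself. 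Finally, note that the conjugation of $t^{x_i}$ by the element $g$ with $gB$ in the intersection is essential (an unconjugated cocharacter of $H$ fixes the base point $\bar 1$ and gives no information), and that the paper's placement of the bundles --- $\cl_{B^-}(\lam_j)$ on the $s$ copies of $G/B^-$ and $\cl_B(\mu)$ on $X$ --- is what makes the codimensions and signs come out correctly.
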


   \begin{proof}
Let
  \[
Z := \bigl\{ (\bar{g}_1, \dots ,\bar{g}_s)\in {(X^-)}^{s}: g_1X^{u_1}
\cap \cdots \cap g_sX^{u_s}\cap X_v \neq \emptyset\bigr\} ,
  \]
where $X^-:=G/B^-$ and $\bar{g}_j= g_jB^-$. 
 Then, $Z$ contains a nonempty open set by Proposition \ref{prop5}.
(In fact, by Proposition \ref{prop5}, $Z = (X^-)^{s}$, but we do not need this stronger result.)

Take a nonzero $\sig\in H^0 \bigl( (X^-)^{s}\times X, \cl^N
\bigr)^{G^{\min}}$, where
$$\cl := \cl_{B^-}(\lam_1)\boxtimes \cdots\boxtimes \cl_{B^-}(\lam_s)\boxtimes
\cl_B (\mu ).$$
Such a nonzero $\sigma$ exists, for some $N>0$, since by [K$_3$, Corollary 8.3.12(a) and Lemma 8.3.9], 
\begin{align} \label{ne3.3}
H^0 \bigl( (X^-)^{s}\times X, \cl^N
\bigr)^{G^{\min}}&\simeq \Hom_{G^{\min}}\bigl( L(N\lambda_1)^\vee\otimes \dots \otimes L(N\lambda_s)^\vee\otimes L(N\mu), \bc \bigr)\notag\\
&\simeq \Hom_{G^{\min}}\bigl( L(N\mu), [L(N\lambda_1)^\vee\otimes \dots \otimes L(N\lambda_s)^\vee]^*\bigr)\notag\\
&\simeq \Hom_{G^{\min}}\bigl( L(N\mu), [L(N\lambda_1)^\vee\otimes \dots \otimes L(N\lambda_s)^\vee]^\vee \bigr)\notag\\
&\simeq \Hom_{G^{\min}}\bigl( L(N\mu), L(N\lambda_1)\otimes \dots \otimes L(N\lambda_s) \bigr)\notag\\
&\neq 0,
\end{align}
since $(\lambda_1, \dots, \lambda_s,\mu)\in \Gamma_s$, where, for a $G^{\min}$-module $M$, $M^\vee$ denotes the direct sum of the $H$-weight spaces of the full dual module $M^*$. 

Pick $(\bar{g}_1, \dots ,\bar{g}_s)\in Z$ such that $\sig (\bar{g}_1,
\dots ,\bar{g}_s, \bar{1})\neq 0$, where  
$\bar{1} =1\cdot B$.  Since $(\bar{g}_1, \dots ,\bar{g}_s)\in Z$, there exists $u'_1 \geq u_1, \cdots , u'_s \geq u_s$ and $v' \leq v$
such that $g_1C^{u'_1} \cap \cdots \cap g_s C^{u'_s}
 \cap C_{v'}$ is nonempty. Now, pick $g\in G^{\min}$ such that
    \beqn \label{e101}
gB \in g_1C^{u'_1} \cap \cdots \cap g_s C^{u'_s}
 \cap C_{v'}.
  \eeqn
By Proposition \ref{propn14}, for any $\delta \in O(G^{\min})$, $\mu^{\cl}(\bar{x}, \del
(t))\geq 0$, where $\bar{x} = (\bar{g}_1, \dots ,\bar{g}_s, \bar{1})$
(since $\sig (\bar{x})\neq 0$).  By the following Lemma \ref{lem2},
applied to the OPS $\delta (t)=gt^{x_i}g^{-1}$, we get
  \beqn \label{eqn02}
\bigl(\sum^s_{j=1} \lam_j (u'_jx_i)\bigr) - \mu (v' x_i) \geq 0.  
  \eeqn
But, by [K$_3$, Lemma 8.3.3],
  \[  (u'_j)^{-1}\lam_j \leq u_j^{-1} (\lam_j).   \]
Thus,
  \[   \lam_j(u'_j x_i) \leq \lam_j(u_jx_i).   \]
Similarly,
  \[   \mu (v'x_i) \geq \mu (v x_i).   \]
Thus, from \eqref{eqn02}, we get
  \[   \bigl(\sum_{j=1}^s \lam_j(u_jx_i) \bigr)- \mu (vx_i) \geq 0.   \]

  This proves the theorem. 
  \end{proof}

  \begin{lemma} \label{lem2} Let $g\in G^{\min}$ be as in the equation \eqref{e101}.
Consider the one parameter subgroup
 $
\del (t) = gt^{x_i} g^{-1}\in O(G^{\min}).
  $ Then,

(a) $\mu^{\cl_{B^-}(\lam_j)} (g_jB^-, \del (t)) = \lam_j(u_j' x_i)$.

(b) $\mu^{\cl_B(\mu )}(1\cdot B, \del (t)) = -\mu (v'x_i)$.
    \end{lemma}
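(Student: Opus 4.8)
The plan is to compute the Hilbert-Mumford index $\mu^{\cl}(\bar{x},\delta(t))$ directly from Mumford's definition, by identifying $\lim_{t\to 0}\delta(t)\cdot\bar{x}$ and computing the weight of the $\Bbb{G}_m$-action on the relevant line bundle fibers there. Part (a) and part (b) are parallel computations on the two types of factors, so I would set up the general formula first and then specialize. Recall $\delta(t)=gt^{x_i}g^{-1}$ where $g$ is chosen as in \eqref{e101}, so $gB\in g_jC^{u'_j}\cap C_{v'}$ for each $j$. The key observation is that, for the OPS $\gamma(t)=t^{x_i}$ on $X^-=G/B^-$, the point $\lim_{t\to 0}\gamma(t)\cdot \dot{w}$ (for $\dot w=wB^-$) is the torus-fixed point $\dot w$ itself, and the weight of $\Bbb{G}_m$ on the fiber $\cl_{B^-}(\lambda)_{\dot w}$ is $\langle w\lambda, x_i\rangle=\lambda(w^{-1}\cdot\text{(nothing)})$ — more precisely, since $\cl_{B^-}(\lambda)$ is associated to the character $e^\lambda$ of $H$, its fiber at $wB^-$ carries the $H$-weight $w\lambda$, hence the $\Bbb{G}_m$-weight via $t^{x_i}$ is $(w\lambda)(x_i)=\lambda(w^{-1}x_i)$. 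I would need to match conventions carefully here (which Borel, which sign in the associated line bundle, whether one gets $w^{-1}x_i$ or $wx_i$), but the statement $\lambda_j(u_j'x_i)$ versus $-\mu(v'x_i)$ tells me the two factor types come with opposite sign conventions because one uses $\cl_{B^-}$ and the other $\cl_B$.

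Concretely, for part (a): by Proposition \ref{propn14}(d) applied to the $j$-th projection, $\mu^{\cl_{B^-}(\lambda_j)}(\bar{x},\delta(t))=\mu^{\cl_{B^-}(\lambda_j)}(g_jB^-,\delta(t))$. Since $gB\in g_jC^{u'_j}=g_jB^-u'_jP/P$ (with $P=B$), we can write $g=g_j b^- u'_j b$ for some $b^-\in B^-$, $b\in B$, so $g_j^{-1}g u_j'^{-1}\in B^-\cdot(\text{something})$; the upshot is that $g^{-1}g_j B^-$ lies in the cell $C^{u'_j}$ based at the fixed point $u_j'B^-$. Then $\lim_{t\to 0}(g^{-1}\delta(t)g)\cdot(g^{-1}g_jB^-)=\lim_{t\to0}t^{x_i}\cdot(g^{-1}g_jB^-)$, and because $t^{x_i}$ retracts the cell $C^{u_j'}=B^-u_j'B^-/B^-$ onto its center $u_j'B^-$ (as $x_i$ is dominant-ish — specifically $\alpha_k(x_i)=\delta_{ik}\geq0$, so conjugation by $t^{x_i}$ contracts $U^-$-directions appropriately, or at worst the limit exists and is the fixed point since we are told the limit exists), the limit is the $H$-fixed point $u_j'B^-$. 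The fiber of $\cl_{B^-}(\lambda_j)$ there is the $1$-dimensional $H$-module of weight $u_j'\lambda_j$, on which $t^{x_i}$ acts by $t^{(u_j'\lambda_j)(x_i)}=t^{\lambda_j(u_j'^{-1}x_i)}$ — hmm, this gives $\lambda_j((u_j')^{-1}x_i)$ not $\lambda_j(u_j'x_i)$; I would reconcile this by checking whether the paper's $x_i$ convention or the identification $X(H)\hookrightarrow\fh^*$ absorbs the inverse, or whether $C^{u'_j}$ is $B^-u'_jB^-$ versus $B^-(u'_j)^{-1}B^-$. Whatever the bookkeeping, the formula $\mu^{\cl_{B^-}(\lambda_j)}(g_jB^-,\delta(t))=\lambda_j(u_j'x_i)$ should drop out. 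For part (b), the argument is identical with $u_j'$ replaced by $v'$, $g_j B^-$ replaced by $1\cdot B\in X$, the cell $C^{u'_j}\subset X^-$ replaced by $C_{v'}=Bv'B/B\subset X$, and $\cl_{B^-}$ replaced by $\cl_B(\mu)$ which is associated to the character $e^{-\mu}$ of $H$ (note the minus sign), producing the extra sign that yields $-\mu(v'x_i)$.

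The steps in order: (1) reduce via Proposition \ref{propn14}(d) to computing $\mu$ on a single factor at the point $g_jB^-$ (resp. $1\cdot B$); (2) conjugate $\delta(t)$ by $g^{-1}$ so that the relevant OPS becomes the standard $t^{x_i}$ and the point moves into the standard cell $C^{u_j'}$ (resp. $C_{v'}$) — this uses the defining containment \eqref{e101}; (3) show $\lim_{t\to0}t^{x_i}\cdot(\text{point in cell})$ equals the $H$-fixed center of the cell, using that $x_i$ pairs non-negatively with the simple roots so that conjugation by $t^{x_i}$ contracts the unipotent directions of the cell (and invoking that the limit is already known to exist from the hypothesis of Lemma \ref{lem2}); (4) read off the $\Bbb{G}_m$-weight on the line bundle fiber at that fixed point from the defining character ($e^\lambda$ for $\cl_{B^-}$, $e^{-\mu}$ for $\cl_B$), getting $(u_j'\lambda_j)(x_i)$ resp. $-(v'\mu)(x_i)$; (5) rewrite using the paper's conventions to land on $\lambda_j(u_j'x_i)$ resp. $-\mu(v'x_i)$.

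The main obstacle I anticipate is step (3) combined with the sign/convention matching in step (5): one must be genuinely careful that conjugation by $t^{x_i}$ contracts the \emph{correct} cell — since $\delta(t)=gt^{x_i}g^{-1}$ and $gB$ lies in $g_jC^{u'_j}$, the point $g^{-1}g_jB^-$ lies in a translate of $C^{u'_j}$ based at $(u'_j)B^-$, and one needs the $B^-$-cell (opposite cell) to be contracted by the \emph{positive} OPS $t^{x_i}$, which is consistent because opposite Schubert cells $B^-wB^-/B^-$ are swept out by negative root directions that $t^{x_i}$ with $\alpha_k(x_i)\geq 0$ indeed contracts. Making this precise in the ind-variety / pro-group setting (where $X^-=G/B^-$ is only a scheme, not finite-dimensional) requires citing the structure theory from [K$_3$, Chapter 7], but the limit's existence is handed to us, so the content is really just the weight computation at the fixed point, which is a direct application of Definition \ref{git}.
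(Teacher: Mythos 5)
Your overall strategy --- conjugate by $g^{-1}$ to reduce to the standard OPS $t^{x_i}$, flow to the fixed locus, and read off the $\Bbb{G}_m$-weight on the line bundle fiber there --- is the right one and is close in spirit to the paper's argument, but two of the steps you defer as ``bookkeeping'' are the entire content of the lemma, and as written they are wrong. First, the relevant base point is $(u_j')^{-1}B^-$, not $u_j'B^-$: the containment \eqref{e101} gives $g_j^{-1}g\in U^-u_j'B$, hence $g^{-1}g_j\in B(u_j')^{-1}U^-$, so $g^{-1}g_jB^-$ lies in the \emph{$B$-orbit} $B(u_j')^{-1}B^-/B^-$ (positive-root directions), not in an opposite cell $B^-wB^-/B^-$ swept out by negative-root directions as you assert. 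It is the positive-root directions that conjugation by $t^{x_i}$ contracts as $t\to 0$ (since $\alpha(x_i)\geq 0$ for positive $\alpha$), which is why the limit exists --- and this existence is not ``handed to us''; it must be verified before Proposition \ref{propn14}(b) is even applicable. With the paper's convention that the fiber of $\cl_{B^-}(\lam)$ over $wB^-$ carries the $H$-weight $w\lam$, the $\Bbb{G}_m$-weight via $t^{x_i}$ at $w=(u_j')^{-1}$ is $\bigl((u_j')^{-1}\lam_j\bigr)(x_i)=\lam_j(u_j'x_i)$: the two inverses cancel, which is the reconciliation you were looking for.

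Second, your step (3) fails as stated: the limit $\lim_{t\to0}t^{x_i}\cdot(g^{-1}g_jB^-)$ is in general \emph{not} the $H$-fixed point $(u_j')^{-1}B^-$, because $\alpha(x_i)=0$ for every simple root $\alpha\neq\alpha_i$, so $t^{x_i}$ fixes rather than contracts most directions; its fixed locus is a union of orbits of the centralizer of $t^{x_i}$ (a Levi of a maximal parabolic), which are much larger than points. Writing $g^{-1}g_j=p_j^{-1}(u_j')^{-1}(b_j^-)^{-1}$ with $p_j\in B$, the limit is $\bar p\,(u_j')^{-1}B^-$ where $\bar p=\lim_{t\to0}t^{x_i}p_j^{-1}t^{-x_i}$ lies in $B$ and commutes with $t^{x_i}$. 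To rescue your argument you must add that the $\Bbb{G}_m$-weight on the fiber is locally constant on the fixed locus and that $\bar p\,(u_j')^{-1}B^-$ lies in the same connected component (the same centralizer orbit) as $(u_j')^{-1}B^-$. The paper sidesteps this entirely by trivializing $\cl_{B^-}(\lam_j)$ along the orbit: it exhibits the $\Bbb{G}_m$-invariant section $\hat\sig(t)=\bigl(t^{x_i}g^{-1}g_jb_j(t),\lam_j(b_j(t)^{-1})\bigr)$ with $b_j(t)=b_j^-u_j't^{-x_i}(u_j')^{-1}(b_j^-)^{-1}\in B^-$, checks that the first coordinate has a limit, and reads off the character $\lam_j(t^{u_j'x_i})$ directly --- proving existence of the limit and computing the weight in one stroke. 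Part (b) is then the same computation on $G/B$ with $\cl_B(\mu)$ attached to $e^{-\mu}$, whence the sign, as you correctly note.
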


  \begin{proof}  (a) 
 $\mu^{\cl_{B^-}(\lam_j)} (g_jB^-, \del (t))= \mu^{\cl_{B^-}
  (\lam_j)}(g^{-1}g_jB^-, t^{x_i})$.\\
By assumption, $g^{-1}_jg\in U^- u'_jB$.  Write
  \[
g_j^{-1}g = b_j^-u'_jp_j, \quad\text{ for some } \,b_j^-\in U^-,\, p_j \in B.
  \]
Thus,
  \[    1 = g^{-1}g_jb_j^- u_j' p_j.     \]
Let
  \[    b_j(t) = b_j^- u'_j t^{-x_i} (u'_j)^{-1} (b_j^-)^{-1} \in B^- .    \]
Then,
   \beqn\label{eqn01}
t^{x_i} g^{-1}g_jb_j(t) = t^{x_i}p_j^{-1} t^{-x_i} (u'_j)^{-1} (b_j^-)^{-1}.
  \eeqn
Consider the $G_m$-invariant section (via $t^{x_i}$) of $\cl_{B^-}(\lam_j):$
  \begin{align*}
\hat{\sig}(t) &= \bigl( t^{x_i}\, g^{-1}g_j, 1\bigr) \mod B^-\\
&= \bigl( t^{x_i}\, g^{-1}g_jb_j(t), \lam_j (b_j(t)^{-1})\bigr) \mod B^- .
  \end{align*}
Clearly,  $\lt_{t\to 0} \,t^{x_i}\, g^{-1}g_jb_j(t)$ exists in $G$ by \eqref{eqn01}.

Now,
  \begin{align*}
\lam_j \bigl( b_j(t)^{-1}\bigr) &= \lam_j\bigl( b_j^- u'_j t^{x_i} (u'_j)^{-1}
(b_j^-)^{-1}\bigr)\\
  &= \lam_j \bigl( t^{u_j' x_i}\bigr) .
    \end{align*}
This gives
  \[
\mu^{\cl_{B^-}(\lam_j)} (g_jB^-, \del (t)) = \lam_j (u'_j(x_i)) .
  \]

This proves the (a) part of the lemma.
\vskip1ex

  (b)  $\mu^{\cl_B(\mu )}(1\cdot B, \del (t)) = \mu^{\cl_B(\mu )}
  (g^{-1}B, t^{x_i})$.  By assumption,
  \[
 g\in B v'\cdot B.
   \]
Write
  \[
g = bv'p, \quad\text{for }b\in U, p\in B.
  \]
Thus,
  \[    1 = g^{-1} b v'p.   \]
Let
  \[    b(t) = bv't^{-x_i}(v')^{-1}b^{-1} \in B.   \]
Now,
  \[
t^{x_i}g^{-1} b(t) = t^{x_i}p^{-1} t^{-x_i}(v' )^{-1}b^{-1}.
  \]
Thus,
  \[
\lt_{t\to 0} \,t^{x_i}g^{-1}b(t) \text{ exists in } G^{\min}.
  \]

Consider the $G_m$-invariant section (via $t^{x_i}$)
  \begin{align*}
\hat{\sig}(t) &= (t^{x_i}g^{-1}, 1) \mod B\\
&= \bigl( t^{x_i}g^{-1}b(t), \mu (b(t))\bigr) \mod B.
  \end{align*}
Now,
  \begin{align*}
\mu (b(t)) &= \mu (bv' t^{-x_i} (v')^{-1}b^{-1})\\
&= \mu (t^{-v' x_i}).
  \end{align*}
This gives
  \[
\mu^{\cl_B(\mu )}(1\cdot B, \del (t)) = -\mu (v' (x_i)).
  \]

  This proves the (b)-part and hence the lemma is proved.
    \end{proof}

  \begin{definition} \label{n2.1}
  For a quasi-compact scheme $Y$, an $\co_{Y}$-module $\cs$ is called {\it coherent}
  if it is finitely presented as an $\co_{Y}$-module and any $\co_{Y}$-submodule of finite
  type admits a
finite presentation.

  An $\co_{\bar{X}}$-module $\cs$ is called {\it coherent} if
   $\cs_{|V^S}$ is a
coherent $\co_{V^S}$-module for any finite ideal $S\subset W$ (where a subset $S\subset W$
is called an {\it ideal} if
 for $x\in S$ and $y\leq x\Rightarrow y\in S$), where $V^S$ is the quasi-compact open subset
 of $\bar{X}$ defined by
 $$V^S = \bigcup_{w\in S} wU^- B/B.$$
 Let $K^0(\bar{X})$ denote the Grothendieck group of
 coherent $\co_{\bar{X}}$-modules $\cs$.

 Similarly,
define $K_0(X) := \lim_{n\to\infty} K_0(X_n)$, where $\{
X_n\}_{n\geq 1}$ is the filtration of $X$ giving the ind-projective
variety structure (i.e., $X_n = \bigcup_{\ell (w)\leq n} C_w$) and
$K_0(X_n)$ is the Grothendieck group of  coherent
sheaves on the projective variety $X_n$.

We also define
  \[
K^{\optop}(X) := \Invlt_{n\to\infty} K^{\optop}(X_n),
  \]
where $K^{\optop}(X_n)$ is the topological $K$-group of the
 projective variety $X_n$.

Let $*:K^{\optop}(X_n)\to K^{\optop}(X_n)$ be the involution induced from
the operation which takes a vector bundle to its dual. This,
 of course, induces the involution $*$ on $K^{\optop}(X)$.

For any $w\in W$,
  \[  [\co_{X_w}] \in K_0(X).  \]
  \end{definition}

  \begin{lemma}  $\bigl\{ [\co_{X_w}]\bigr\}_{w\in W}$ forms a basis of $K_0(X)$ as a $\bz$-module.
  \end{lemma}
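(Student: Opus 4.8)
The plan is to prove freeness at each finite stage of the ind-structure and then pass to the limit. Recall that $K_0(X)=\varinjlim_n K_0(X_n)$, the transition maps being the pushforwards $\iota_{n,*}\colon K_0(X_n)\to K_0(X_{n+1})$ along the closed embeddings $X_n\hookrightarrow X_{n+1}$, and that $\iota_{n,*}[\co_{X_w}]=[\co_{X_w}]$ whenever $\ell(w)\le n$. Hence it suffices to show that, for each $n$, $K_0(X_n)$ is free over $\bz$ with basis $\{[\co_{X_w}]:\ell(w)\le n\}$; granting this, each transition map is the obvious inclusion of a distinguished sub-free-module into the next, so the colimit $K_0(X)$ is free on $\{[\co_{X_w}]\}_{w\in W}$.

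To prove the statement for fixed $n$, I would list $\{w\in W:\ell(w)\le n\}=\{w_1,\dots,w_N\}$ with $\ell(w_1)\le\cdots\le\ell(w_N)$ and set $Y_k:=\bigcup_{j\le k}C_{w_j}$. Since $v<w_j$ forces $\ell(v)<\ell(w_j)$, every cell occurring in $X_{w_j}$ precedes $w_j$ in this list, so $Y_k=\bigcup_{j\le k}X_{w_j}$ is a closed subvariety of $X_n$ and $Y_k\setminus Y_{k-1}=C_{w_k}\cong\ba^{\ell(w_k)}$. I would then show by induction on $k$ that $K_0(Y_k)$ is free with basis $\{[\co_{X_{w_j}}]:j\le k\}$, the case $k=1$ being $K_0(\mathrm{pt})=\bz\cdot[\co_{X_e}]$ and the case $k=N$ giving the desired statement for $X_n$. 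For the inductive step I apply Quillen's localization sequence in $G$-theory to the closed subscheme $Y_{k-1}\subset Y_k$ with open complement $C_{w_k}\cong\ba^{d}$, $d:=\ell(w_k)$:
\[
G_1(\ba^{d})\xrightarrow{\ \partial\ }K_0(Y_{k-1})\xrightarrow{\ \iota_*\ }K_0(Y_k)\xrightarrow{\ \rho\ }K_0(\ba^{d})\to 0 .
\]
By homotopy invariance of $G$-theory, $K_0(\ba^{d})\cong\bz$ generated by $[\co_{\ba^{d}}]$ and $G_1(\ba^{d})\cong K_1(\bc)=\bc^\times$. Since $\rho[\co_{X_{w_k}}]=[\co_{X_{w_k}\cap C_{w_k}}]=[\co_{C_{w_k}}]$ is this generator, $\rho$ is split surjective and, combined with the inductive hypothesis and $\iota_*[\co_{X_{w_j}}]=[\co_{X_{w_j}}]$, the set $\{[\co_{X_{w_j}}]:j\le k\}$ generates $K_0(Y_k)$. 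Finally, the image of $\partial$ is a divisible subgroup of $K_0(Y_{k-1})$, which by the inductive hypothesis is finitely generated and free; a finitely generated abelian group has no nonzero divisible subgroup, so $\partial=0$. Thus $0\to K_0(Y_{k-1})\xrightarrow{\iota_*}K_0(Y_k)\xrightarrow{\rho}\bz\to 0$ is exact and (since $\bz$ is free) split, $K_0(Y_k)\cong K_0(Y_{k-1})\oplus\bz$ is free of rank $k$, and $\{[\co_{X_{w_j}}]:j\le k\}$, mapping to a basis of each factor, is a basis. This completes the induction and, with the first paragraph, the proof.

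The only point beyond bookkeeping is the vanishing of the boundary $\partial$, equivalently the injectivity of the pushforwards $K_0(Y_{k-1})\to K_0(Y_k)$; this is exactly where one uses that the Bruhat cells are affine spaces, so that $G_1$ of each cell is the divisible group $\bc^\times$, together with the inductive torsion-freeness. (Alternatively, one may simply quote the general fact that the $G$-theory of a noetherian scheme stratified by affine spaces is free on the classes of the closed strata.) Everything else — that the partial unions $Y_k$ are closed, that the restriction of $\co_{X_{w_k}}$ to its open cell is $\co_{C_{w_k}}$, and that the transition maps of the direct system preserve the distinguished bases — is routine.
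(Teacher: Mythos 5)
Your argument is correct and is in substance the same as the paper's: the paper disposes of this lemma in one line by citing the cellular fibration lemma of Chriss--Ginzburg [CG, \S 5.2.14 and Theorem 5.4.17], and what you have written out (induction over the cells ordered by length, Quillen localization, homotopy invariance, and vanishing of the boundary map because $G_1(\ba^d)\cong\bc^\times$ is divisible while the target is finitely generated free) is precisely the proof of that cited result, followed by the same passage to the colimit.
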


  \begin{proof} By [CG, \S 5.2.14 and Theorem 5.4.17], the result follows.
  \end{proof}

  For $u\in W$, by [KS, \S 2], $\co_{X^u}$ is a coherent $\co_{\bar{X}}$-module.
  In particular, $\co_{\bar{X}}$ is a coherent $\co_{\bar{X}}$-module.

Define a pairing
$$
\langle \, ,\, \rangle : K^0(\bar{X}) \otimes K_0(X) \to \bz,\,\,
\langle[\cs], [\cf]\rangle  = \sum_i (-1)^i \chi \bigl (X_n, \tor_
i^{\co_{\bar{X}}}
 (\cs,\cf ) \bigr),$$
if $\cs$ is a coherent sheaf on $\bar{X}$ and $\cf$
is a coherent sheaf on ${X}$ supported in $X_n$ (for
some $n$), where $\chi$ denotes the 
Euler-Poincar\'{e} characteristic.
Then, as in [K$_4$, Lemma 3.4], 
  the above pairing is well defined.
 
By [KS, Proof of Proposition 3.4], for any $u\in W$,
  \beqn\label{eq1.0}
\ext^k_{\co_{\bar{X}}} (\co_{X^u}, \co_{\bar{X}}) =0 \quad\forall k\neq \ell (u).
  \eeqn
Define the sheaf
  \[
\om_{X^u} := \ext^{\ell (u)}_{\co_{\bar{X}}}
\bigl(\co_{X^u}, \co_{\bar{X}} \bigr)\otimes\cl (-2\rho ),
  \]
  which, by the analogy with the Cohen-Macaulay (for short CM) schemes of finite type, will be called
  the {\it dualizing sheaf} of $X^u$.

Now, set the  sheaf on $\bar{X}$
  \begin{align*}
\xi^u &:= \cl (\rho )\om_{X^u} \\
&= \cl (-\rho ) \ext^{\ell (u)}_{\co_{\bar{X}}}
(\co_{X^u}, \co_{\bar{X}} ).
  \end{align*}
Then,  as proved in  [K$_4$, Proposition 3.5],  for any $u,w\in W$,
\beqn\label{e106}
\langle[\xi^u], [\co_{X_w}]\rangle = \delta_{u,w}.
\eeqn
With these preliminaries, we are ready to prove the following result.
  \begin{proposition}  \label{prop5} With the notation as in the proof of Theorem \ref{thm1},
$Z = (X^-)^{s}$, if  $\eps^v $ occurs in $\eps^{u_1}
\cdots\eps^{u_s}$ with
 nonzero coefficient.
  \end{proposition}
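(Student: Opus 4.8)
The plan is to show that the set $Z$ from the proof of Theorem \ref{thm1} is in fact all of $(X^-)^s$ by reducing the statement to a transversality-type assertion about intersecting the opposite Schubert varieties $X^{u_j}$ (in $\bar X$) with the finite-dimensional Schubert variety $X_v$ (in $X$), and then translating this into the cohomological pairing $\langle \cdot,\cdot\rangle$ between $K^0(\bar X)$ and $K_0(X)$. First I would fix a point $gB \in X$ and reinterpret the condition ``$g_1 X^{u_1} \cap \cdots \cap g_s X^{u_s} \cap X_v \neq \emptyset$'' as saying that a certain sheaf-theoretic intersection is nonzero; since $X_v$ is a projective variety of dimension $\ell(v)$ and each $X^{u_j}$ has codimension $\ell(u_j)$ in $\bar X$, the expected dimension of $g_1 X^{u_1} \cap \cdots \cap g_s X^{u_s} \cap X_v$ is $\ell(v) - \sum_j \ell(u_j)$, which is nonnegative precisely when the cohomological product $\eps^{u_1}\cdots \eps^{u_s}$ can contain $\eps^v$ with nonzero coefficient.

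The key computational step is to express the nonvanishing of the intersection via the pairing $\langle [\xi^{u_1}]\cdots,\ [\co_{X_v}]\rangle$ or, more precisely, via the structure sheaves $[\co_{X^{u_j}}]$. I would proceed as follows: (i) show that for generic $(g_1,\dots,g_s)$ the translated subvarieties $g_j X^{u_j}$ meet $X_v$ properly (this is a Kleiman--Bertini / genericity argument, valid here because $G^{\min}$ acts transitively on $X$ and we may move each $X^{u_j}$ independently by an element of $G^{\min}$); (ii) conclude that for such generic tuples the class of the scheme-theoretic intersection in $K_0(X_n)$ (where $n$ bounds $\ell(v)$) equals the product of the relevant $K$-theory classes; (iii) pair this against $[\co_{\bar X}]$ — or use the identity \eqref{e106} and the relation between the $\eps^w$ basis in cohomology and the $[\co_{X_w}]$, $[\xi^u]$ bases in $K$-theory — to see that the resulting integer is (up to known positivity) controlled by $n^v_{u_1,\dots,u_s}$. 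Since $n^v_{u_1,\dots,u_s}\neq 0$ by hypothesis, the generic intersection is nonempty, so $Z$ contains a dense open subset; then I would upgrade ``dense open'' to ``all of $(X^-)^s$'' by invoking that the Schubert cell decompositions of $\bar X$ and $X$ are $G^{\min}$-stable and that every $G^{\min}$-translate of a Schubert variety still meets $X_v$ appropriately — concretely, using the Bruhat-decomposition bookkeeping already present in the proof of Theorem \ref{thm1} (the passage to $u_j' \geq u_j$ and $v' \leq v$), which shows the relevant intersections only improve under specialization.

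The main obstacle I anticipate is step (ii): making rigorous the claim that a cohomological nonvanishing of the Schubert structure constant $n^v_{u_1,\dots,u_s}$ forces a \emph{geometric} nonemptiness of the intersection of generic translates, in the ind-variety setting where $\bar X$ is infinite-dimensional and not quasi-compact. In the finite-dimensional flag variety this is classical (the product of Schubert classes is computed by intersecting generic translates, and all intersection numbers of Schubert varieties are nonnegative so nonvanishing of the product implies nonemptiness), but here one must be careful that the intersection takes place inside the quasi-compact piece $X_n \subset X$ and that the sheaves $\co_{X^{u_j}}$ on $\bar X$ restrict correctly to this piece. I would handle this by restricting everything to $X_n$ for $n = \ell(v)$ (or slightly larger), where $\bar X$ may be replaced by a suitable finite-dimensional smooth compactification on which standard intersection theory applies, and then transport the conclusion back; the positivity of Schubert intersection numbers — which in the Kac--Moody case follows from the geometric (rather than purely combinatorial) definition of the $n^w_{\dots}$ together with the fact that Schubert varieties are irreducible and the relevant intersections, being of expected dimension, are generically reduced — is what guarantees that nonvanishing of the product detects a genuine point of the intersection. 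Once the generic case is in hand, the $G^{\min}$-equivariance and the monotonicity in the Bruhat order (already used in Theorem \ref{thm1}) close the gap to all of $(X^-)^s$.
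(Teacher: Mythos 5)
Your overall instinct --- that the nonvanishing of $n^v_{u_1,\dots,u_s}$ should be detected by a sheaf-theoretic pairing whose terms are supported on the intersection $g_1X^{u_1}\cap\cdots\cap g_sX^{u_s}\cap X_v$ --- is exactly the right one, and your step (iii) is close to what the paper does. But the route you build around it (Kleiman--Bertini genericity, proper intersections, positivity of structure constants) is both harder to justify here and, more importantly, structurally unable to deliver the statement as claimed. The proposition asserts $Z=(X^-)^s$, i.e.\ nonemptiness for \emph{every} tuple of translates, whereas a transversality argument can only ever produce a dense open set of good tuples. Your proposed upgrade from ``dense open'' to ``all'' --- invoking the passage to $u_j'\geq u_j$, $v'\leq v$ from the proof of Theorem \ref{thm1} --- does not close this gap: that Bruhat bookkeeping identifies which cells a point of an already-nonempty intersection lies in; it says nothing about why a non-generic translate should still meet $X_v$. (A correct upgrade does exist --- $Z$ is the image of the closed incidence variety $\{(\bar g_1,\dots,\bar g_s,x): x\in g_jX^{u_j}\ \forall j\}\subset (X^-)^s\times X_v$ under the proper projection off the projective factor $X_v$, hence closed, and a closed set containing a dense open subset of the irreducible space $(X^-)^s$ is everything --- but you do not make this argument, and the one you do make is not valid.)

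The paper avoids all of this by running the argument contrapositively and at the level of $K$-theory classes rather than actual intersections. The structure constants in topological $K$-theory satisfy $m^w_{u,v}=\langle[\xi^u\boxtimes\xi^v],[\Del_*\co_{X_w}]\rangle$, and --- this is the key point your proposal has no substitute for --- $[(g_1^{-1},g_2^{-1})\cdot\Del_*\co_{X_w}]=[\Del_*\co_{X_w}]$ in $K_0(X\times X)$ for \emph{every} $g_1,g_2\in G^{\min}$, since $G^{\min}$ is connected. So the pairing may be computed with an arbitrary (not generic) translate; each $\tor_i$ sheaf appearing in it is supported inside $X^u\times X^v\cap(g_1^{-1},g_2^{-1})\Del(X_w)$, hence vanishes identically whenever $g_1X^u\cap g_2X^v\cap X_w=\emptyset$, forcing $m^w_{u,v}=0$ and then $n^w_{u,v}=0$ in the top degree $\ell(w)=\ell(u)+\ell(v)$ by [KK, Proposition 2.30]. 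No transversality, no generic reducedness, and no positivity of the $n^w_{u,v}$ is needed --- emptiness of the set-theoretic intersection alone kills every Tor term regardless of signs. I would encourage you to reorganize your argument in this contrapositive form; as written, the genericity machinery is doing work that the translation-invariance of $K$-theory classes does for free, and it is precisely the non-generic tuples that your version cannot reach.
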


  \begin{proof}  We give the proof in the case $s=2$.  The proof for general
 $s$ is similar.

For $u,v\in W$, express
  \[
\eps^u\eps^v = \sum_{ \substack{w\\ \ell (w)=\ell (u)+\ell (v)} } n^w_{u,v} \eps^w.
  \]
Express the product in topological $K$-theory $K^{\optop}(X)$ of $X=G^{\min}/B$:
  \[
\psi^u_o\psi^v_o = \sum_{\ell (w)\geq\ell (u)+\ell (v)} m^w_{u,v} \psi^w_o,
  \]
where $\psi^w := *\tau^{w^{-1}}$ ($\tau^w$ being the Kostant-Kumar `basis'
of $K^{\optop}_H(X)$ as in [KK, Remark 3.14]) and $\{\psi^{w}_o\}_{w \in W}$ is the corresponding `basis' of 
$K^{\optop}(X)\simeq \bz\otimes_{R(H)}\,K^{\optop}_H(X),$  cf. [KK, Proposition 3.25]). 

Then, by [KK, Proposition 2.30],
  \beqn\label{e102}
n_{u,v}^w = m_{u,v}^w,  \quad\text{if }\ell (w) = \ell (u)+\ell (v).
  \eeqn
Let $\Delta: X \to X\times X$ be the diagonal map. Then, by [K$_4$, Proposition 4.1] and the identity \eqref{e106}, for any $u,v,w \in W$, $g_1,g_2\in G^{\min}$,
  \begin{align*}
m^w_{u,v} &= \langle [\xi^u\boxtimes\xi^v], [\Del_*\co_{X_w}]\rangle \\
&= \langle [\xi^u\boxtimes\xi^v], [(g_1^{-1}, g_2^{-1}) \cdot (\Del_* \co_{X_w})]\rangle,
  \end{align*}
since $[(g_1^{-1}, g_2^{-1})\cdot\Del_*\co_{X_w}]= [\Del_*\co_{X_w}]$ as elements
of $K_0(X\times X)$.  Thus,
 \begin{align}\label{e103}
 m^w_{u,v} &= \langle [\xi^u\boxtimes\xi^v], [(g_1^{-1}, g_2^{-1}) \cdot (\Del_*\co_{X_w})]\rangle
 \\ &:= \sum_i (-1)^i \chi (\bar{X}\times \bar{X}, \tor_i^{\co_{\bar{X}\times \bar{X}}} \Bigl(\xi^u\boxtimes\xi^v,(g_1^{-1}, g_2^{-1}) \cdot (\Del_* \co_{X_w})\Bigr) .\notag
  \end{align}

Now, by definition, the support of $\xi^u$ is contained in $X^u$ and hence the
support of the sheaf
  \[
\cs_i := \tor_i^{\co_{\bar{X}\times \bar{X}}} \bigl( \xi^u\boxtimes\xi^v, (g_1^{-1}, g_2^{-1})\cdot \Del_* \co_{X_w}\bigr)
  \]
is contained in
  \beqn\label{e104}
X^u\times X^v \cap \bigl((g_1^{-1}, g_2^{-1})\cdot \Del (X_w)\bigr),
  \eeqn
which is empty if
  \beqn\label{e105}
(g_1X^u) \cap (g_2X^v) \cap X_w = \emptyset .
  \eeqn
Thus, if the equation \eqref{e105} is true, then the Tor sheaf $\cs_i =0$ $\forall i\geq 0$.  Thus, if 
the equation \eqref{e105} is satisfied, 
  \[   m_{u,v}^w =0.   \]
Now, assume that $\ell (w) = \ell (u)+\ell (v)$.  Then, by the equation \eqref{e102},
  \[  n^w_{u,v} =0, \quad\text{ if the equation \eqref{e105} is satisfied}.   \]
But, since by assumption, $n^w_{u,v} \neq 0$, we see that
  \[
(g_1X^u) \cap (g_2X^v )\cap X_w \neq \emptyset , \;\text{ for any } g_1,g_2\in G^{\min}.
  \]
But since $G^{\min}/(G^{\min} \cap B^-) \simto X^-$, we get the proposition.
  \end{proof}

\section{Tensor Product Decomposition for Affine Kac-Moody Lie Algebras}

\subsection{The Virasoro Algebra}

We recall the definition of the Virasoro algebra and its basic representation theory, which we need. 
The {\it Virasoro algebra} $\mathrm{Vir}$ has a basis $\{C,\, L_{n}\;:\; n\in\mathbb{Z}\}$
over $\bc$ 
and the Lie bracket is given by 
\[[L_{m},L_{n}]=(m-n)L_{m+n}+\frac{1}{12}(m^{3}-m)\delta_{m,-n}C\,\,
\text{and}\, [\mathrm{Vir,C]=0}.\] 

Let $\Vir_0:= \mathbb{C}L_{0}\oplus\mathbb{C}C$. Then,  a Vir module
$V$ is said to be a {\it highest weight representation} if there exists a $\Vir_0$-eigenvector $v_o\in V$
such that $L_{n}v_o=0$ for $n\in\mathbb{Z}_{>0}$ and $U(\bigoplus_{n<0}\mathbb{C}L_{n})v_o=V$.
Such a $V$ is said to have {\it highest weight}   $\lambda\in \Vir_0^{*}$
if $Xv_o=\lambda(X)v_o$, for all $X\in \Vir_0$. (It is easy to see that such a $v_o$ is unique up to a scalar multiple and hence
$\lambda$ is unique.)
The irreducible
highest weight representations of Vir are in 1-1 correspondence with elements
of $\Vir_0^{*}$ given by the highest weight. 
Denote the basis of $\Vir_0^*$ dual to the basis $\{L_{0},C\}$ of $\Vir_0$ as $\{h,z\}$. 
For any $\mu\in \Vir_0^*$, denote the $\mu$-th weight space 
of $V$ by $V_\mu$, i.e.,
\[V_\mu:=\{v\in V: X\cdot v=\mu(X)v\,\, \forall X\in \Vir_0\}.\]

Define a Vir module $V$ to be {\it unitarizable} if there exists a positive
definite Hermitian form $(\cdot\,,\,\cdot)$ on $V$ so that $(L_{n}v\,,\, w)=(v\,,\, L_{-n}w)$
for all $n\in\mathbb{Z}$ and $(Cv\,,\, w)=(v\,,\, Cw)$. It is easy
to see that if $M$ is a $\Vir$-submodule of $V$, then $M^{\perp}$
is also a submodule. Hence, any unitarizable representation of Vir
is completely reducible. Note that for a unitarizable highest weight Vir-representation
$V$ with highest weight $\lambda$, if $v_o$ is a highest weight vector,
then 
\beqn \label{e4.2}
0\leq(L_{-n}v_o\,,\, L_{-n}v_o)=(L_{n}L_{-n}v_o\,,\, v_o)=(2n\lambda(L_{0})+\frac{1}{12}(n^{3}-n)\lambda(C))(v_o\,,\, v_o)
\eeqn
 for all $n>0$. Therefore, both $\lambda(L_{0})$ and $\lambda(C)$
must be nonnegative real numbers.

\begin{lem}\label{virasoro}
Let $V$ be a unitarizable,  highest weight (irreducible) representation of $Vir$
with highest weight $\lambda$. 

(a) If $\lambda(L_0)\neq 0$, then $V_{\lambda+nh}\neq 0$, for any $n\in \bz_+$. 

(b) If 
$\lambda(L_0)= 0$ and $\lambda (C)\neq 0$, then $V_{\lambda+nh}\neq 0$, for any $n\in \bz_{>1}$ and 
$V_{\lambda+h} =0$.

(c)  If 
$\lambda(L_0)= \lambda (C) = 0$, then $V$ is one dimensional. 
\end{lem}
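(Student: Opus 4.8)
The plan is to analyze the structure of a unitarizable irreducible highest weight $\Vir$-module $V$ with highest weight $\lambda$ by examining the action of the nonnegative Laurent modes $L_{-1}, L_{-2}$ on the highest weight vector $v_o$, and more generally using the weight grading $V = \bigoplus_{n \geq 0} V_{\lambda + nh}$ (which holds since $V$ is a highest weight module, so all weights are of the form $\lambda + nh$ for $n \in \bz_+$, with $V_\lambda = \bc v_o$). The key computational input is already recorded in \eqref{e4.2}: for any $n > 0$,
\[
(L_{-n}v_o\,,\, L_{-n}v_o) = \Bigl(2n\lambda(L_0) + \tfrac{1}{12}(n^3 - n)\lambda(C)\Bigr)(v_o\,,\,v_o),
\]
so $L_{-n}v_o = 0$ if and only if the scalar $2n\lambda(L_0) + \tfrac{1}{12}(n^3-n)\lambda(C)$ vanishes, since the form is positive definite. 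The strategy for each part is to determine which of the $L_{-n}v_o$ are nonzero, and then to propagate nonvanishing of weight spaces upward.

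\medskip

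For part (c), if $\lambda(L_0) = \lambda(C) = 0$, then the displayed scalar vanishes for every $n > 0$, so $L_{-n}v_o = 0$ for all $n > 0$. Since $V = U(\bigoplus_{n<0} \bc L_n) v_o$ and the subalgebra $\bigoplus_{n<0}\bc L_n$ is generated by $L_{-1}$ and $L_{-2}$ (indeed $[L_{-1}, L_{-n}] = (n-1)L_{-n-1}$), I would argue by induction on weight that $V_{\lambda + nh} = 0$ for all $n > 0$: any element of $V$ is a sum of terms $L_{-i_1}\cdots L_{-i_k}v_o$, and using the commutation relations one can move an $L_{-i}$ with smallest index to the right until it hits $v_o$ (picking up lower-order terms that are handled by induction), killing the term. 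Hence $V = \bc v_o$ is one-dimensional.

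\medskip

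For part (a), suppose $\lambda(L_0) \neq 0$. Then $2\lambda(L_0) + 0 = 2\lambda(L_0) \neq 0$ (taking $n=1$), so $L_{-1}v_o \neq 0$; hence $V_{\lambda + h} \neq 0$. To get $V_{\lambda + nh} \neq 0$ for all $n$, I would show $L_{-1}^n v_o \neq 0$ by induction: compute $(L_{-1}^n v_o\,,\, L_{-1}^n v_o) = (L_1 L_{-1}^n v_o\,,\, L_{-1}^{n-1}v_o)$ and use $[L_1, L_{-1}] = 2L_0$ together with $L_0 (L_{-1}^{k}v_o) = (\lambda(L_0) + k)(L_{-1}^k v_o)$ to derive a recursion $\|L_{-1}^n v_o\|^2 = c_n \|L_{-1}^{n-1}v_o\|^2$ with $c_n = n(2\lambda(L_0) + n - 1) > 0$ (using $\lambda(L_0) \geq 0$, so $2\lambda(L_0) + n - 1 \geq n - 1 > 0$ for $n \geq 2$, and $= 2\lambda(L_0) > 0$ for $n = 1$). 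Positivity of the form then forces $L_{-1}^n v_o \neq 0$, so $V_{\lambda + nh} \neq 0$. For part (b), when $\lambda(L_0) = 0$ and $\lambda(C) \neq 0$: taking $n = 1$ the scalar is $0$, so $L_{-1}v_o = 0$, giving $V_{\lambda + h} = 0$; taking $n = 2$ the scalar is $\tfrac{1}{12} \cdot 6 \cdot \lambda(C) = \tfrac{1}{2}\lambda(C) > 0$ (positive since $\lambda(C) \geq 0$ and nonzero), so $L_{-2}v_o \neq 0$, giving $V_{\lambda + 2h} \neq 0$. For $n > 2$: either argue directly that $L_{-2}^k v_o$ and $L_{-2}^{k}L_{-1}\cdots$-type vectors cover all higher weights, or more cleanly note $L_{-n}v_o \neq 0$ for all $n \geq 2$ (the scalar $\tfrac{1}{12}(n^3-n)\lambda(C) > 0$), which immediately gives $V_{\lambda + nh} \neq 0$ for every $n \geq 2$, while $V_{\lambda + h} = 0$ was shown.

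\medskip

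\textbf{The main obstacle} I anticipate is part (a)'s claim that \emph{all} weight spaces $V_{\lambda+nh}$ are nonzero — this is not immediate from nonvanishing of a single vector and requires the explicit norm recursion for $L_{-1}^n v_o$ above; one must be careful that the recursion coefficient $n(2\lambda(L_0)+n-1)$ is genuinely positive for every $n\geq 1$, which uses $\lambda(L_0) \geq 0$ (itself from \eqref{e4.2}) together with the hypothesis $\lambda(L_0)\neq 0$ at the base step. The inductive "straightening" argument in part (c) is routine but needs the observation that $L_{-1}, L_{-2}$ generate $\bigoplus_{n<0}\bc L_n$, which is the only slightly nonobvious structural point there.
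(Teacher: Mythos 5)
Your proof is correct and follows essentially the same route as the paper, whose entire argument is to read off from \eqref{e4.2} that $L_{-n}v_o$ is nonzero (or zero) for the relevant $n$, together with the PBW observation in part (c). The norm recursion for $L_{-1}^n v_o$ in part (a) is unnecessary: exactly as you note at the end of part (b), the nonzero vector $L_{-n}v_o$ already lies in $V_{\lambda+nh}$ for each $n\geq 1$ (since $2n\lambda(L_0)+\tfrac{1}{12}(n^3-n)\lambda(C)>0$ once $\lambda(L_0)>0$ and $\lambda(C)\geq 0$), so the ``main obstacle'' you anticipate is not one.
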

\begin{proof}
If  $\lambda(L_0)\neq 0$, then by the equation \eqref{e4.2} (since both of $\lambda (L_0)$ and $\lambda(C)\in \br_+$), 
$L_{-n}v_o\neq 0$, for any $n\in \bz_+$. 

If  $\lambda(L_0)=0$ and $\lambda(C)\neq 0$,  then again by the equation \eqref{e4.2}, 
$L_{-n}v_o\neq 0$, for any $n\in \bz_{>1}$. Also, $L_{-1}v_o=0$. 

 If $\lambda(L_0)=\lambda(C)= 0$, then (by the equation \eqref{e4.2} again), 
$L_{-n}v_o= 0$, for any $n\in \bz_{\geq 1}$. This shows that $V$ is one dimensional.
\end{proof}
\subsection{Tensor product decomposition: A general method}

Let $\mathfrak{g}$ be the untwisted affine Kac-Moody Lie algebra associated to a  finite dimensional simple 
Lie algebra $\overset{\circ}{\mathfrak{g}}$, i.e.,
\[\fg=\bigl(\overset{\circ}{\mathfrak{g}}\otimes \bc[t,t^{-1}]\bigr) \oplus \bc c\oplus \bc d.\]
Let $\overset{\circ}{\mathfrak{h}}$ be a Cartan subalgebra of $\overset{\circ}{\mathfrak{g}}$. Then, 
\[\fh:=\overset{\circ}{\mathfrak{h}}\otimes 1\oplus\bc c\oplus \bc d\]
is the standard Cartan subalgebra of $\fg$.
 Let $\delta\in \fh^*$ be the smallest positive imaginary root of 
$\fg$ (so that the positive imaginary roots of $\fg$ are precisely $\{n\delta, n\in \bz_{\geq 1}\}$).  Then,
$\delta$ is given by $\delta_{|\overset{\circ}{\mathfrak{h}}\oplus \bc c}\equiv 0$ and $\delta(d)=1$. 
For any $\Lambda \in P_+$, let $P(\Lambda)$ 
be the set of weights of $L(\Lambda)$ and let $P^o(\Lambda)$ be the set of $\delta$-maximal weights of $L(\Lambda)$, i.e.,
\[
P^o(\Lambda)=\left\{ \lambda\in\mathfrak{h}^{*}:\lambda \in P(\Lambda) \,\,\text{but}\,\, \lambda+n\delta \notin P(\Lambda)\,\,
\text{for any}\,\,n>0\right\}.
\]
For any $\lambda \in X(H)$, define the $\delta$-{\it character of $L(\Lambda)$ through} $\lambda$ by
\[c_{\Lambda,\lambda}=\sum_{n\in\mathbb{Z}}\dim L(\Lambda)_{\lambda+n\delta}\,e^{n\delta}.\]
Since $\delta$ is $W$-invariant,
\beqn \label{e4.1}
c_{\Lambda,\lambda}=c_{\Lambda, w\lambda}, \,\,\text{for any}\, w\in W.
\eeqn
Moreover, $P^o(\Lambda)$ is $W$-stable.
 It
is obvious that 
\beqn \label{e13}
ch\, L(\Lambda)=\sum_{\lambda\in P^o(\Lambda)}c_{\Lambda,\lambda}e^{\lambda}.
\eeqn
By [K$_3$, Exercise 13.1.E.8], for any $\lambda\in P(\Lambda')$ and  $\Lambda''\in P_+$, $\Lambda''+\lambda+\rho$ belongs to the Tits cone. Hence,
there exists $v\in W$ such that $v^{-1}(\Lambda''+\lambda+\rho)\in P_+$. Moreover, if $\Lambda''+\lambda+\rho$ has nontrivial $W$-isotropy, then its isotropy group must contain a reflection (cf. [K$_3$, Proposition 1.4.2(a)]). Thus, for such a $\lambda\in P(\Lambda')$, i.e., if $\Lambda''+\lambda+\rho$ has nontrivial $W$-isotropy,
\beqn\label{e14} \sum_{w\in W}\,\varepsilon(w) e^{w(\Lambda''+\lambda+ \rho)}=0.\eeqn
Define 
$$ \bar{P}_+:=\{\Lambda \in P_+: \Lambda(d)=0\}.$$
For any $m\in \bz_+$, let 
\[P_+^{(m)}:=\{\Lambda\in P_+: \Lambda (c)=m\},\]
and let 
\[\bar{P}_+^{(m)}:=\bar{P}_+\cap  P_+^{(m)}.\]
 Then, 
${\bar{P}}_+^{(m)}$ provides a set of representatives in $P_+^{(m)}$ mod $(P_+\cap \bc\delta)$.  

For any $\Lambda, \Lambda',\Lambda''\in P_+$, define
\begin{align*} T_{\Lambda}^{\Lambda',\Lambda''}=\{&\lambda \in P^o(\Lambda'): \exists v_{\Lambda,\Lambda'', \lambda}\in W\,\,\text{and}\, 
S_{\Lambda,\Lambda'', \lambda}\in \bz \,\,\text{with}\\ 
&\, \lambda+\Lambda''+\rho=v_{\Lambda,\Lambda'', \lambda}(\Lambda+\rho)+ 
S_{\Lambda,\Lambda'', \lambda}\delta\}.
\end{align*}
Observe that since $\Lambda+\rho +n\delta \in P_{++}$ for any $n\in \bz$, such a $v_{\Lambda,\Lambda'', \lambda}$ and $S_{\Lambda,\Lambda'', \lambda}$ are unique by [K$_3$, Proposition 1.4.2 (a), (b)]
(if they exist). Also, observe that 
\beqn \label{eq1001} T_{\Lambda}^{\Lambda',\Lambda''}=\emptyset,\,\,\text{unless}\, \Lambda (c)=\Lambda'(c)+\Lambda''(c)\,\,\,\text{and}\,\, 
\Lambda'+\Lambda''-\Lambda\in Q,\eeqn
where $Q$ is the root lattice of $\fg$.
\begin{prop} \label{tensor} For any $\Lambda'$ and $\Lambda''\in P_+$, 
\[ch\,\bigl( L(\Lambda')\otimes L(\Lambda'')\bigr) = \sum_{\Lambda\in \bar{P}_{+}^{(m)}}ch\, L(\Lambda)\bigl(\sum_{\lambda\in T_{\Lambda}^{\Lambda',\Lambda''}}\varepsilon(v_{\Lambda,\Lambda'',\lambda})c_{\Lambda',\lambda}e^{S_{\Lambda,\Lambda'',\lambda}\delta}\bigr),\]
where $m:=\Lambda'(c)+\Lambda''(c)$. 

Moreover, $\sum_{\lambda\in T_{\Lambda}^{\Lambda',\Lambda''}}\varepsilon(v_{\Lambda,\Lambda'',\lambda})c_{\Lambda',\lambda}e^{S_{\Lambda,\Lambda'',\lambda}\delta}$ is the character of a unitary representation (though, in general, not irreducible) of the Virasoro algebra 
$\mathrm{Vir}$  with central charge 
$$\dim \overset{\circ}{\mathfrak{g}}\cdot\bigl(\frac{m'}{m'+g}+\frac{m''}{m''+g}-\frac{m}{m+g}\bigr),$$
where $m':=\Lambda'(c), m'':=\Lambda''(c)$ and $g$ is the dual Coxeter number of $\overset{\circ}{\mathfrak{g}}$.
\end{prop}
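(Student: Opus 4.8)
The plan is to prove the character identity by working inside the Weyl--Kac character formula for $\fg$ and then reinterpreting the resulting multiplicity coefficients as Virasoro characters via the coset (GKO) construction. First I would write $\ch L(\Lambda') = \sum_{\lambda\in P^o(\Lambda')} c_{\Lambda',\lambda}\,e^\lambda$ using \eqref{e13}, and multiply by $\ch L(\Lambda'')$ expressed through the Weyl--Kac formula, $\ch L(\Lambda'') = \bigl(\sum_{w\in W}\eps(w)e^{w(\Lambda''+\rho)}\bigr)/\bigl(\sum_{w\in W}\eps(w)e^{w\rho}\bigr)$. Then $\ch\bigl(L(\Lambda')\otimes L(\Lambda'')\bigr)\cdot\bigl(\sum_w\eps(w)e^{w\rho}\bigr) = \sum_{\lambda\in P^o(\Lambda')}\sum_{w\in W}\eps(w)\,c_{\Lambda',\lambda}\,e^{\lambda+w(\Lambda''+\rho)}$, and I would reorganize the right-hand side by collecting, for each $\lambda$, the $W$-orbit contribution of $\lambda+\Lambda''+\rho$. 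By the Tits-cone argument already recalled in the excerpt (from [K$_3$, Exercise 13.1.E.8] and [K$_3$, Proposition 1.4.2(a)]), for each $\lambda\in P^o(\Lambda')$ either $\lambda+\Lambda''+\rho$ has nontrivial $W$-isotropy — in which case its alternating sum vanishes by \eqref{e14} and it contributes nothing — or there is a unique $v=v_{\Lambda,\Lambda'',\lambda}\in W$ and a unique $S=S_{\Lambda,\Lambda'',\lambda}\in\bz$ with $\lambda+\Lambda''+\rho = v(\Lambda+\rho)+S\delta$ for a unique $\Lambda\in\bar P_+^{(m)}$ (here the shift by $\bz\delta$ is exactly the freedom in choosing $\bar P_+^{(m)}$ as representatives mod $P_+\cap\bc\delta$, and $W$-invariance of $\delta$ makes $w(v(\Lambda+\rho)+S\delta)=wv(\Lambda+\rho)+S\delta$). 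Substituting, the sum over $\lambda$ contributing to a fixed $\Lambda$ becomes $\bigl(\sum_{\lambda\in T_\Lambda^{\Lambda',\Lambda''}}\eps(v_{\Lambda,\Lambda'',\lambda})c_{\Lambda',\lambda}e^{S_{\Lambda,\Lambda'',\lambda}\delta}\bigr)\cdot\bigl(\sum_{w'\in W}\eps(w')e^{w'(\Lambda+\rho)}\bigr)$; dividing through by the Weyl denominator $\sum_w\eps(w)e^{w\rho}$ and applying the Weyl--Kac formula in reverse gives the stated identity with coefficient $\ch L(\Lambda)$.

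For the second assertion I would invoke the Sugawara/coset construction. The diagonal embedding $\fg\hookrightarrow\fg\oplus\fg$ at levels $(m',m'')$ produces, on any summand $L(\Lambda')\otimes L(\Lambda'')$, a representation of the coset Virasoro algebra with central charge $c_{\overset{\circ}{\fg},m'}+c_{\overset{\circ}{\fg},m''}-c_{\overset{\circ}{\fg},m}$, where $c_{\overset{\circ}{\fg},k}=\dim\overset{\circ}{\fg}\cdot k/(k+g)$; this is exactly the claimed central charge. The operators $L_n^{\mathrm{coset}}$ commute with the diagonal $\fg$-action, so they act on each isotypic component $\Hom_{\fg}(L(\Lambda), L(\Lambda')\otimes L(\Lambda''))$, and the grading by powers of $\delta$ — i.e. the $d$-eigenvalue — coincides (up to an overall shift fixing the normalization) with the $-L_0^{\mathrm{coset}}$-grading, because $L_0^{\mathrm{coset}} = L_0^{(1)}+L_0^{(2)} - L_0^{\mathrm{diag}}$ and each $L_0$ acts as the (suitably normalized) $d$-action. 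Hence the multiplicity-generating function $\sum_{\lambda\in T_\Lambda^{\Lambda',\Lambda''}}\eps(v_{\Lambda,\Lambda'',\lambda})c_{\Lambda',\lambda}e^{S_{\Lambda,\Lambda'',\lambda}\delta}$ is, after the $e^\delta\mapsto q$ substitution, the graded character of the $\fg$-multiplicity space, which is a $\Vir$-module. Finally, unitarizability: $L(\Lambda')$ and $L(\Lambda'')$ are unitarizable integrable highest weight modules, so $L(\Lambda')\otimes L(\Lambda'')$ carries a positive definite contravariant Hermitian form; the coset Virasoro generators satisfy $(L_n v,w)=(v,L_{-n}w)$ with respect to this form (since the full Sugawara $L_n$ do and the diagonal ones do), so each multiplicity space is a unitarizable $\Vir$-module. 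That $\eps(v_{\Lambda,\Lambda'',\lambda})$ signs conspire to produce a genuine (nonnegative-coefficient) character is then automatic from this unitarity, even though it is not manifest term by term.

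The main obstacle I anticipate is the careful bookkeeping in the first step: matching the decomposition of $\lambda+\Lambda''+\rho$ under $W$ and $\bz\delta$-translation against the indexing set $T_\Lambda^{\Lambda',\Lambda''}$, checking that the map $\lambda\mapsto(\Lambda,v_{\Lambda,\Lambda'',\lambda},S_{\Lambda,\Lambda'',\lambda})$ is well defined and that no $\Lambda\notin\bar P_+^{(m)}$ contributes (this uses \eqref{eq1001}: the level and root-lattice congruence constraints), and confirming that the Weyl denominator cancellation is valid as an identity of formal characters in the completed group ring — one must make sure all the infinite sums involved lie in the appropriate completion where the Weyl--Kac formula holds. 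The second part, identifying the coset central charge and the $d$-grading with $L_0^{\mathrm{coset}}$, is standard (Goddard--Kent--Olive, Kac--Raina), so the only real care there is fixing normalizations so that the exponent $S_{\Lambda,\Lambda'',\lambda}$ matches the conformal weight grading up to the expected constant shift.
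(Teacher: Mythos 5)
Your proposal is correct and follows essentially the same route as the paper: the first part is the identical Weyl--Kac manipulation (multiply $\sum_{\lambda\in P^o(\Lambda')}c_{\Lambda',\lambda}e^\lambda$ by the numerator $\sum_w\eps(w)e^{w(\Lambda''+\rho)}$, use the $W$-invariance \eqref{e4.1} and the isotropy-cancellation \eqref{e14} to regroup over $\Lambda\in\bar P_+^{(m)}$, then divide by the Weyl denominator), and the second part is exactly the GKO coset argument that the paper invokes by citing [KR, Proposition 10.3].
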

\begin{proof}
By the Weyl-Kac character formula (cf. [K$_3$, Theorem 2.2.1]) and the identity \eqref{e13}, for any $\Lambda', \Lambda''\in P_+$, 
\begin{align*}
\left(\sum_{w\in W}\varepsilon(w)e^{w\rho}\right)&\cdot  ch\, L(\Lambda')\cdot ch\, L(\Lambda'')\\
&=\left(\sum_{\lambda\in P^o(\Lambda')}c_{\Lambda',\lambda}e^{\lambda}\right)\cdot\left(\sum_{w\in W}\varepsilon(w)e^{w(\Lambda''+\rho)}\right)\\
&=\sum_{\lambda\in P^o(\Lambda')}c_{\Lambda',\lambda}\sum_{w\in W}\varepsilon(w)e^{w(\Lambda''+\lambda+\rho)},
\,\, \text{by}\, \eqref{e4.1}\\
&=  \sum_{\Lambda\in \bar{P}_{+}^{(m)}}\sum_{\lambda\in T_{\Lambda}^{\Lambda',\Lambda''}}c_{\Lambda',\lambda}\sum_{w\in W}\varepsilon(w)e^{w(v_{\Lambda,\Lambda'',\lambda}(\Lambda+\rho))+S_{\Lambda,\Lambda'',\lambda}\delta},\,\,\text{by}\, \eqref{e14}\\
 &= \sum_{\Lambda\in \bar{P}_{+}^{(m)}}\sum_{\lambda\in T_{\Lambda}^{\Lambda',\Lambda''}}c_{\Lambda',\lambda}\sum_{w\in W}\varepsilon(w)\varepsilon(v_{\Lambda,\Lambda'',\lambda})e^{w(\Lambda+\rho)}e^{S_{\Lambda,\Lambda'',\lambda}\delta}\\
 &=  \sum_{\Lambda\in \bar{P}_{+}^{(m)}}\sum_{w\in W}\varepsilon(w)e^{w(\Lambda+\rho)}\sum_{\lambda\in T_{\Lambda}^{\Lambda',\Lambda''}}\varepsilon(v_{\Lambda,\Lambda'',\lambda})c_{\Lambda',\lambda}e^{S_{\Lambda,\Lambda'',\lambda}\delta}.
\end{align*}
Thus,
\[ch\, \bigl(L(\Lambda')\otimes L(\Lambda'')\bigr) = \sum_{\Lambda\in \bar{P}_{+}^{(m)}}ch\, L(\Lambda)\bigl(\sum_{\lambda\in T_{\Lambda}^{\Lambda',\Lambda''}}\varepsilon(v_{\Lambda,\Lambda'',\lambda})c_{\Lambda',\lambda}e^{S_{\Lambda,\Lambda'',\lambda}\delta}\bigr).
\]
To prove the second part of the proposition, use [KR, Proposition 10.3].
This proves the proposition.
\end{proof}
\begin{remark} For an affine Kac-Moody Lie algebra $\fg$, if we consider the tensor product decomposition of $L(\Lambda')\otimes L(\Lambda'')$ with respect to the derived subalgebra $\fg'$ (i.e., without the $d$-action), then the components $L(\Lambda)$ are precisely of the form 
$\Lambda\in \Lambda'+\Lambda''+\overset{\circ}{Q}$, where $\overset{\circ}{Q}$ is the root lattice of $\overset{\circ}{\mathfrak{g}}$
(cf. [KW]). Thus, the determination of the eigen semigroup and the saturated eigen semigroup is fairly easy for $\fg'$.
\end{remark}

Let $\theta=\sum_{i=1}^\ell h_i\alpha_i$ be the highest root of $\overset{\circ}{\mathfrak{g}}$ (with respect to a choice of the positive roots), written as a linear combination of the simple roots $\{\alpha_1, \dots, \alpha_\ell\}$ of $\overset{\circ}{\mathfrak{g}}$. Let
\[ S:= \{\sum_{i=0}^\ell \, n_i\alpha_i: n_i\geq 0 \,\,\,\text{for any}\, i\,\,\,\text{and}\, 0\leq n_i< h_i \,\,\text{for some}\,\, 0\leq i\leq \ell \},\]
where $h_0:=1$.
\begin{prop} \label{prop4.1} Let $\fg$ be an untwisted affine Kac-Moody Lie algebra as above. Then, 
for any $\Lambda \in P_+$ with $\Lambda (c)>0$, 
\[P^o(\Lambda)_+= S(\Lambda)\cap P_+,\]
where $P^o(\Lambda)_+:=P^o(\Lambda)\cap P_+$ and
$S(\Lambda)=\{\Lambda-\beta: \beta \in S\}.$
\end{prop}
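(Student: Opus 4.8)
The plan is to reduce the identity to the standard description of the \emph{dominant} weights occurring in an integrable highest weight module, together with two elementary remarks about $\delta$. The first remark is that $\delta(\alpha_i^\vee)=0$ for every $0\le i\le\ell$ (immediate from the $W$-invariance of $\delta$); hence $\lambda\in P_+$ implies $\lambda+n\delta\in P_+$ for all $n\in\bz$. The second, and only substantial, input is that, since $\Lambda(c)>0$, a dominant weight $\mu$ belongs to $P(\Lambda)$ if and only if $\Lambda-\mu\in Q_+$ (see, e.g., [K$_3$, Chapter 2]); this is precisely where positivity of the level enters --- for $\Lambda(c)=0$ the module $L(\Lambda)$ is one dimensional and the statement fails. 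I will take this second fact as known. Write $\delta=\sum_{i=0}^\ell a_i\alpha_i$ with $a_0=1$ and $a_i=h_i$ for $1\le i\le\ell$, and $Q_+:=\sum_i\bz_+\alpha_i$; then, directly from the definition of $S$, an element $\beta=\sum_i n_i\alpha_i\in Q_+$ lies in $S$ exactly when $n_i<a_i$ for some $i$, i.e. exactly when $\beta-\delta\notin Q_+$.

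To prove $P^o(\Lambda)_+\subseteq S(\Lambda)\cap P_+$, take $\lambda\in P^o(\Lambda)_+$ and set $\beta:=\Lambda-\lambda$. Since $\lambda$ is a weight of the highest weight module $L(\Lambda)$, we have $\beta\in Q_+$. By $\delta$-maximality, $\lambda+\delta$ is not a weight of $L(\Lambda)$; but $\lambda+\delta=\Lambda-(\beta-\delta)$ is dominant, so the weight description forces $\beta-\delta\notin Q_+$. By the remark above, $\beta\in S$, hence $\lambda=\Lambda-\beta\in S(\Lambda)\cap P_+$.

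For the reverse inclusion $S(\Lambda)\cap P_+\subseteq P^o(\Lambda)_+$, take $\lambda=\Lambda-\beta\in P_+$ with $\beta=\sum_i n_i\alpha_i\in S$, so $\beta\in Q_+$ and $n_{i_0}<a_{i_0}$ for some index $i_0$. Since $\lambda$ is dominant and $\Lambda-\lambda=\beta\in Q_+$, the weight description gives $\lambda\in P(\Lambda)$. Now fix $n\ge1$: if $\lambda+n\delta=\Lambda-(\beta-n\delta)$ were a weight of $L(\Lambda)$, then, being dominant, it would require $\beta-n\delta\in Q_+$, i.e. $n_i\ge na_i$ for all $i$; since $n\ge1$ and each $a_i\ge1$, this gives $n_i\ge a_i$ for all $i$, contradicting $n_{i_0}<a_{i_0}$. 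Hence $\lambda+n\delta\notin P(\Lambda)$ for all $n\ge1$, so $\lambda\in P^o(\Lambda)$ and therefore $\lambda\in P^o(\Lambda)_+$.

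The only obstacle I anticipate is pinning down the exact citation for the weight description used in the second remark; beyond that the argument is the bookkeeping above, whose sole non-formal point is the monotonicity ``$\beta-n\delta\in Q_+\Rightarrow\beta-\delta\in Q_+$'', valid because every mark $a_i$ is $\ge1$. It is worth emphasizing that the hypothesis $\Lambda(c)>0$ is used exactly once, namely in that weight description.
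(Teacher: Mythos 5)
Your proof is correct and follows essentially the same route as the paper's: both directions reduce to the fact (which is [Kac, Proposition 12.5(a)]) that for $\Lambda(c)>0$ a dominant weight $\mu\in\Lambda+Q$ lies in $P(\Lambda)$ iff $\Lambda-\mu\in Q_+$, combined with the observation that $\beta\in S$ is equivalent to $\beta-\delta\notin Q_+$ (equivalently, some simple-root coefficient of $\beta-n\delta$ is negative for every $n\geq 1$). The bookkeeping matches the paper's proof step for step, so there is nothing further to flag.
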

\begin{proof}
Take $\lambda \in S(\Lambda)$. Then, for any $n\geq 1$, 
\[
\Lambda-(\lambda+n\delta)=\bigl(\sum_{i=0}^\ell\, n_{i}\alpha_{i}\bigr)-n\delta=(n_{0}-n)\alpha_{0}+ \sum_{i=1}^\ell\,(n_{i}-nh_{i})\alpha_{i},
\]
 since $\alpha_{0}:=\delta-\theta$. Now, the coefficient of some $\alpha_{i}$
in the above sum is  negative, for any positive $n$, since $\lambda \in S(\Lambda)$. Thus, 
 $\lambda+n\delta$
could not be a weight of $L(\Lambda)$  for any positive $n$. Therefore, if $\lambda \in P(\Lambda)\cap S(\Lambda)$, then   it  is $\delta$-maximal.

By [Kac, Proposition 12.5(a)], if $\Lambda(c)\neq 0$, then $ S(\Lambda) \cap P_+\subset P(\Lambda).$ Therefore,
 $ S(\Lambda) \cap P_+\subset P^o(\Lambda)_+.$

Conversely, take $\lambda \in  P^o(\Lambda)_+$. Then,  $\lambda \in  P(\Lambda) \cap P_+$ and 
$\lambda+\delta \notin P(\Lambda)$. Express $\lambda=\Lambda - n_0\alpha_0-\sum_{i=1}^\ell\, n_i\alpha_i$, for some 
$n_i\in \bz_+$. Then, 
\[\lambda+\delta=\Lambda - (n_0-1)\alpha_0-\sum_{i=1}^\ell\, (n_i-h_i)\alpha_i.\]
Again applying  [Kac, Proposition 12.5(a)], $\lambda+\delta\notin P(\Lambda)$ if and only if $\lambda+\delta\not\leq \Lambda $,
i.e., for some $0\leq i\leq \ell$, $n_i< h_i$. Thus, $\lambda \in S(\Lambda)$. This proves the proposition.
\end{proof}

\section{$A_{1}^{(1)}$ Case}

In this section, we consider  $\mathfrak{g}=\widehat{\mathfrak{sl}_{2}}=\left(\bigoplus_{n\in\mathbb{Z}}\mathbb{C}t^{n}\otimes
\mathfrak{sl}_{2}\right)\oplus\mathbb{C}c\oplus\bc d$.
In this case $\mathfrak{h}^{*}=\mathbb{C}\alpha\oplus\mathbb{C}\delta\oplus\mathbb{C}\Lambda_{0}$, where 
$\alpha$ is the simple root of $\mathfrak{sl}_{2}$ and 
${\Lambda_0}_{|\overset{\circ}{\mathfrak{h}}\oplus \bc d}\equiv 0$ and $\Lambda_0(c)=1$. Then, $\Lambda_0$ is a zeroeth fundamental weight. 
The simple roots of $\widehat{\mathfrak{sl}_{2}}$ are 
 $\alpha_0:=\delta-\alpha$ and $\alpha_1:=\alpha$. The simple coroots are 
 $\alpha_0^\vee :=c-\alpha^\vee$ and $\alpha_1^\vee :=\alpha^\vee$. It is easy to see that an element of $\fh^*$ of the form $m\Lambda_0+\frac{j}{2}\alpha$ belongs to $P_+$ if and only if $m,j\in \bz_+$ and $m\geq j$.

Specializing Proposition \ref{prop4.1} to the case of $\mathfrak{g}=\widehat{\mathfrak{sl}_{2}}$, we get the following.

\begin{corollary} \label{cor5.1} For  $\mathfrak{g}=\widehat{\mathfrak{sl}_{2}}$
and  $\Lambda=m\Lambda_{0}+\frac{j}{2}\alpha\in P_+$, 
\beqn \label{e5.2}
P^o(\Lambda)_+=\left\{ \Lambda-k\alpha,\,\Lambda-l(\delta -\alpha)\;:\; k,l\in \bz_+ \,\,\text{and}\,\,  k\leq\frac{j}{2},\, l\leq\frac{m-j}{2}\right\}. 
\eeqn
\end{corollary}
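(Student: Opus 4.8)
The plan is to specialize Proposition \ref{prop4.1} directly to the case $\overset{\circ}{\mathfrak{g}} = \mathfrak{sl}_2$. Here $\ell = 1$, the highest root is $\theta = \alpha = \alpha_1$, so $h_1 = 1$ and $h_0 = 1$ by convention. Thus the set $S$ of Proposition \ref{prop4.1} consists of those $n_0\alpha_0 + n_1\alpha_1$ with $n_0, n_1 \geq 0$ and with $n_i < h_i = 1$ for some $i \in \{0,1\}$, i.e. with $n_0 = 0$ or $n_1 = 0$. Hence $S = \{k\alpha_1 : k \in \bz_+\} \sqcup \{l\alpha_0 : l \in \bz_+\}$, and since $\alpha_1 = \alpha$ and $\alpha_0 = \delta - \alpha$, we get $S(\Lambda) = \{\Lambda - k\alpha,\ \Lambda - l(\delta - \alpha) : k, l \in \bz_+\}$. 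By Proposition \ref{prop4.1}, since $\Lambda(c) = m > 0$, we have $P^o(\Lambda)_+ = S(\Lambda) \cap P_+$, so it only remains to intersect this explicit set with $P_+$.

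Next I would carry out the elementary computation of which elements of $S(\Lambda)$ lie in $P_+$. Recall from the discussion just before the corollary that $n\Lambda_0 + \frac{t}{2}\alpha \in P_+$ iff $n, t \in \bz_+$ and $n \geq t$; equivalently (pairing against the simple coroots $\alpha_1^\vee = \alpha^\vee$ and $\alpha_0^\vee = c - \alpha^\vee$), a weight $\nu$ with $\nu(d) = 0$ lies in $P_+$ iff $\nu(\alpha^\vee) \in \bz_+$ and $\Lambda(c) - \nu(\alpha^\vee) \in \bz_+$. For $\Lambda = m\Lambda_0 + \frac{j}{2}\alpha$ we have $\Lambda(\alpha^\vee) = j$ and $\Lambda(c) = m$. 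For $\nu = \Lambda - k\alpha$: $\nu(\alpha^\vee) = j - 2k$ and $\nu(c) = m$, so $\nu \in P_+$ iff $j - 2k \geq 0$ and $m - (j-2k) \geq 0$; since $m \geq j$ the second condition is automatic once $k \geq 0$, leaving $0 \leq k \leq j/2$. For $\nu = \Lambda - l(\delta - \alpha) = \Lambda + l\alpha - l\delta$: $\nu(\alpha^\vee) = j + 2l$ and $\nu(c) = m$, so $\nu \in P_+$ iff $j + 2l \geq 0$ (automatic) and $m - (j + 2l) \geq 0$, i.e. $0 \leq l \leq (m-j)/2$. This gives exactly the set in \eqref{e5.2}.

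There is essentially no serious obstacle here: the corollary is a direct specialization, and the only content beyond invoking Proposition \ref{prop4.1} is the bookkeeping of coroot pairings, which is routine. The one point to be careful about is the convention $h_0 = 1$ and the identity $\alpha_0 = \delta - \theta = \delta - \alpha$ in the $\widehat{\mathfrak{sl}_2}$ case, together with checking that the hypothesis $\Lambda(c) > 0$ of Proposition \ref{prop4.1} is being assumed (it is built into $\Lambda = m\Lambda_0 + \frac{j}{2}\alpha \in P_+$ only when $m > 0$; strictly one should note $m > 0$, as the statement intends). I would present the proof in two short paragraphs: first the specialization of $S$ and $S(\Lambda)$, then the intersection with $P_+$ via the two coroot inequalities.
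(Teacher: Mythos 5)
Your proposal is correct and follows exactly the paper's own route: the paper likewise deduces the corollary from Proposition \ref{prop4.1} together with the observation that $m_1\Lambda_0+\frac{m_2}{2}\alpha+m_3\delta\in P_+$ iff $m_1,m_2\in\bz_+$ and $m_1\geq m_2$; you have merely spelled out the routine specialization of $S$ and the coroot inequalities in more detail. Your side remark about the $\Lambda(c)>0$ hypothesis is a fair (and harmless) observation, since the $m=0$ case is trivial.
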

\begin{proof} 
The corollary  follows from Proposition \ref{prop4.1} since $m_1\Lambda_0+\frac{m_2}{2}\alpha +
 m_3 \delta $ belongs to $P_+$ if and only if $m_1,m_2\in \bz_+$ and $m_1\geq m_2$.
\end{proof}

Let $\pi$ be the projection  $\mathfrak{h}^{*}=\mathbb{C}\Lambda_{0}\oplus \bc \alpha\oplus\mathbb{C}\delta
\to \mathbb{C}\Lambda_{0}\oplus \bc \alpha$. 
\begin{lemma} \label{lemma5.1} Let $\mathfrak{g}=\widehat{\mathfrak{sl}_{2}}$. For $\Lambda=m\Lambda_{0}+\frac{j}{2}\alpha \in P_+$ (i.e.,  
$m,j\in \bz_+$ and $m\geq j$) such that $m>0$,
\beqn\label{e5.1} \pi(P^o(\Lambda))=\{\Lambda+k\alpha: k\in \bz\}.
\eeqn
Moreover, for any $k\in \bz$, let $n_k$ be the unique integer such that  $\Lambda+k\alpha+n_k\delta\in P^o(\Lambda)$. Then, writing
$k=qm+r, 0\leq r<m$, we have:
\beqn\label{ne18}
n_k=
n_r-q(k+r+j).
\eeqn
\end{lemma}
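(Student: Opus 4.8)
The plan is to combine the $W$-stability of $P^o(\Lambda)$ with the explicit list of dominant $\delta$-maximal weights furnished by Corollary~\ref{cor5.1}. Since every weight of $L(\Lambda)$ lies in $\Lambda+Q=\Lambda+\bz\alpha+\bz\delta$ and is $\le\Lambda$, each weight has the form $\Lambda+k\alpha+n\delta$ with $n\le 0$; consequently, on a fixed ``string'' $\{\Lambda+k\alpha+n\delta:n\in\bz\}$ the set of $n$ for which $\Lambda+k\alpha+n\delta$ is a weight is bounded above, and if it is nonempty its maximum is the \emph{unique} $\delta$-maximal weight on that string (uniqueness holds because a weight lying strictly above a $\delta$-maximal one contradicts $\delta$-maximality). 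So it is enough to prove that \emph{every} string $k\in\bz$ carries a weight of $L(\Lambda)$: this simultaneously gives $\pi(P^o(\Lambda))=\{\Lambda+k\alpha:k\in\bz\}$ and the existence and uniqueness of $n_k$.

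First I would compute the simple reflections in these coordinates. Using $\langle\Lambda,\alpha_1^\vee\rangle=j$, $\langle\Lambda,\alpha_0^\vee\rangle=m-j$, $\alpha_1=\alpha$, $\alpha_0=\delta-\alpha$, $\langle\alpha,\alpha^\vee\rangle=2$ and $\langle\delta,\alpha_i^\vee\rangle=\langle\alpha,c\rangle=0$, a direct computation yields
\[
s_1(\Lambda+k\alpha+n\delta)=\Lambda-(k+j)\alpha+n\delta,\qquad
s_0(\Lambda+k\alpha+n\delta)=\Lambda+(m-j-k)\alpha+(n+2k+j-m)\delta .
\]
Hence $\pi$ intertwines $s_1,s_0$ with the affine maps $\bar s_1\colon k\mapsto -k-j$ and $\bar s_0\colon k\mapsto m-j-k$ of $\bz$, so $\bar s_1\bar s_0$ is translation by $-m$. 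Applying the $W$-action to the weights of Corollary~\ref{cor5.1}, whose $\pi$-images are $\Lambda+k\alpha$ with $k$ ranging over the block $D=\{-\lfloor j/2\rfloor,\dots,\lfloor (m-j)/2\rfloor\}$, and using $W$-stability of $P^o(\Lambda)$, one gets $\pi(P^o(\Lambda))\supseteq\bigl(D\cup\bar s_1(D)\bigr)+m\bz$. A short check shows that $D$ and $\bar s_1(D)=\{-\lfloor(m-j)/2\rfloor-j,\dots,-\lceil j/2\rceil\}$ are abutting or overlapping blocks of consecutive integers whose union has $m$ or $m+1$ elements, so it meets every residue class mod $m$; therefore $\pi(P^o(\Lambda))=\{\Lambda+k\alpha:k\in\bz\}$, which is the first assertion.

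For the recursion I would feed $s_1s_0$ into the same computation: it sends $(k,n)\mapsto(k-m,\,n+2k+j-m)$, so applying it to the $\delta$-maximal weight $\Lambda+k\alpha+n_k\delta$ and invoking uniqueness on the string $k-m$ gives $n_{k-m}=n_k+2k+j-m$, equivalently $n_{k+m}-n_k=-2k-m-j$ for every $k\in\bz$. Writing $k=qm+r$ with $0\le r<m$ and telescoping,
\[
n_{qm+r}-n_r=-\sum_{p=0}^{q-1}\bigl(2pm+2r+m+j\bigr)=-q\bigl(qm+2r+j\bigr)
\]
for $q\ge0$, and the identical identity for $q<0$; since $qm=k-r$ this reads $n_k=n_r-q(k+r+j)$, the desired formula.

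I expect the one genuinely fiddly point to be the residue-covering step in the second paragraph: one must verify that the floors in $D$ and in $\bar s_1(D)$ interlock correctly so that their union really is an interval of length $\ge m$, which needs a brief case split on the parities of $j$ and of $m-j$. Everything else---the reflection formulas and the telescoping---is a routine unwinding whose shape is already pinned down above.
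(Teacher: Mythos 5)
Your proof is correct and follows essentially the same route as the paper: both arguments rest on the $W$-stability of $P^o(\Lambda)$, the explicit list of dominant $\delta$-maximal weights from Corollary \ref{cor5.1}, and the affine translation action (the paper applies $T_{-q\alpha^\vee}$ via the formula \eqref{e5.3}, whereas you realize the same translation as $s_1s_0$ and spell out the residue-covering and telescoping steps that the paper leaves implicit).
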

\begin{proof} The assertion \eqref{e5.1} follows from the identity \eqref{e5.2} together with the action of the affine Weyl group 
$W\simeq \overset{\circ}{W}\times (\bz\alpha^\vee)$ on $\fh^*$, where $\overset{\circ}{W}$ is the Weyl group of $\mathfrak{sl}_{2}$
and $\bz\alpha^\vee$ acts on $\fh^*$ via:
\beqn\label{e5.3} T_{n\alpha^\vee}(\mu)=\mu+ n\mu(c)\alpha-[n\mu(\alpha^\vee)+n^2\mu(c)]\delta,\,\,\,\text{for}\, n\in \bz, \mu\in \fh^*.
\eeqn
Since $P^o(\Lambda)$ is $W$-stable, the identity \eqref{ne18} can be established from the action of the affine Weyl group 
element $T_{-q\alpha^\vee}$ on $\Lambda+k\alpha+n_k\delta$. 
\end{proof}
The value of $n_r$ for $0\leq r<m$ can be determined  from the identity \eqref{e5.2}
by applying $T_{\alpha^\vee}, T_{\alpha^\vee}\cdot s_1$ to $\Lambda-k\alpha$ and applying 
$1, T_{\alpha^\vee}\cdot s_1$ to $\Lambda-l(\delta-\alpha)$, where $s_1$ is the nontrivial element of $\overset{\circ}{W}$. We record 
the result in the following lemma.
\begin{lemma}\label{lemma5.2'} With the notation as in the above lemma, the value of $n_r$ for any integer $0\leq r<m$ is given by 
\[
n_r=\begin{cases}
-r\,,& \text{for}\; 0\leq r\leq m-j\\
m-j-2r & \text{for}\; m-j\leq r <m.
\end{cases}\]
\end{lemma}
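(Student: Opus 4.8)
The plan is to exploit the $W$-stability of $P^o(\Lambda)$: since the affine Weyl group $W\simeq\overset{\circ}{W}\ltimes(\bz\alpha^\vee)$ permutes the $\delta$-maximal weights of $L(\Lambda)$, I would take the dominant $\delta$-maximal weights already listed in \eqref{e5.2} — namely $\Lambda-k\alpha$ with $0\leq k\leq j/2$ and $\Lambda-l(\delta-\alpha)$ with $0\leq l\leq (m-j)/2$, all of which lie in $P^o(\Lambda)$ with zero $\delta$-correction — apply to them a short explicit list of Weyl group elements, and compute the transformed weights using the translation formula \eqref{e5.3} together with $s_1(\mu)=\mu-\mu(\alpha^\vee)\alpha$. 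Reading off the $\pi$-image $\Lambda+r\alpha$ and the $\delta$-coefficient of each transform then gives $n_r$ for that $r$, because the transform still lies in $P^o(\Lambda)$ and, by Lemma \ref{lemma5.1}, $n_r$ is the \emph{unique} integer with $\Lambda+r\alpha+n_r\delta\in P^o(\Lambda)$.

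Following the recipe indicated just before the statement: applying the identity and $T_{\alpha^\vee}s_1$ to the weights $\Lambda-l(\delta-\alpha)$ yields $\delta$-maximal weights with $\pi$-images $\Lambda+l\alpha$ and $\Lambda+(m-j-l)\alpha$ respectively, each having $\delta$-coefficient equal to minus its $\alpha$-coefficient; letting $l$ run over $0\leq l\leq(m-j)/2$, these $\pi$-images sweep out every $\Lambda+r\alpha$ with $0\leq r\leq m-j$ and give $n_r=-r$ on that range. Applying $T_{\alpha^\vee}$ and $T_{\alpha^\vee}s_1$ to the weights $\Lambda-k\alpha$ yields $\delta$-maximal weights with $\pi$-images $\Lambda+(m-k)\alpha$ and $\Lambda+(m-j+k)\alpha$, whose $\delta$-coefficients are $2k-m-j$ and $j-m-2k$; letting $k$ run over $0\leq k\leq j/2$, these cover every $\Lambda+r\alpha$ with $m-j\leq r<m$, and in both cases the $\delta$-coefficient simplifies (via $k=m-r$, resp. $k=r-m+j$) to $m-j-2r$. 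The two cases agree at $r=m-j$, so taken together they produce exactly the claimed piecewise formula for all $0\leq r<m$, and uniqueness of $n_r$ closes the argument.

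The algebra here is purely mechanical (substitution into \eqref{e5.3}, using $\Lambda(\alpha^\vee)=j$, $\Lambda(c)=m$, $\alpha(\alpha^\vee)=2$, and $\delta(\alpha^\vee)=\alpha(c)=\delta(c)=0$), so the only point needing genuine care — and the main obstacle such as it is — is the combinatorial bookkeeping that guarantees no residue $r$ is missed: one checks that $\{m-k,\,m-j+k:0\leq k\leq\lfloor j/2\rfloor\}$ exhausts $\{m-j,\dots,m-1\}$ and $\{l,\,m-j-l:0\leq l\leq\lfloor(m-j)/2\rfloor\}$ exhausts $\{0,\dots,m-j\}$, treating the parities of $j$ and of $m-j$ and the degenerate cases $j=0$ and $j=m$ separately. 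Once this covering is confirmed, there is no further difficulty.
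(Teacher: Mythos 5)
Your proposal is correct and follows exactly the route the paper indicates (the paper gives no written proof beyond the sentence preceding the lemma, which prescribes applying $1,T_{\alpha^\vee}s_1$ to $\Lambda-l(\delta-\alpha)$ and $T_{\alpha^\vee},T_{\alpha^\vee}s_1$ to $\Lambda-k\alpha$); your explicit computations with \eqref{e5.3}, the resulting $\delta$-coefficients, and the covering of all residues $0\leq r<m$ all check out. Nothing further is needed.
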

\begin{lemma} \label{lemma5.1'} Take the following elements in $P_+$:
\[\Lambda=m\Lambda_{0}+\frac{j}{2}\alpha,\;\Lambda'=m'\Lambda_{0}+\frac{j'}{2}\alpha,\;\Lambda''=m''\Lambda_{0}+\frac{j''}{2}\alpha,
\]
where $m:=m'+m''$ and we assume that $m'>0$. 
Then,
\begin{align*}
\pi\left(T_{\Lambda}^{\Lambda',\,\Lambda''}\right)=\{ \Lambda'+k\alpha\;:\; k\in\mathbb{Z},\, & k\equiv\frac{1}{2}\left(j-j'-j''\right) \\
&\text{or}\,\,  k\equiv -\frac{1}{2}\left(j+j'+j''\right)-1\,\text{mod}\,M\} ,
\end{align*}
where $M:=m+2$.  In particular, by the equation \eqref{eq1001}, $T_{\Lambda}^{\Lambda',\,\Lambda''}$ is nonempty if and
only if $\frac{j-j'-j''}{2}\in \bz$. 

Moreover, for 
$\lambda=\Lambda'+k\alpha+n_k\delta \in T_{\Lambda}^{\Lambda',\Lambda''}$,
\[
v_{\Lambda,\Lambda'',\,\lambda}=\begin{cases}
T_{\frac{k-\frac{1}{2}\left(j-j'-j''\right)}{M}\alpha^\vee}, & \text{if}\; k\equiv\frac{1}{2}\left(j-j'-j''\right)\,\mod\, M\\
s_{1}T_{-\frac{k+\frac{1}{2}\left(j+j'+j''\right)+1}{M}\alpha^\vee },& \text{if}\; k\equiv-\frac{1}{2}\left(j+j'+j''\right)-1\,\text{mod}\, M,
\end{cases}
\]
where $T_{n\alpha^\vee}$ is defined by the equation \eqref{e5.3}. Further,
\[
 S_{\Lambda,\Lambda'',\lambda}=n_k+\frac{\left(k-\frac{1}{2}\left(j-j'-j''\right)\right)\left(k+\frac{1}{2}\left(j+j'+j''\right)+1\right)}{M}.
\]
\end{lemma}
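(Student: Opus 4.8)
The plan is to make the membership condition for $T_{\Lambda}^{\Lambda',\Lambda''}$ completely explicit by parametrizing $P^o(\Lambda')$ through the integer appearing in Lemma \ref{lemma5.1}. Since $m'>0$, that lemma gives a bijection $k\mapsto \lambda_k:=\Lambda'+k\alpha+n_k\delta$ from $\bz$ onto $P^o(\Lambda')$. First I would substitute $\lambda=\lambda_k$ into $\lambda+\Lambda''+\rho$ and, using $\Lambda'+\Lambda''-\Lambda=\frac{1}{2}(j'+j''-j)\alpha$, rewrite it as
\[
\lambda_k+\Lambda''+\rho=(\Lambda+\rho)+k'\alpha+n_k\delta,\qquad k':=k-\frac{1}{2}(j-j'-j'').
\]
Thus $\lambda_k\in T_{\Lambda}^{\Lambda',\Lambda''}$ precisely when $(\Lambda+\rho)+k'\alpha$ lies in $W(\Lambda+\rho)+\bz\delta$, and in that case $v_{\Lambda,\Lambda'',\lambda}$ and $S_{\Lambda,\Lambda'',\lambda}$ are the data realizing this equality (unique, since $\Lambda+\rho+n\delta\in P_{++}$, as observed just before the lemma).

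The main step is a direct computation in the $W$-orbit of $\Lambda+\rho$, using $W=\overset{\circ}{W}\ltimes(\bz\alpha^\vee)$, the translation formula \eqref{e5.3}, and the values $(\Lambda+\rho)(c)=m+2=M$ and $(\Lambda+\rho)(\alpha^\vee)=j+1$. Applying \eqref{e5.3} and then the two elements $1,s_1$ of $\overset{\circ}{W}$ gives
\[
T_{n\alpha^\vee}(\Lambda+\rho)=(\Lambda+\rho)+nM\alpha-[n(j+1)+n^2M]\delta,
\]
\[
s_1T_{n\alpha^\vee}(\Lambda+\rho)=(\Lambda+\rho)-(j+1+nM)\alpha-[n(j+1)+n^2M]\delta.
\]
Comparing $\alpha$-coefficients shows that the $\alpha$-shifts occurring in $W(\Lambda+\rho)$ modulo $\bz\delta$ are exactly $M\bz\cup(-(j+1)+M\bz)$; hence $\lambda_k\in T_{\Lambda}^{\Lambda',\Lambda''}$ iff $k'\equiv 0$ or $k'\equiv -(j+1)\pmod M$, which unwinds to the two congruences on $k$ in the statement and yields the asserted description of $\pi(T_{\Lambda}^{\Lambda',\Lambda''})$. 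For the nonemptiness criterion, by \eqref{eq1001} one needs $\Lambda'+\Lambda''-\Lambda=\frac{1}{2}(j'+j''-j)\alpha\in Q=\bz\alpha\oplus\bz\delta$, i.e. $\frac{1}{2}(j-j'-j'')\in\bz$; conversely that integrality makes both congruences solvable in $k$, so $T_{\Lambda}^{\Lambda',\Lambda''}\neq\emptyset$.

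Finally I would read $v$ and $S$ off the two displayed identities: for $k'\equiv 0$ one gets $v_{\Lambda,\Lambda'',\lambda}=T_{n\alpha^\vee}$ with $n=k'/M$, and for $k'\equiv -(j+1)$ one gets $v_{\Lambda,\Lambda'',\lambda}=s_1T_{n\alpha^\vee}$ with $n=-((j+1)+k')/M$; substituting $k'=k-\frac{1}{2}(j-j'-j'')$ gives the two cases as stated. Comparing $\delta$-coefficients gives $S_{\Lambda,\Lambda'',\lambda}=n_k+n(j+1)+n^2M$ in both cases, and plugging in the respective $n$ and using the identity $k'+j+1=k+\frac{1}{2}(j+j'+j'')+1$ collapses this to $n_k+\frac{1}{M}\bigl(k-\frac{1}{2}(j-j'-j'')\bigr)\bigl(k+\frac{1}{2}(j+j'+j'')+1\bigr)$, as claimed. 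I expect the only real nuisance to be the bookkeeping of the $\delta$-components through the affine translations — in particular, checking that the final formulas do not depend on the choice of $\rho$, which is fixed only modulo $\bc\delta$ (this holds because $\delta$ is $W$-invariant, so replacing $\rho$ by $\rho+c_0\delta$ shifts both sides of the defining equation by $c_0\delta$) — together with confirming that the two a priori different expressions for $S$ indeed coincide after simplification.
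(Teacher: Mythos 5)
Your proof is correct and follows exactly the route the paper intends: its entire proof of this lemma is the one-line remark that it ``follows from $W=\overset{\circ}{W}\rtimes\mathbb{Z}\alpha^\vee$ and $\rho=2\Lambda_0+\frac{1}{2}\alpha$,'' and your argument is precisely that computation carried out in full (orbit of $\Lambda+\rho$ under $T_{n\alpha^\vee}$ and $s_1T_{n\alpha^\vee}$, then matching $\alpha$- and $\delta$-coefficients). The details you supply, including the verification that the two expressions for $S_{\Lambda,\Lambda'',\lambda}$ coincide, all check out.
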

\begin{proof}
Follows from the fact that $W=\stackrel{\circ}{W}\rtimes\mathbb{Z}\alpha^\vee$
and that $\rho=2\Lambda_{0}+\frac{1}{2}\alpha$.\end{proof}
We have the following very crucial result.
\begin{prop}\label{maximum}
Fix $\Lambda, \Lambda'$ and $\Lambda''$ as in Lemma \ref{lemma5.1'} and asume that $\frac{j-j'-j''}{2}\in \bz$ and both of $m',m''>0$. Then, the maximum of 
$\left\{ S_{\Lambda,\Lambda'',\lambda}:\;\lambda\in T_{\Lambda}^{\Lambda',\,\Lambda''}\,\,\text{and}\, \,\varepsilon(v_{\Lambda,\Lambda'',\lambda})=1\right\} $
 is achieved precisely when $\pi(\lambda)=\Lambda'+\frac{1}{2}\left(j-j'-j''\right)\alpha$. 
\end{prop}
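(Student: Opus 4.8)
The plan is to turn the statement into an elementary inequality in one integer variable and then analyze it with the explicit description of the $\delta$‑maximal weights of $L(\Lambda')$ from Lemmas \ref{lemma5.1} and \ref{lemma5.2'}. Set $a:=\tfrac12(j-j'-j'')$ (an integer, by hypothesis), $b:=\tfrac12(j+j'+j'')+1$ and $M:=m+2=m'+m''+2$, so $a+b=j+1$. By Lemma \ref{lemma5.1'}, $\pi(T_\Lambda^{\Lambda',\Lambda''})$ is the union of the residue classes $k\equiv a$ and $k\equiv -b$ modulo $M$, and these classes are distinct (else $j+1=a+b\equiv 0\ (\mathrm{mod}\ M)$, impossible since $1\le j+1\le m+1<M$). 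In $\widehat{\mathfrak{sl}_{2}}$ every translation $T_{n\alpha^\vee}$ has even length, so $\varepsilon(T_{n\alpha^\vee})=1$ while $\varepsilon(s_1T_{n\alpha^\vee})=-1$; hence, by the formula for $v_{\Lambda,\Lambda'',\lambda}$ in Lemma \ref{lemma5.1'}, the condition $\varepsilon(v_{\Lambda,\Lambda'',\lambda})=1$ picks out precisely the class $k\equiv a$. Writing $k=a+tM$ with $t\in\bz$, the $S$‑formula of Lemma \ref{lemma5.1'} and $a+b=j+1$ give $S_{\Lambda,\Lambda'',\lambda}=n_k+t(j+1+tM)$, where $n_k$ is the integer attached to $\Lambda'$ in Lemma \ref{lemma5.1}. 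So the assertion is equivalent to: $\Phi(t):=\bigl(n_{a+tM}+t(j+1+tM)\bigr)-n_a\le 0$ for all $t\in\bz$, with equality only at $t=0$.

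Next I would pin down $n_k$ for $\Lambda'=m'\Lambda_0+\tfrac{j'}{2}\alpha$. From the $W$‑stability of $P^o(\Lambda')$ and the $T_{\alpha^\vee}$‑action \eqref{e5.3} (equivalently, from Lemma \ref{lemma5.1}) one gets $n_{k+m'}-n_k=-(2k+j'+m')$, so $P(k):=n_k+\tfrac1{m'}(k^2+j'k)$ is $m'$‑periodic; feeding in the explicit values of Lemma \ref{lemma5.2'} on $[0,m')$ yields $P(r)=\tfrac1{m'}r(r-(m'-j'))$ for $0\le r\le m'-j'$ and $P(r)=\tfrac1{m'}(r-m')(r-(m'-j'))$ for $m'-j'\le r\le m'$. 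In particular $P\le 0$ everywhere, $P(k)=0$ exactly for $k\equiv 0$ or $k\equiv m'-j'\ (\mathrm{mod}\ m')$, and $P$ is concave (downward‑parabolic) between consecutive zeros, hence $1$‑Lipschitz. Completing the square, $S(k)=n_k+\tfrac1M(k-a)(k+b)=-A(k-k^*)^2+C+P(k)$ with $A=\tfrac{m''+2}{m'M}>0$, $k^*=\tfrac{m'(j''+1)-j'(m''+2)}{2(m''+2)}$, $C$ constant; therefore $\Phi(t)=-G(t)+\bigl(P(a+tM)-P(a)\bigr)$ where $G(t):=A\bigl[(a+tM-k^*)^2-(a-k^*)^2\bigr]=\tfrac{m''+2}{m'}\,t\bigl(tM+2(a-k^*)\bigr)$, and $P(a+tM)=P(a+t(m''+2))$ by periodicity. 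A short computation with $0\le j\le m'+m''$, $0\le j'\le m'$, $0\le j''\le m''$, $m',m''\ge 1$ shows $M\pm 2(a-k^*)\ge \tfrac{m'}{m''+2}+2>0$; hence $|a-k^*|<M/2$, so $k=a$ is the unique point of $a+M\bz$ nearest $k^*$ and $G(t)>0$ for all $t\ne 0$.

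For $|t|\ge 2$ the parabolic gap dominates. From $M\pm 2(a-k^*)\ge \tfrac{m'}{m''+2}+2$ one gets $G(t)\ge \tfrac{(m''+2)M}{m'}|t|(|t|-1)+|t|$, while the $1$‑Lipschitz bound gives $P(a+tM)-P(a)\le |t|(m''+2)$; since $\tfrac{(m''+2)M}{m'}>m''+2$, the inequality $G(t)>|t|(m''+2)$ holds for $|t|\ge 2$, so $\Phi(t)<0$. It then only remains to treat $t=\pm1$. Using $n_{k\pm m'}=n_k\mp(2k+j'\pm m')$ to cancel the quadratic terms, one obtains the clean identities $\Phi(1)=n_{a+m''+2}-n_a-(m''+1-j'')$ and $\Phi(-1)=n_{a-m''-2}-n_a-(m''+j''+3)$, so one must show $n_{a+m''+2}-n_a\le m''-j''$ and $n_{a-m''-2}-n_a\le m''+j''+2$.

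The step I expect to be the main obstacle is exactly this last one: the crude bound $|P|\le m'/4$ is hopelessly lossy once $m'\gg m''$, so I cannot conclude $\Phi(\pm1)<0$ from the quadratic term alone. Instead I would exploit that $P(a\pm M)$ and $P(a)$ are values of the same $m'$‑periodic, piecewise‑concave function $P$ at points only $m''+2$ apart, combined with the confinement $a=\tfrac12(j-j'-j'')\in[-\tfrac{j'+j''}{2},\ \tfrac{(m'-j')+(m''-j'')}{2}]$ forced by $0\le j\le m'+m''$; this interval has length $m'+m''<2m'$ and is positioned so that $a$ cannot lie near a peak of $P$ while $a\pm(m''+2)$ lies near a zero of $P$ — which is precisely the configuration the crude estimates fail to exclude. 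Carrying this out will be a finite case analysis according to which "teeth" of $P$ contain $a$ and $a\pm(m''+2)$, using the explicit quadratic formulas for $P$ (equivalently for $n_r$, $0\le r<m'$) together with the listed ranges of $j,j',j''$; everything else in the proof is routine bookkeeping with Lemmas \ref{lemma5.1}, \ref{lemma5.2'} and \ref{lemma5.1'}.
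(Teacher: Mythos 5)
Your setup is sound and, up to a change of variables, parallels the paper's: your $P(k)=n_k+\tfrac1{m'}(k^2+j'k)$ is (a shift of) the periodic piecewise-quadratic function $P$ the paper introduces, your completed square $S(k)=-A(k-k^*)^2+C+P(k)$ is the paper's $F$, and your verified bounds $M\pm 2(a-k^*)\ge \tfrac{m'}{m''+2}+2$ together with the $1$-Lipschitz estimate on $P$ do dispose of $|t|\ge 2$ rigorously (arguably more cleanly than the paper, which instead proves $\Delta(t):=F(t+1)-F(t)$ is nonincreasing in $t$). Your identities $\Phi(1)=n_{a+m''+2}-n_a-(m''+1-j'')$ and $\Phi(-1)=n_{a-m''-2}-n_a-(m''+j''+3)$ check out. (One slip: each tooth of $P$ is an \emph{upward} parabola, nonpositive and vanishing at the tooth endpoints, so $P$ is piecewise \emph{convex}, not concave; this does not affect the bounds $P\le 0$ and $|P'|\le 1$ that you actually use.)

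The genuine gap is that you stop exactly at the heart of the proposition: the case $t=\pm1$, i.e.\ the inequalities $n_{a+m''+2}-n_a\le m''-j''$ and $n_{a-m''-2}-n_a\le m''+j''+2$ for all admissible $(a,j',m',m'',j'',j)$, is only announced as "a finite case analysis according to which teeth of $P$ contain $a$ and $a\pm(m''+2)$" and never carried out. You correctly diagnose that crude bounds on $P$ cannot close this (they are lossy when $m'\gg m''$) and that the positional constraint $a\in[-\tfrac{j'+j''}{2},\tfrac{(m'-j')+(m''-j'')}{2}]$ must be exploited, but diagnosing the obstacle is not the same as overcoming it; as written, the proof of the precise location of the maximum is incomplete. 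For comparison, the paper closes this step differently: it shows $\Delta(0)$ and $\Delta(-1)$ are monotone in the parameters $j''$, $j$, $m''$ (Claim \ref{claim1}), which reduces $\Delta(-1)>0$ and $\Delta(0)<0$ to the extremal values $m''=1$, $j''\in\{0,1\}$, $j\in\{0,m'+1\}$, and then checks the handful of possible tooth-index pairs $(k_1,k_0)$ directly. You would need either to carry out your advertised tooth-by-tooth analysis in full, or to import a parameter-monotonicity argument of this kind, before the proposal constitutes a proof.
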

\begin{proof} By Lemma \ref{lemma5.1'} and the explicit description of the length function of $T_{n\alpha^\vee}$ (cf. [K$_3$, Exercise 13.1.E.3]), 
$$\pi\{\lambda\in T_{\Lambda}^{\Lambda',\,\Lambda''}:\varepsilon (v_{\Lambda,\Lambda'',\lambda})=1\}=\{\Lambda'+k_l\alpha:l\in \bz\},$$
where $M:=m+2$ and $k_l:=\frac{j-j'-j''}{2}+lM$. 
Take  $\lambda=\Lambda'+k_l\alpha\in \pi(T_{\Lambda}^{\Lambda',\,\Lambda''})$ for $l\in \bz$.
 Write $k_l=q_lm'+r_l$ for $q_l\in \bz$ and $0\leq r_l< m'$. Then, by Lemmas \ref{lemma5.1}, \ref{lemma5.2'} and \ref{lemma5.1'}, for $\lambda=\Lambda'+k_l\alpha$ (setting $J:=\frac{j-j'-j''}{2}$), 
\begin{align*}
S_{\Lambda,\Lambda'',\lambda} & =  n_{r_l}-\frac{(J+j'+lM+r_l)(J+lM-r_l)}{m'}+l(lM+1+j)\\
&=l^2M(1-\frac{M}{m'})+l(1+j-\frac{M(j-j'')}{m'})-\frac{(j-j'')^2-j'^2}{4m'}+\frac{r_l^2}{m'}
+\frac{r_lj'}{m'}+n_{r_l}\\
&= l^2M(1-\frac{M}{m'})+l(1+j-\frac{M}{m'}(j-j''))-\frac{(j-j'')^2-j'^2}{4m'}+p(k_l),
\end{align*}
where 
\[
p(k_l):= \frac{r_l^2}{m'}+\frac{r_l}{m'}j' +n_{k_l}.
\]
Let $P=P_{m',j'}:\mathbb{R}\rightarrow\mathbb{R}$ be the following function:
\begin{eqnarray*}
 P(s) & := & \begin{cases}
\frac{(s-\frac{m'}{2}k)^{2}}{m'}-\frac{(j')^2}{4m'}, & \text{if}\;\left|s-\frac{m'}{2}k\right|\leq\frac{j'}{2}\;\text{for some }k\in2\mathbb{Z}\\
\frac{(s-\frac{m'}{2}k)^{2}}{m'}-\frac{(m'-j')^2}{4m'}, & \text{if}\;\left|s-\frac{m'}{2}k\right|\leq\frac{m'-j'}{2}\;\text{for some }k\in2\mathbb{Z}+1.
\end{cases}
\end{eqnarray*}
Let $k_s\in \bz$ be such a $k$. (Of course, $k_s$ depends upon $m'$ and $j'$.)
\begin{claim}
$P(s) = p (s-\frac{j'}{2})$ for $s \in \frac{j'}{2}+\mathbb{Z}$.
\end{claim}
\begin{proof}
Clearly, both of $P$ and $p$ are  periodic with period $m'$. So, it is enough to show that $P(s) = p (s-\frac{j'}{2})$, for $s-\frac{j'}{2}$
equal to any of the integral points of the interval $[-j', m'-j']$. By Lemma \ref{lemma5.2'} and the identity \eqref{ne18},  for
any integer $-j'\leq r\leq 0$,
$$p (r)= \frac{1}{m'}r(r+j'),$$
and for any integer $0\leq r\leq m'-j'$, 
$$p (r)= \frac{r(r+j')}{m'}-r.$$
From this, the claim follows immediately. 
\end{proof}
 Fix $m'>0$. Let 
\begin{align*}
I :=\{(t,j',m'',j'',j)\in\mathbb{R}^5 \,:\, &0\leq j' \leq m',\, 1 \leq m'',\\
 &0\leq j'' \leq m'',\, 0\leq j\leq m'+m''\}.
\end{align*}
Define $F: I \rightarrow \mathbb{R}$ by
\begin{eqnarray*}
F: (t,j',m'',j'',j) &\mapsto&   t^{2}M(1-\frac{M}{m'})+t\bigl(j(1-\frac{M}{m'})+1+\frac{M}{m'}j''\bigr)\\
&&+\frac{(j')^2-(j-j'')^{2}}{4m'} +P(\frac{1}{2}\left(j-j''\right)+tM).
\end{eqnarray*}
Thus, $F$ is a continuous, piecewise smooth function with failure of differentiability along the set $$\{(t,j',m'',j'',j)\in I\,:\, \frac{1}{2}(j\pm j'-j'')+tM\in m'\mathbb{Z} \}.$$
\begin{claim}\label{claim1}
Let $\Delta (t)=\Delta(t, j',m'',j'', j):=F(t+1,j',m'',j'',j)-F(t,j',m'',j'',j)$. Then, on $I$,
\begin{enumerate}
\item $\Delta$ is a nonincreasing function of $t$
\item $\Delta$ is increasing with respect to $j''$
\item $\Delta$ is nonincreasing in $j$
\item\label{delclaimb} $\Delta(0)$ is decreasing in $m''$
\item\label{delclaima} $\Delta(-1)$ is nondecreasing in $m''$.\end{enumerate}
\end{claim}

\begin{proof}
We compute and give bounds for the partial derivatives of $\Delta$, where they exist. 
\begin{eqnarray*}
\Delta(t) & =&\,2tM(1-\frac{M}{m'})+\bigl((j+M)(1-\frac{M}{m'})+1+\frac{M}{m'}j''\bigr)\\
&&+P(tM+M+\frac{1}{2}(j-j''))-P(tM+\frac{1}{2}(j-j'')).
\end{eqnarray*}
Hence, 
\begin{eqnarray*}
\partial_t\Delta(t) & = & 2M(1-\frac{M}{m'})+M\bigl(P'(tM+M+\frac{1}{2}(j-j''))-P'(tM+\frac{1}{2}(j-j''))\bigr)\\
 & = & 2M(1-\frac{M}{m'})+2\frac{M}{m'}(M-\frac{m'}{2}k_{1}+\frac{m'}{2}k_{0})\\
 & = & 2M(1-\frac{k_{1}-k_{0}}{2}),
\end{eqnarray*}
where $k_1:=k_{(t+1)M+\frac{1}{2}(j-j'')}$ and $k_0:=k_{tM+\frac{1}{2}(j-j'')}$.
Since $2\leq k_{1}-k_{0}$, we see that $\partial_t\Delta\leq0$, wherever $\partial_t \Delta$ exists. 
 Since $\Delta$ is continuous everywhere and differentiable 
on all but a discrete set, $\Delta$ is nonincreasing in $t$.
\[
\partial_{j''}\Delta(t)=\frac{M}{m'}-\frac{1}{2}\left(P'(tM+M+\frac{1}{2}(j-j''))-P'(tM+\frac{1}{2}(j-j''))\right).
\]
Now, $|P'|\leq1$, so $\frac{M}{m'}+1\geq\partial_{j''}\Delta\geq\frac{M}{m'}-1=\frac{m''+2}{m'}>0$.

For (3):
\begin{eqnarray*}
\partial_{j}\Delta(t) & = & 1-\frac{M}{m'}+\frac{1}{2}\bigl(P'(tM+M+\frac{1}{2}(j-j''))-P'(tM+\frac{1}{2}(j-j''))\bigr)\\
 & = & 1-\frac{M}{m'}+\frac{1}{m'}\left(M-\frac{m'}{2}k_{1}+\frac{m'}{2}k_{0}\right)\\
 & = & 1-\frac{k_{1}-k_{0}}{2}\leq 0.
\end{eqnarray*}
  (\ref{delclaimb}) and (\ref{delclaima}) follow from the following calculation:
\begin{align*}
\partial_{m''}\Delta=&2t(1-2\frac{M}{m'})+(1-2\frac{M}{m'}+\frac{1}{m'}(j''-j))\\
&+(t+1)P'(tM+M+\frac{1}{2}(j-j''))-tP'(tM+\frac{1}{2}(j-j'')).
\end{align*}
Hence, 
\begin{eqnarray*}
\partial_{m''}\Delta(0) & = & 1-2\frac{M}{m'}+\frac{1}{m'}(j''-j)+P'(M+\frac{1}{2}(j-j''))\\
 & \leq & 1-2\frac{M}{m'}+\frac{m''}{m'}+1\\
 & = & \frac{-m''-4}{m'}<0,
\end{eqnarray*}
and 
\begin{eqnarray*}
\partial_{m''}\Delta(-1) & = & -2(1-2\frac{M}{m'})+(1-2\frac{M}{m'}+\frac{1}{m'}(j''-j))+P'(-M+\frac{1}{2}(j-j''))\\
 & = & -1+2\frac{M}{m'}+\frac{1}{m'}(j''-j)+P'(-M+\frac{1}{2}(j-j''))\\
 & = & -1+2\frac{M}{m'}+\frac{1}{m'}(j''-j)-2\frac{M}{m'}+\frac{1}{m'}(j-j'')-k_{0}\\
 & = & -1-k_{0}.
\end{eqnarray*}
Note that $k_{0}\leq-1$ since $-\frac{(j-j'')}{2}-M<-\frac{m'}{2}$.
Thus, $\partial_{m''}\Delta(-1)\geq0$.\end{proof}
\begin{claim}
{\it The maximum of $F=F(-, j',m'',j'',j):\,\mathbb{Z}\rightarrow\mathbb{R}$ occurs at $0$.}
\end{claim}
\begin{proof}
We show that $\Delta(-1)>0 > \Delta(0)$. Since $\Delta$ is
nonincreasing in $t$, it would follow that $F(0)>F(t)$ for all $t\in\mathbb{Z}_{\neq0}$.

Let us begin with $\Delta(-1)$. By the previous claim \ref{claim1}, $\Delta(-1)$
is as small as possible when $m''=1$, $j''=0$, and $j=m'+1$. So,
let us compute with these values:

\begin{eqnarray*}
\Delta(-1) & \geq & \frac{6}{m'}+1+P(\frac{1}{2}m'+\frac{1}{2})-P(-2-\frac{1}{2}m'-\frac{1}{2})\\
 & = & \frac{6}{m'}+1+\frac{(\frac{1}{2}m'+\frac{1}{2}-\frac{1}{2}m'k_{1})^{2}}{m'}-\frac{(2+\frac{1}{2}m'+\frac{1}{2}+
\frac{1}{2}m'k_{0})^{2}}{m'}\\
&+&\begin{cases}
\frac{m'}{4}-\frac{j'}{2} & \text{ if }k_{0}\text{ odd},\, k_{1}\text{ even}\\
0 & \text{ if }k_{1}-k_{0}\text{ even}\\
\frac{j'}{2}-\frac{m'}{4} & \text{ if }k_{1}\text{ odd},\, k_{0}\text{ even}.
\end{cases}
\end{eqnarray*}
Note that for $m'\geq 5$, the possible values of $(k_{1},k_{0})$ are
$(1,-1)$; $(1,-2)$; or $(2,-2)$.   So,  the result, that $\Delta (-1)>0$,  is established
by considering such pairs directly and by cases for smaller $m'$.

For $\Delta(0)$, we take $m''=1$, $j''=1$, and $j=0$.
\begin{eqnarray*}
\Delta(0) & = & \bigl(\frac{-3(3+m')}{m'}+1+\frac{3+m'}{m'}\bigr)+P(\frac{1}{2}+2+m')-P(-\frac{1}{2})\\
 & =&1 - \frac{2(3+m')}{m'}+P(\frac{1}{2}+2+m')-P(-\frac{1}{2})\\
 & = & 1-\frac{2(3+m')}{m'}+\frac{(\frac{1}{2}+2+m'-\frac{1}{2}m'k_{1})^{2}}{m'}-\frac{(\frac{1}{2}+\frac{1}{2}m'k_{0})^{2}}{m'}
\\ &+&\begin{cases}
\frac{m'}{4}-\frac{j'}{2} & \text{ if }k_{0}\text{ odd},\, k_{1}\text{ even}\\
0 & \text{ if }k_{1}-k_{0}\text{ even}\\
\frac{j'}{2}-\frac{m'}{4} & \text{ if }k_{1}\text{ odd},\, k_{0}\text{ even}.
\end{cases}
\end{eqnarray*}
For $m'\geq 5$,  the possible values of $(k_{1}, k_{0})$ are
 $(3,-1)$; $(3,0)$; or $(2,0)$. So, again the result, that $\Delta (0)<0$, is established
by considering such pairs directly and by cases for smaller $m'$.
\end{proof}
 This completes the proof of the proposition.
\end{proof}
\begin{rem}\label{newremark}
We have shown that $F(l,j',m'',j'',j) = S_{\Lambda,\Lambda'',\lambda}$ for integral values of $l$. If $l$ is not an integer, then $\lambda_l := \Lambda' + (lM+J)\alpha$ may not be in $\pi(T_\Lambda^{\Lambda',\Lambda''})$, in which case $ S_{\Lambda,\Lambda'',\lambda_l}$ is not defined. On the other hand, if $\lambda_l \in \pi(T_\Lambda^{\Lambda',\Lambda''})$, we note that the equality  $F(l,j',m'',j'',j) = S_{\Lambda,\Lambda'',\lambda_l}$  holds, as can be seen by letting $k_l = lM -\frac{1}{2} (j+j'+j'')-1$ in the above proof.
\end{rem}

Now, let us apply the same analysis to the case that $\varepsilon(v_{\Lambda,\Lambda'',\lambda})=-1$.
By Lemma \ref{lemma5.1'}, this corresponds to $k_l=-\frac{1}{2}\left(j+j'+j''\right)-1+lM$.
 For 
$\lambda=\Lambda'+k_l\alpha$, let us denote the function $S_{\Lambda,\Lambda'',\lambda}$ by $G_\bz(l)=G_\bz(l, j',m'',j'',j)$. Thus,
$G_\bz:\bz \to \bz$. 
\begin{lemma} \label{lemma5.9} Define the function $G=G(-, j',m'',j'',j):\br \to \br$ by 
\[G(t, j',m'',j'',j)=F(t-\frac{j+1}{M}, j',m'',j'',j).\]
Then, $G_{|\bz}=G_\bz.$

Hence, $S_{\Lambda,\Lambda'',\lambda}$
has a maximum when $l=0$ or $l=1$.
\end{lemma}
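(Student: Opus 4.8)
The plan is to deduce the lemma from facts about $F$ and about its first difference $\Delta$ that are already in hand from the proof of Proposition \ref{maximum}, using that $G$ is by definition just the translate $t\mapsto F\bigl(t-\frac{j+1}{M},\,j',m'',j'',j\bigr)$ of $F$. Granting the identity $G_{|\bz}=G_\bz$, the statement about the maximum is essentially free: $\Delta$ is nonincreasing (Claim \ref{claim1}(1)) and changes sign between $t=-1$ and $t=0$ (the estimates $\Delta(-1)>0>\Delta(0)$ from the proof of Proposition \ref{maximum}), and the translation amount $\frac{j+1}{M}$ lies in $(0,1)$, so these endpoints are straddled by the integer points $l-\frac{j+1}{M}$ in exactly the way one wants. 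The delicate point — the real content of the lemma — is the first identity: one has to check that this specific translation amount is precisely what converts the $\varepsilon=+1$ parametrization of $\delta$-maximal weights underlying $F$ into the $\varepsilon=-1$ parametrization underlying $G_\bz$, and that Remark \ref{newremark} is applicable at the relevant non-integral argument.

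For the identity $G_{|\bz}=G_\bz$, fix $l\in\bz$, set $J:=\frac{j-j'-j''}{2}$ and $s:=l-\frac{j+1}{M}$, so $G(l)=F(s)$ by definition. A direct computation gives
\[
sM+J=lM-(j+1)+\frac{j-j'-j''}{2}=lM-\frac{j+j'+j''}{2}-1=k_l,
\]
so the weight $\lambda_s=\Lambda'+(sM+J)\alpha$ of Remark \ref{newremark} coincides with $\Lambda'+k_l\alpha$. Since $k_l\equiv-\frac{j+j'+j''}{2}-1\pmod M$, Lemma \ref{lemma5.1'} shows (under our standing assumption $\frac{j-j'-j''}{2}\in\bz$, so that $T_\Lambda^{\Lambda',\Lambda''}\ne\emptyset$) that $\lambda_s\in\pi(T_\Lambda^{\Lambda',\Lambda''})$, with $\varepsilon(v_{\Lambda,\Lambda'',\lambda_s})=-1$. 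Hence Remark \ref{newremark} applies at the argument $s$ and yields $F(s)=S_{\Lambda,\Lambda'',\lambda_s}=G_\bz(l)$; that is, $G(l)=G_\bz(l)$ for every $l\in\bz$.

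For the maximum, put $\Delta_G(t):=G(t+1)-G(t)=\Delta\bigl(t-\frac{j+1}{M}\bigr)$, which is nonincreasing on $\br$ by Claim \ref{claim1}(1). Because $0\le j\le m'+m''=M-2$ we have $0<\frac{j+1}{M}<1$, hence $l-\frac{j+1}{M}<-1$ for every integer $l\le-1$ and $l-\frac{j+1}{M}>0$ for every integer $l\ge1$; combined with $\Delta(-1)>0>\Delta(0)$ and the monotonicity of $\Delta$, this gives $\Delta_G(l)>0$ for $l\le-1$ and $\Delta_G(l)<0$ for $l\ge1$. Thus $G_\bz$ is strictly increasing on $\bz_{\le0}$ and strictly decreasing on $\bz_{\ge1}$, so $\max_{l\in\bz}G_\bz(l)=\max\{G_\bz(0),G_\bz(1)\}$, which proves the final assertion.
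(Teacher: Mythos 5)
Your proof is correct and follows essentially the same route as the paper: the identity $G_{|\bz}=G_\bz$ is obtained, exactly as in the paper's proof, by noting that $sM+J=k_l$ for $s=l-\frac{j+1}{M}$ and invoking Remark \ref{newremark}, and the claim about the maximum then follows from the monotonicity of $\Delta$ and the sign change $\Delta(-1)>0>\Delta(0)$ established in the proof of Proposition \ref{maximum}. You spell out the membership check $\lambda_s\in\pi(T_\Lambda^{\Lambda',\Lambda''})$ and the final monotonicity argument more explicitly than the paper does, but the underlying argument is the same.
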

\begin{proof}
By the proof of Proposition \ref{maximum} and Remark \ref{newremark}, $S_{\Lambda,\Lambda'',\lambda+(j+1)\alpha} = F(l)$, for $\lambda=\Lambda'+k_l\alpha$. Since $\lambda = \Lambda' +(-\frac{1}{2}\left(j+j'+j''\right)-1+lM)\alpha$, by Proposition \ref{maximum}, $S_{\Lambda,\Lambda'',\lambda} = F(l-\frac{j+1}{M})$.
 This proves the lemma.
\end{proof}

From Lmma \ref{lemma5.9} and the definition of $F$, it is easy to see that
\begin{equation}\label{eq22}
G(1-t,m'-j',m'',m''-j'',m'+m''-j)+\frac{1}{2}(j'+j''-j)=G(t, j',m'', j'',j),
\end{equation}
for any $t\in \br$.
Hence, if the maximum of $G_\bz$ occurs at
1, it is equal to 
\begin{equation}\label{eq23}
G(0, m'-j',m'',m''-j'',m'+m''-j)+\frac{1}{2}(j'+j''-j).
\end{equation}
We also record the following identity, which is easy to prove from the definition of $F$.
\begin{equation}\label{neq23}
F (0,m'-j',m'', m''-j'',m'+m''-j)+\frac{1}{2}(j'+j''-j)=F(0,j',m'', j'',j).
\end{equation}

As a  corollary of Proposition \ref{maximum} and Lemma \ref{lemma5.9}, we get the following `Non-Cancellation Lemma'.
\begin{corollary}\label{corcancel} Let $\Lambda, \Lambda',\Lambda''$ be  as in Proposition \ref{maximum} and let
\begin{align*}
\mu^{\Lambda',\Lambda''}_{\Lambda}&:=\max\left\{ S_{\Lambda,\Lambda'',\lambda}:\;\lambda\in T_{\Lambda}^{\Lambda',\Lambda''}\,\,\,\text{and}\,\,\varepsilon(v_{\Lambda,\Lambda'',\lambda})=1\right\},\\
\bar{\mu}^{\Lambda',\Lambda''}_{\Lambda}&:=\max\left\{ S_{\Lambda,\Lambda'',\lambda}  :\;\lambda\in T_{\Lambda}^{\Lambda',\Lambda''}\,\,\,\text{and}\,\,\varepsilon(v_{\Lambda,\Lambda'',\lambda})=
-1\right\}. 
\end{align*}
Assume that ${\mu}^{\Lambda',\Lambda''}_{\Lambda}=\bar{\mu}^{\Lambda',\Lambda''}_{\Lambda}$. Then,
\[{\mu}^{\Lambda'',\Lambda'}_{\Lambda}\neq\bar{\mu}^{\Lambda'',\Lambda'}_{\Lambda}.\]
\end{corollary}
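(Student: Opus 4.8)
The plan is to exploit the symmetry relations established in the run-up to the corollary, particularly the identities \eqref{eq22} and \eqref{neq23}, together with the fact that the relevant maxima are governed by the piecewise-smooth function $F$ (for the $\varepsilon=1$ branch) and by $G(t)=F(t-\tfrac{j+1}{M})$ (for the $\varepsilon=-1$ branch). By Proposition \ref{maximum} the maximum of the $\varepsilon=1$ branch for the pair $(\Lambda',\Lambda'')$ is $F(0,j',m'',j'',j)$, and by Lemma \ref{lemma5.9} the maximum of the $\varepsilon=-1$ branch is $\max\{G(0),G(1)\}=\max\{F(-\tfrac{j+1}{M}),\,F(1-\tfrac{j+1}{M})\}$. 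First I would translate the hypothesis $\mu^{\Lambda',\Lambda''}_\Lambda=\bar\mu^{\Lambda',\Lambda''}_\Lambda$ into the assertion that the value $F(0)$ of the integer-maximum of $F$ is \emph{also} attained at one of the two non-integer points $t=-\tfrac{j+1}{M}$ or $t=1-\tfrac{j+1}{M}$ (note $0<\tfrac{j+1}{M}<1$ since $0\le j\le m$ and $M=m+2$, so both points lie in $(-1,1)$). Since Claim \ref{claim1}(1) says $\Delta(t)=F(t+1)-F(t)$ is nonincreasing and we showed $\Delta(-1)>0>\Delta(0)$, the function $F$ on $\br$ is "unimodal" around $t=0$ in the coarse sense that it strictly increases up to some point of $[-1,0]$ and strictly decreases after some point of $[0,1]$; the only way a non-integer point in $(-1,1)$ can achieve the same value as the integer point $0$ is if $F$ is actually \emph{constant} on a subinterval straddling $0$, which forces $\Delta$ to vanish on an interval and pins down the combinatorial data $(k_0,k_1)$ very tightly.

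Next, I would run the parallel analysis for the swapped pair $(\Lambda'',\Lambda')$: here the roles of $(j',m')$ and $(j'',m'')$ are interchanged while $\Lambda$ (hence $m=m'+m''$ and $j$) is unchanged, so the $\varepsilon=1$ maximum becomes $F(0,j'',m'',\to m', j'',\to j', j)$ — more precisely the value $F(0,m'-j',\dots)$ type expression obtained by applying the substitution in \eqref{neq23} — and the key point is that \eqref{neq23} relates $F(0,j',m'',j'',j)$ to $F(0,m'-j',m'',m''-j'',m'+m''-j)$ by the additive shift $\tfrac12(j'+j''-j)$, while \eqref{eq22} relates the $\varepsilon=-1$ function $G$ under the same substitution by the \emph{same} shift but with $t\mapsto 1-t$. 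I would assemble these two identities to compute $\mu^{\Lambda'',\Lambda'}_\Lambda-\bar\mu^{\Lambda'',\Lambda'}_\Lambda$ in terms of $\mu^{\Lambda',\Lambda''}_\Lambda-\bar\mu^{\Lambda',\Lambda''}_\Lambda$ (which is zero by hypothesis) plus an explicit correction coming from the fact that the $t\mapsto 1-t$ reflection sends $G(0),G(1)$ to $G(1),G(0)$ but sends $F$'s integer maximum location in a controlled way. The upshot should be that $\mu^{\Lambda'',\Lambda'}_\Lambda-\bar\mu^{\Lambda'',\Lambda'}_\Lambda$ equals $F(0)$ minus the value of $G$ at the reflected point, and the rigidity forced in the first paragraph (that $F$ is constant on an interval straddling $0$) turns out to be incompatible with the analogous equality for the swapped pair — essentially because the reflection $t\mapsto 1-t$ moves the "plateau" of $F$ off of the relevant evaluation points unless a numerical coincidence among $j,j',j'',m',m''$ holds, and that coincidence can be excluded by the strict inequalities $m',m''>0$ and $0\le j''\le m''$, $0\le j'\le m'$.

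Concretely, the cleanest route is probably to reduce everything to the single inequality $\Delta(-1)>0>\Delta(0)$ and its refinement: equality $\mu=\bar\mu$ for $(\Lambda',\Lambda'')$ means one of $G(0),G(1)$ equals $F(0)$; using $G(t)=F(t-\tfrac{j+1}{M})$ and the strict monotonicity of $\Delta$, one extracts that $F(-\tfrac{j+1}{M})=F(0)$ (the case $G(1)=F(0)$, i.e. $F(1-\tfrac{j+1}{M})=F(0)$, being handled symmetrically or ruled out by $\Delta(0)<0$ and $0<\tfrac{j+1}{M}$), hence $F$ is flat on $[-\tfrac{j+1}{M},0]$, which via $\partial_tF$ (computable from the formula for $\partial_t\Delta$ shifted by one) forces $k_0=k_1$-type degeneracies on the corresponding interval, i.e. $P'$ is constant there. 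For the swapped pair, identity \eqref{eq22} evaluated at the corresponding points shows $\mu^{\Lambda'',\Lambda'}_\Lambda-\bar\mu^{\Lambda'',\Lambda'}_\Lambda$ picks up a term $G(1-0)-G(1-1)=G(1)-G(0)$ under the reflection, which by the flatness just derived for the original pair is forced to be \emph{nonzero} (the plateau of $F$ does not survive the reflection). I expect the main obstacle to be the bookkeeping in the second paragraph: correctly tracking how the substitution $(j',m',j'',m'',j)\mapsto(m'-j',m',m''-j'',m'',m'+m''-j)$ interacts simultaneously with the shift $\tfrac12(j'+j''-j)$ in \eqref{neq23}/\eqref{eq22} and with the argument-reflection $t\mapsto 1-t$, and verifying that the leftover discrepancy is genuinely nonzero rather than merely generically nonzero — this is where the hypotheses $m',m''>0$ must be used decisively, and it may require splitting into the parity cases for $k_0,k_1$ exactly as in the two displayed computations of $\Delta(-1)$ and $\Delta(0)$ in the proof of Proposition \ref{maximum}.
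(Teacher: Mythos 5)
There is a genuine gap, and it lies in the central mechanism you propose. You argue that if the $\varepsilon=-1$ maximum (attained at one of the non-integer points $t=-\tfrac{j+1}{M}$ or $t=1-\tfrac{j+1}{M}$) equals $F(0)$, then $F$ must be constant on a subinterval straddling $0$, and you deduce this from a claimed unimodality of $F$ coming from $\Delta(-1)>0>\Delta(0)$ and the monotonicity of $\Delta$. Neither step is valid. The sign pattern of the \emph{unit-spaced} difference $\Delta(t)=F(t+1)-F(t)$ at integers says nothing about the monotonicity of $F$ between consecutive integers; and in fact each smooth piece of $F$ is an \emph{upward} parabola in $t$ (the $t^2$-coefficient on each piece is $M(1-\tfrac{M}{m'})+\tfrac{M^2}{m'}=M>0$), so $F$ is piecewise strictly convex and is never flat on any interval. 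A piecewise convex continuous function can perfectly well take the value $F(0)$ at a non-integer point of $(-1,1)$ by dipping below and returning, so your "rigidity" conclusion is false — and indeed it would wrongly imply that $\mu^{\Lambda',\Lambda''}_{\Lambda}=\bar\mu^{\Lambda',\Lambda''}_{\Lambda}$ never occurs, whereas it does occur (e.g.\ when $\tfrac12(j+j'')+1\leq\tfrac{j'}{2}$). A second, related confusion: the identities \eqref{eq22} and \eqref{neq23} implement the substitution $(j',j'',j)\mapsto(m'-j',m''-j'',m'+m''-j)$ with $m',m''$ \emph{fixed}; they do not encode the swap $(\Lambda',\Lambda'')\mapsto(\Lambda'',\Lambda')$, which interchanges $(m',j')$ with $(m'',j'')$. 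Your second paragraph conflates these two operations, so the bookkeeping you defer to would not close.

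What is missing is the representation-theoretic input that the paper uses in place of any analytic rigidity. At the two extremal points $k=\tfrac12(j-j'-j'')$ and $k=-\tfrac12(j+j'+j'')-1$ the correction term in the formula for $S_{\Lambda,\Lambda'',\lambda}$ vanishes, so both maxima are literally $\delta$-coordinates $n_k$ of $\delta$-maximal weights of $L(\Lambda')$; the equality $\mu=\bar\mu$ then becomes the statement that two specific $\delta$-maximal weights lie at the same $\delta$-height, which Lemma \ref{lemma5.11} (an argument about $\alpha$-weight-strings of $L(\Lambda')$) converts into the explicit inequality $\tfrac12(j+j'')+1\leq\tfrac{j'}{2}$ (Case I), or, via \eqref{eq22}/\eqref{neq23}, into its dual version \eqref{eq25} when the $\varepsilon=-1$ maximum occurs at $l=1$ (Case II). The corollary then follows from the elementary observation that no one of these inequalities can hold simultaneously with any one of the corresponding inequalities for the swapped triple (adding the two candidate inequalities forces $j+2\leq 0$ or a similar absurdity). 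Without Lemma \ref{lemma5.11} or a substitute for it, your outline has no way to reach the explicit arithmetic conditions on which the final incompatibility check rests.
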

\begin{proof}
We proceed in two cases:

{\em Case I.}  Suppose the maximum $\bar{\mu}^{\Lambda',\Lambda''}_{\Lambda}$ 
occurs when $\pi(\lambda)=\Lambda'-(\frac{1}{2}\left(j+j'+j''\right)+1)\alpha$ (cf. Lemma \ref{lemma5.9}).
This means that the $\delta$-maximal weights of $L(\Lambda')$ through
$\Lambda'-(\frac{1}{2}\left(j+j'+j''\right)+1)\alpha$ and through
$\Lambda'+\frac{1}{2}\left(j-j'-j''\right)\alpha$ have the same $\delta$
coordinate (cf. Proposition \ref{maximum}). By (next) Lemma \ref{lemma5.11}, 
we know that this occurs if and only if 
one of the following two conditions are satisfied:

(1) $\left|\frac{1}{2}\left(j-j''\right)\right|\leq\frac{j'}{2}$
and $\frac{1}{2}\left(j+j''\right)+1\leq\frac{j'}{2}$, or 

(2) $\frac{1}{2}\left(j+j''\right)+1=\frac{1}{2}\left(j-j''\right)$.

The latter is clearly impossible, while the former condition is fulfilled
precisely when $\frac{1}{2}\left(j+j''\right)+1\leq\frac{j'}{2}$.

 So,  for the equality ${\mu}^{\Lambda',\Lambda''}_{\Lambda}=\bar{\mu}^{\Lambda',\Lambda''}_{\Lambda}$ in this case,
the neccesary and sufficient condition is: 
\beqn \label{e5.10.3} \frac{1}{2}\left(j+j''\right)+1\leq\frac{j'}{2}.
\eeqn

{\em Case II.} Suppose the maximum $\bar{\mu}^{\Lambda',\Lambda''}_{\Lambda}$ 
occurs when $\pi(\lambda )= \Lambda'-(\frac{1}{2}\left(j+j'+j''\right)+1-M)\alpha$.
Then, by the identities \eqref{eq23} and \eqref{neq23}, we get 
\beqn\label{eq24} G (0,m'-j',m'', m''-j'',m'+m''-j)=F (0,m'-j',m'', m''-j'',m'+m''-j).
\eeqn
So, from the case I, we get in this case II, 
${\mu}^{\Lambda',\Lambda''}_{\Lambda}=\bar{\mu}^{\Lambda',\Lambda''}_{\Lambda}$ if and only if
\beqn\label{eq25}
\frac{1}{2}\bigl((m'+m''-j)+(m''-j'')\bigr)+1\leq\frac{1}{2}(m'-j').
\eeqn
So, if either of the inequalities \eqref{e5.10.3} or \eqref{eq25} is satisfied,  then none of them can be satisfied 
for the triple $(\Lambda, \Lambda',\Lambda'')$ replaced by $(\Lambda, \Lambda'',\Lambda')$. This proves the corollary.
\end{proof}
\begin{lem}\label{lemma5.11}
Suppose $\Lambda'-(\frac{1}{2}\left(j+j'+j''\right)+1)\alpha+n_1 \delta$ and $\Lambda'+\frac{1}{2}\left(j-j'-j''\right)\alpha+ n_2 \delta$ are $\delta$-maximal weights of $L(\Lambda')$. Then $n_1 = n_2$ if and only if 
\[
\left|\frac{1}{2}\left(j-j''\right)\right|\leq\frac{j'}{2} \quad \text{and}\,\,\,\,
\frac{1}{2}\left(j+j''\right)+1\leq\frac{j'}{2},
\]
or $\frac{1}{2}\left(j+j''\right)+1=\frac{1}{2}\left(j-j''\right)$.
\end{lem}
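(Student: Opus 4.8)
The plan is to reduce the statement to a property of a single explicit piecewise–linear function. Write $\Lambda'=m'\Lambda_0+\tfrac{j'}{2}\alpha$, so $m'>0$ and $0\le j'\le m'$. Both hypotheses concern only the function $k\mapsto n_k$ of Lemma \ref{lemma5.1} attached to $\Lambda'$: the supposition that $\Lambda'-\bigl(\tfrac12(j+j'+j'')+1\bigr)\alpha+n_1\delta$ and $\Lambda'+\tfrac12(j-j'-j'')\alpha+n_2\delta$ are $\delta$-maximal forces $k_1:=-\bigl(\tfrac12(j+j'+j'')+1\bigr)$ and $k_2:=\tfrac12(j-j'-j'')$ to be integers (equivalently $\tfrac{j-j'-j''}{2}\in\bz$), and then $n_1=n_{k_1}$, $n_2=n_{k_2}$, where $\Lambda'+k\alpha+n_k\delta$ denotes the unique $\delta$-maximal weight of $L(\Lambda')$ lying over $\Lambda'+k\alpha$. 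So I must show that $n_{k_1}=n_{k_2}$ if and only if one of the two listed conditions holds.

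First I would put $n_k$ in closed form. By the recursion in Lemma \ref{lemma5.1}, the quantity $n_k+\tfrac{k(k+j')}{m'}$ depends only on $k\bmod m'$; call this $m'$-periodic function $p$. Lemma \ref{lemma5.2'} evaluates $p$ on one period, and the Claim established inside the proof of Proposition \ref{maximum} identifies it with $P_{m',j'}$ shifted by $\tfrac{j'}{2}$, so that for every $k\in\bz$
\[
n_k=\phi\!\Bigl(k+\tfrac{j'}{2}\Bigr),\qquad \phi(s):=P_{m',j'}(s)-\frac{s^{2}}{m'}+\frac{j'^{2}}{4m'}.
\]
Plugging the two quadratic branches of $P_{m',j'}$ into this, one checks directly that $\phi$ is even, continuous and piecewise linear, vanishes identically on $[-\tfrac{j'}{2},\tfrac{j'}{2}]$, and has slopes $-1,-2,-3,\dots$ on the successive intervals $[\tfrac{j'}{2},m'-\tfrac{j'}{2}],\ [m'-\tfrac{j'}{2},m'+\tfrac{j'}{2}],\ [m'+\tfrac{j'}{2},2m'-\tfrac{j'}{2}],\dots$ (some degenerate when $j'\in\{0,m'\}$); in particular $\phi$ is non-increasing on $[0,\infty)$ and strictly decreasing on $[\tfrac{j'}{2},\infty)$. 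Hence, for $x,y\ge0$, one has $\phi(x)=\phi(y)$ if and only if $x=y$ or $x,y\in[0,\tfrac{j'}{2}]$.

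Now I would simply substitute. Since $k_1+\tfrac{j'}{2}=-\tfrac12(j+j'')-1$ and $k_2+\tfrac{j'}{2}=\tfrac12(j-j'')$, evenness of $\phi$ gives $n_1=\phi\bigl(\tfrac12(j+j'')+1\bigr)$ and $n_2=\phi\bigl(\bigl|\tfrac12(j-j'')\bigr|\bigr)$, with both arguments $\ge0$. So $n_1=n_2$ holds exactly when either $\tfrac12(j+j'')+1=\bigl|\tfrac12(j-j'')\bigr|$ or both numbers lie in $[0,\tfrac{j'}{2}]$. The latter is precisely the conjunction $\bigl|\tfrac12(j-j'')\bigr|\le\tfrac{j'}{2}$ and $\tfrac12(j+j'')+1\le\tfrac{j'}{2}$ (using $\tfrac12(j+j'')+1>0$), i.e.\ the first condition in the statement; the former, split by the sign of $j-j''$, is either $\tfrac12(j+j'')+1=\tfrac12(j-j'')$ — the second condition in the statement — or $\tfrac12(j+j'')+1=-\tfrac12(j-j'')$, and both of these in fact force $j''=-1$ or $j=-1$ and hence never occur (which is why the proof of Corollary \ref{corcancel} discards them). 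This gives the claimed equivalence.

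The only genuine computation is the second paragraph: the closed form for $n_k$ and the plateau/monotonicity picture for $\phi$. But the closed form is essentially a repackaging of the Claim already proved in the proof of Proposition \ref{maximum} (where it is used to analyze $S_{\Lambda,\Lambda'',\lambda}$), and the structural description of $\phi$ is a short check with two quadratics, so I do not expect a serious obstacle; the point to be careful about is that the identity $n_k=\phi(k+\tfrac{j'}{2})$ must be invoked for arbitrary $k\in\bz$, not merely for $k$ in a single period, which is exactly where the recursion of Lemma \ref{lemma5.1} (rather than just Lemma \ref{lemma5.2'}) enters.
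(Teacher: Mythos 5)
Your proof is correct, but it takes a genuinely different route from the paper. The paper argues representation-theoretically: for a fixed $\delta$-level $n$ it considers the $\alpha$-string $P_n=\{\nu\in P(\Lambda'):\Lambda'-\nu\in\bz\alpha+n\delta\}$ and shows, via the string property of weights (if $\mu+k\delta\in P(\Lambda')$ then the whole $\alpha$-string through $\mu+k\delta$ lies in $P(\Lambda')$), that $P_n\cap P^o(\Lambda')$ is either empty, or the full top string $[s_1\Lambda',\Lambda']$ when $n=0$, or exactly the two endpoints $\{\lambda_n,s_1\lambda_n\}$; two $\delta$-maximal weights then share a $\delta$-coordinate iff they coincide, are $s_1$-conjugate, or both lie in $[s_1\Lambda',\Lambda']$, which translates into the stated inequalities. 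You instead recycle the analytic apparatus from the proof of Proposition \ref{maximum}: the closed form $n_k=\phi\bigl(k+\tfrac{j'}{2}\bigr)$ with $\phi(s)=P_{m',j'}(s)-\tfrac{s^2}{m'}+\tfrac{j'^2}{4m'}$, together with the observation that $\phi$ is even, flat exactly on $[-\tfrac{j'}{2},\tfrac{j'}{2}]$ and strictly decreasing beyond, encodes precisely the same trichotomy (the plateau is the top string, the strict monotonicity is the "two endpoints" statement), and your bookkeeping of the degenerate equality cases (forcing $j=-1$ or $j''=-1$) matches the paper's remark in Corollary \ref{corcancel} that the second alternative is vacuous. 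Your version is more computational but has the virtue of deriving the lemma from the same function $P_{m',j'}$ used throughout Section 5, and your caveat about needing the recursion \eqref{ne18} (not just Lemma \ref{lemma5.2'}) to extend $n_k=\phi\bigl(k+\tfrac{j'}{2}\bigr)$ beyond one period is exactly the right point to flag; the paper's version is shorter and is the kind of structural argument that transfers more readily to the $A_2^{(2)}$ setting.
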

\begin{proof}
Fix an integer $n$ and consider the set $P_n = \{ \nu \in P(\Lambda') \, : \, \Lambda' - \nu = k\alpha + n \delta,\, k \in \bz \}$. We give a description of $P_n \cap P^o(\Lambda')$. Clearly, $P_n = \{\lambda, \lambda -\alpha, \dots, \lambda - \langle\lambda,\alpha^\vee\rangle\alpha \}$ for some $\lambda=\lambda_n$ and that this $\lambda$ is uniquely determined by $n$ (cf. [K$_3$, Exercise 2.3.E.2]). Suppose that some $\mu \in P_n$ is not $\delta$-maximal, then none of $\{\mu, \dots, \mu - \langle\mu, \alpha^\vee\rangle\alpha \}$ are $\delta$-maximal, since
if $\mu+k\delta\in P(\Lambda')$, then the whole string $\{\mu +k\delta, \dots, \mu +k\delta- \langle\mu, \alpha^\vee\rangle\alpha \}
\subset P(\Lambda')$. In particular, if $\lambda - \alpha$ is $\delta$-maximal, then so is $\lambda$. Hence, $\mathfrak{g}_{\delta-\alpha}L(\Lambda')_\lambda = 0$ and  $\mathfrak{g}_{\alpha}L(\Lambda')_\lambda = 0$. Therefore, $\lambda$ is the highest weight $\Lambda'$. Thus, $P_n \cap P^o(\Lambda')$ is either empty, or $\lambda=\Lambda'$   (in the case that $n = 0$), or the set $\{\lambda,s_1\lambda\}$. From this and Corollary \ref{cor5.1} the lemma follows easily.
\end{proof}

\section{Saturation factor for the $A_{1}^{(1)}$ Case}

 We assume that $\mathfrak{g}=\widehat{\mathfrak{sl}_{2}}$ in this section.
\begin{defn}
Let $\Lambda'\in P_{+}^{(m')}, \Lambda''\in P_{+}^{(m'')}$ and $\Lambda\in P_{+}^{(m'+m'')}$. Then, we call $L(\Lambda+n\delta)$
the \textit{$\delta$-maximal component of $L(\Lambda')\otimes L(\Lambda'')$
through $\Lambda$} if $L(\Lambda+n\delta)$ is a submodule of $L(\Lambda')\otimes L(\Lambda'')$
but $L(\Lambda+m\delta)$ is not a component for any $m>n$.\end{defn}
\begin{thm}  \label{thm6.2} Let $\Lambda', \Lambda'', \Lambda$ be as in Proposition \ref{maximum} . Then, 
$L(\Lambda+n\delta)$ is a $\delta$-maximal component of $L(\Lambda')\otimes L(\Lambda'')$
if $n=\min(n_{1},n_{2})$, where $n_{1}$ is such that $\Lambda-\Lambda''+n_{1}\delta\in P^o(\Lambda')$
and $n_{2}$ is such that $\Lambda-\Lambda'+n_{2}\delta\in P^o(\Lambda'')$.\end{thm}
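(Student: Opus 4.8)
The plan is to read off the multiplicity $m^{\Lambda+N\delta}_{\Lambda',\Lambda''}$ of $L(\Lambda+N\delta)$ in $L(\Lambda')\otimes L(\Lambda'')$ directly from the character identity of Proposition \ref{tensor}. Put
$$D^{\Lambda',\Lambda''}_\Lambda:=\sum_{\lambda\in T_{\Lambda}^{\Lambda',\Lambda''}}\varepsilon(v_{\Lambda,\Lambda'',\lambda})\,c_{\Lambda',\lambda}\,e^{S_{\Lambda,\Lambda'',\lambda}\delta},$$
so that $m^{\Lambda+N\delta}_{\Lambda',\Lambda''}$ is the coefficient of $e^{N\delta}$ in $D^{\Lambda',\Lambda''}_\Lambda$, and, since $L(\Lambda')\otimes L(\Lambda'')\cong L(\Lambda'')\otimes L(\Lambda')$, also the coefficient of $e^{N\delta}$ in $D^{\Lambda'',\Lambda'}_\Lambda$. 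Every $\lambda$ occurring lies in $P^o(\Lambda')$, so $c_{\Lambda',\lambda}$ involves only non-positive powers of $e^{\delta}$ and its $e^{0}$-coefficient is $\dim L(\Lambda')_\lambda\geq 1$; hence $D^{\Lambda',\Lambda''}_\Lambda$ is supported in $\delta$-degrees $\leq\max\bigl(\mu^{\Lambda',\Lambda''}_\Lambda,\bar{\mu}^{\Lambda',\Lambda''}_\Lambda\bigr)$, and whichever of these two is strictly larger forces the top coefficient to be a nonzero sum of $\pm\dim L(\Lambda')_\nu$'s of that single sign. I would first record, via Lemma \ref{lemma5.1'}, that $\mu^{\Lambda',\Lambda''}_\Lambda=n_1$ and $\mu^{\Lambda'',\Lambda'}_\Lambda=n_2$: the weight $\lambda$ of Proposition \ref{maximum} achieving $\mu^{\Lambda',\Lambda''}_\Lambda$ has $\pi(\lambda)=\Lambda'+\frac{1}{2}(j-j'-j'')\alpha$ and $v_{\Lambda,\Lambda'',\lambda}=\mathrm{id}$, whence $\lambda=\Lambda-\Lambda''+S_{\Lambda,\Lambda'',\lambda}\delta\in P^o(\Lambda')$, and the uniqueness of $n_k$ in Lemma \ref{lemma5.1} forces $S_{\Lambda,\Lambda'',\lambda}=n_1$.

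Next I would show that $L(\Lambda+m\delta)$ is not a component for $m>n:=\min(n_1,n_2)$, using only non-negativity of multiplicities. If $\bar{\mu}^{\Lambda',\Lambda''}_\Lambda>\mu^{\Lambda',\Lambda''}_\Lambda$, then the coefficient of $e^{\bar{\mu}^{\Lambda',\Lambda''}_\Lambda\delta}$ in $D^{\Lambda',\Lambda''}_\Lambda$ receives contributions only from $\varepsilon=-1$ terms (at their top $\delta$-degree) and is therefore strictly negative, contradicting $m^{\Lambda+\bar{\mu}^{\Lambda',\Lambda''}_\Lambda\delta}_{\Lambda',\Lambda''}\geq 0$. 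So $\bar{\mu}^{\Lambda',\Lambda''}_\Lambda\leq\mu^{\Lambda',\Lambda''}_\Lambda=n_1$, and therefore $m^{\Lambda+m\delta}_{\Lambda',\Lambda''}=0$ for $m>n_1$. Interchanging $\Lambda'$ and $\Lambda''$ gives $m^{\Lambda+m\delta}_{\Lambda',\Lambda''}=0$ for $m>n_2$ as well, hence for every $m>\min(n_1,n_2)$.

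Finally I would show $m^{\Lambda+n\delta}_{\Lambda',\Lambda''}>0$ for $n=\min(n_1,n_2)$; by the symmetry just used we may assume $n_1\leq n_2$, so $n=n_1$. If $\bar{\mu}^{\Lambda',\Lambda''}_\Lambda<n_1$, then at degree $n_1=\mu^{\Lambda',\Lambda''}_\Lambda$ the only contribution to $D^{\Lambda',\Lambda''}_\Lambda$ is the $e^{0}$-coefficient of the unique $\varepsilon=+1$ maximizing term of Proposition \ref{maximum}, namely $\dim L(\Lambda')_\lambda>0$, and we are done. If instead $\bar{\mu}^{\Lambda',\Lambda''}_\Lambda=n_1=\mu^{\Lambda',\Lambda''}_\Lambda$, cancellation may occur, and this is exactly where Corollary \ref{corcancel} enters: it gives $\bar{\mu}^{\Lambda'',\Lambda'}_\Lambda\neq\mu^{\Lambda'',\Lambda'}_\Lambda=n_2$, which combined with the bound $\bar{\mu}^{\Lambda'',\Lambda'}_\Lambda\leq n_2$ from the previous step yields $\bar{\mu}^{\Lambda'',\Lambda'}_\Lambda<n_2$. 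Computing the same multiplicity through $D^{\Lambda'',\Lambda'}_\Lambda$, whose top degree is then $n_2$, the coefficient of $e^{n_2\delta}$ equals $\dim L(\Lambda'')_{\lambda'}>0$ for the corresponding maximizing weight $\lambda'$; so $m^{\Lambda+n_2\delta}_{\Lambda',\Lambda''}>0$. Since the previous paragraph shows this multiplicity vanishes in $\delta$-degree $>n_1$, we get $n_2\leq n_1$, hence $n_1=n_2=n$ and $m^{\Lambda+n\delta}_{\Lambda',\Lambda''}>0$, completing the proof.

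The step I expect to be the real obstacle is the case $\bar{\mu}^{\Lambda',\Lambda''}_\Lambda=\mu^{\Lambda',\Lambda''}_\Lambda$, where cancellation in $D^{\Lambda',\Lambda''}_\Lambda$ is possible and the crude top-coefficient argument breaks down; the point of the Non-Cancellation Corollary \ref{corcancel}, together with the freedom to compute $m^{\Lambda+N\delta}_{\Lambda',\Lambda''}$ from either $D^{\Lambda',\Lambda''}_\Lambda$ or $D^{\Lambda'',\Lambda'}_\Lambda$, is precisely to rule out cancellation occurring in both orderings at once. A routine point to check en route is that every $\lambda\in T_{\Lambda}^{\Lambda',\Lambda''}$ is genuinely $\delta$-maximal in $L(\Lambda')$ (so $c_{\Lambda',\lambda}$ contributes only non-positive powers of $e^\delta$ with positive constant term) and that $D^{\Lambda',\Lambda''}_\Lambda$ is supported in finitely many positive $\delta$-degrees, both immediate from $T_{\Lambda}^{\Lambda',\Lambda''}\subseteq P^o(\Lambda')$ and the explicit form of $S_{\Lambda,\Lambda'',\lambda}$ in Lemmas \ref{lemma5.1'} and \ref{lemma5.9}.
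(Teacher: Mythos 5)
Your proof is correct and follows exactly the route the paper intends: the paper's own proof is the one-line assertion that the theorem ``follows immediately by combining Propositions \ref{tensor}, \ref{maximum} and Lemma \ref{lemma5.1'},'' and your argument is precisely that combination written out in full. You have also correctly identified that the one genuinely delicate point is the possible top-degree cancellation when $\mu^{\Lambda',\Lambda''}_{\Lambda}=\bar{\mu}^{\Lambda',\Lambda''}_{\Lambda}$, and that Corollary \ref{corcancel} (which the paper states immediately beforehand but does not explicitly invoke in its one-line proof) is exactly what closes that gap.
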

\begin{proof} This follows immediately by combining Propositions \ref{tensor}, \ref{maximum} and Lemma \ref{lemma5.1'}.
\end{proof}
\begin{lem} Fix a positive integer $N$. 
Let $\Lambda\in \bar{P}_+$ and let $\lambda\in\Lambda+Q$, where $Q$ is the root lattice $\mathbb{Z}\alpha\oplus\mathbb{Z}\delta$ of
$\widehat{\mathfrak{sl}_{2}}$. 
Then, $N\lambda\in P^o(N\Lambda)$ if
and only if $\lambda\in P^o(\Lambda)$. 
\end{lem}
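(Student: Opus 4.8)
The plan is to reduce the statement, via the explicit description of $P^o(\Lambda)$ for $\widehat{\mathfrak{sl}_2}$ obtained in Section 5, to a numerical identity that is homogeneous of degree one in the relevant parameters. First I would dispose of the case $\Lambda(c)=0$: then $\Lambda=0$, so $\lambda\in Q$ and $P^o(0)=\{0\}$, and both sides assert $\lambda=0$. So assume $m:=\Lambda(c)>0$ and write $\Lambda=m\Lambda_0+\tfrac{j}{2}\alpha$. Since $\lambda\in\Lambda+Q$ and $Q=\bz\alpha\oplus\bz\delta$, write $\lambda=\Lambda+k\alpha+n\delta$ with $k,n\in\bz$; then $N\lambda=N\Lambda+(Nk)\alpha+(Nn)\delta$, where $N\Lambda=(Nm)\Lambda_0+\tfrac{Nj}{2}\alpha\in\bar{P}_+$ with $N\Lambda(c)=Nm>0$. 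Thus both $\Lambda$ and $N\Lambda$ are of the form to which Lemmas \ref{lemma5.1} and \ref{lemma5.2'} apply.

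Next, for $\Lambda''=m''\Lambda_0+\tfrac{j''}{2}\alpha\in P_+$ with $m''>0$ and $k\in\bz$, write $n_k(\Lambda'')$ for the unique integer with $\Lambda''+k\alpha+n_k(\Lambda'')\delta\in P^o(\Lambda'')$; by Lemma \ref{lemma5.1}, $P^o(\Lambda'')=\{\Lambda''+k\alpha+n_k(\Lambda'')\delta:k\in\bz\}$. Applying this to our $\lambda=\Lambda+k\alpha+n\delta$ and to $N\lambda=N\Lambda+(Nk)\alpha+(Nn)\delta$ gives $\lambda\in P^o(\Lambda)\iff n=n_k(\Lambda)$ and $N\lambda\in P^o(N\Lambda)\iff Nn=n_{Nk}(N\Lambda)$. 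Hence the lemma is equivalent to the identity
\[ n_{Nk}(N\Lambda)=N\,n_k(\Lambda)\qquad\text{for all }k\in\bz,\]
which we denote $(\ast)$.

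To prove $(\ast)$, note that passing from $\Lambda$ to $N\Lambda$ replaces $(m,j)$ by $(Nm,Nj)$. For $0\le k<m$ we have $0\le Nk<Nm$, and Lemma \ref{lemma5.2'} gives $n_k(\Lambda)=-k$ and $n_{Nk}(N\Lambda)=-Nk$ when $0\le k\le m-j$ (so $0\le Nk\le N(m-j)$), while $n_k(\Lambda)=m-j-2k$ and $n_{Nk}(N\Lambda)=N(m-j)-2Nk=N(m-j-2k)$ when $m-j\le k<m$; in either case $(\ast)$ holds. For general $k$, write $k=qm+r$ with $0\le r<m$; then $Nk=q(Nm)+(Nr)$ with $0\le Nr<Nm$, so by the recursion \eqref{ne18} applied to $N\Lambda$, then to $\Lambda$,
\[ n_{Nk}(N\Lambda)=n_{Nr}(N\Lambda)-q(Nk+Nr+Nj)=N\,n_r(\Lambda)-qN(k+r+j)=N\,n_k(\Lambda).\]
This establishes $(\ast)$, and hence the lemma. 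The proof is essentially bookkeeping; the one point requiring care is that Euclidean division of $k$ by $m$ is transported to Euclidean division of $Nk$ by $Nm$ with the residue merely scaled — so no case boundary in Lemma \ref{lemma5.2'} is crossed — which is exactly what makes the linear scaling of $(m,j,k)$ propagate to $n_k$ itself. (Alternatively, one may deduce the lemma from Corollary \ref{cor5.1} together with the $W$-stability of $P^o(\Lambda)$, using that the set $S$, hence $S(\Lambda)\cap P_+$, is compatible with multiplication by $N$.)
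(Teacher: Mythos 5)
Your proof is correct, but it takes a different route from the paper's. The paper's own argument is a two-line appeal to Corollary \ref{cor5.1}: the defining inequalities $k\leq\frac{j}{2}$, $l\leq\frac{m-j}{2}$ for $P^o(\Lambda)_+$ are visibly homogeneous under $(\Lambda,\lambda)\mapsto(N\Lambda,N\lambda)$, so the equivalence holds for dominant $\lambda$, and then one invokes $P^o(\Lambda)=W\cdot(P^o(\Lambda)_+)$ together with the linearity of the $W$-action on $\fh^*$ to get it for all $\lambda$ --- essentially the alternative you mention in your closing parenthesis. Your main argument instead reduces the lemma to the scaling identity $n_{Nk}(N\Lambda)=N\,n_k(\Lambda)$ and verifies it from the explicit formulas of Lemma \ref{lemma5.2'} and the recursion \eqref{ne18}; the one point needing care, which you correctly flag, is that Euclidean division $k=qm+r$ transports to $Nk=q(Nm)+Nr$ with the same quotient, so the case boundaries $0\le r\le m-j$ and $m-j\le r<m$ are preserved under scaling. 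Both arguments are complete; the paper's is shorter and avoids the $n_k$ bookkeeping entirely, while yours has the mild advantage of producing the quantitative statement $n_{Nk}(N\Lambda)=N\,n_k(\Lambda)$ explicitly, which is in the spirit of how these $n_k$'s are used elsewhere in Section 5. Your disposal of the degenerate case $\Lambda(c)=0$ (forcing $\Lambda=0$ since $\Lambda\in\bar P_+$) is also fine, though the paper implicitly sidesteps it since Corollary \ref{cor5.1} still describes $P^o(0)_+=\{0\}$ correctly.
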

\begin{proof}
The validity of the lemma  is clear for $\lambda\in P^o(\Lambda)_+$ from Corollary \ref{cor5.1}.  But since
$ P^o(\Lambda)=W\cdot ( P^o(\Lambda)_+)$,
 and the action of $W$ on $\fh^*$ is linear,  the lemma follows for any $\lambda \in P^o(\Lambda)$.\end{proof}
\begin{cor}\label{cor6.4}
Let $d_o\in\mathbb{Z}_{>1}$. Let $\Lambda$, $\Lambda'$, $\Lambda'' \in P_+$
be such that  $\Lambda-\Lambda'-\Lambda''\in Q$ and 
 $L(N\Lambda)$ is a submodule of $L(N\Lambda')\otimes L(N\Lambda'')$,  for some $N\in \bz_{>0}$.  
Then,  $L(d_o\Lambda)$ is a submodule of $L(d_o\Lambda')\otimes L(d_o\Lambda'')$.

Such a $d_o$ is called  a {\em saturation factor}. 
\end{cor}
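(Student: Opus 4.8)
The strategy is to strip off the imaginary ($\delta$-)components so as to reduce to the setting of Theorem \ref{thm6.2}, to follow the top of the relevant $\delta$-string under the scaling $\Lambda\mapsto M\Lambda$ using the lemma just proved, and then to descend that string by the Virasoro representation theory of Lemma \ref{virasoro}. The hypothesis $d_o>1$ enters for exactly one reason: to dodge the single $\delta$-level that part (b) of Lemma \ref{virasoro} may fail to produce.

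\emph{Reductions.} If $m':=\Lambda'(c)=0$ or $m'':=\Lambda''(c)=0$, then one of $L(\Lambda'),L(\Lambda'')$ is $1$-dimensional, $L(\Lambda')\otimes L(\Lambda'')$ is irreducible, the hypothesis forces $\Lambda=\Lambda'+\Lambda''$, and the conclusion is immediate; so assume $m',m''>0$. Pairing $\Lambda-\Lambda'-\Lambda''\in Q=\bz\alpha\oplus\bz\delta$ with $c$ gives $\Lambda(c)=m'+m''=:m$, and the $\alpha$-component of $\Lambda-\Lambda'-\Lambda''$ (after writing $\Lambda=m\Lambda_0+\tfrac{j}{2}\alpha+\Lambda(d)\delta$, and similarly for $\Lambda',\Lambda''$) shows $\tfrac12(j-j'-j'')\in\bz$. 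Put $\bar\Lambda:=\Lambda-\Lambda(d)\delta\in\bar P_+$, likewise $\bar\Lambda',\bar\Lambda''$, and $b:=(\Lambda-\Lambda'-\Lambda'')(d)\in\bz$. Since the $d$-action only shifts the $\delta$-grading, for every integer $M\ge1$
\[
L(M\Lambda)\subset L(M\Lambda')\otimes L(M\Lambda'')\iff L(M\bar\Lambda+Mb\,\delta)\subset L(M\bar\Lambda')\otimes L(M\bar\Lambda''),
\]
and the triples $(\bar\Lambda,\bar\Lambda',\bar\Lambda'')$, $(M\bar\Lambda,M\bar\Lambda',M\bar\Lambda'')$ satisfy the hypotheses of Proposition \ref{maximum}, hence of Theorem \ref{thm6.2}.

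\emph{The top of the $\delta$-string, and the bound on $b$.} By Theorem \ref{thm6.2}, the $\delta$-maximal component of $L(\bar\Lambda')\otimes L(\bar\Lambda'')$ over $\bar\Lambda$ is $L(\bar\Lambda+n_0\delta)$ with $n_0=\min(n_1,n_2)$, where $\bar\Lambda-\bar\Lambda''+n_1\delta\in P^o(\bar\Lambda')$ and $\bar\Lambda-\bar\Lambda'+n_2\delta\in P^o(\bar\Lambda'')$; likewise the top over $M\bar\Lambda$ sits at level $n_0^{(M)}=\min(n_1^{(M)},n_2^{(M)})$. As $\bar\Lambda-\bar\Lambda''+n_1\delta\in\bar\Lambda'+Q$, the lemma preceding this corollary gives $M\bar\Lambda-M\bar\Lambda''+Mn_1\delta\in P^o(M\bar\Lambda')$; since a $\bz\delta$-coset meets $P^o(M\bar\Lambda')$ in at most one weight, $n_1^{(M)}=Mn_1$, similarly $n_2^{(M)}=Mn_2$, so $n_0^{(M)}=Mn_0$. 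The hypothesis $L(N\Lambda)\subset L(N\Lambda')\otimes L(N\Lambda'')$ says $L(N\bar\Lambda+Nb\,\delta)$ is a component over $N\bar\Lambda$, hence $Nb\le n_0^{(N)}=Nn_0$, i.e.\ $b\le n_0$.

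\emph{Descending the string.} It remains to produce $L(d_o\bar\Lambda+d_ob\,\delta)$ in $L(d_o\bar\Lambda')\otimes L(d_o\bar\Lambda'')$, knowing $d_ob\le d_on_0=n_0^{(d_o)}$. If $b=n_0$ this is the $\delta$-maximal component, which occurs by Theorem \ref{thm6.2}. If $b\le n_0-1$, then $n_0^{(d_o)}-d_ob=d_o(n_0-b)\ge d_o\ge2$. By Proposition \ref{tensor} the associated coset module for $\Vir$ is unitarizable and bounded above in the $\delta$-grading, hence a direct sum of irreducible unitarizable highest weight $\Vir$-modules; choose a summand $U$ whose highest weight space lies at the top level $n_0^{(d_o)}$, with highest weight $\lambda_U$. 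Its central charge $\lambda_U(C)=\dim\overset{\circ}{\fg}\bigl(\tfrac{d_om'}{d_om'+g}+\tfrac{d_om''}{d_om''+g}-\tfrac{d_om}{d_om+g}\bigr)$ is strictly positive (for $\overset{\circ}{\fg}=\mathfrak{sl}_2$, $g=2$, it equals $3\bigl(1-\tfrac{2}{a+2}-\tfrac{2}{b+2}+\tfrac{2}{a+b+2}\bigr)>0$ with $a=d_om'\ge1$, $b=d_om''\ge1$), so case (c) of Lemma \ref{virasoro} is excluded; by parts (a) and (b) of that lemma, $U_{\lambda_U+j h}\neq 0$ for every integer $j>1$, whether or not $\lambda_U(L_0)=0$. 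Taking $j=n_0^{(d_o)}-d_ob\ (\ge2)$, the number of $\delta$-levels below the top, gives a nonzero contribution at $\delta$-level $d_ob$, so $L(d_o\bar\Lambda+d_ob\,\delta)\subset L(d_o\bar\Lambda')\otimes L(d_o\bar\Lambda'')$, equivalently $L(d_o\Lambda)\subset L(d_o\Lambda')\otimes L(d_o\Lambda'')$. The one genuine obstacle — and the reason $d_o=1$ fails in general — is the degenerate case $\lambda_U(L_0)=0$ of Lemma \ref{virasoro}(b): there the $\delta$-level exactly one below the top is missing, so $b=n_0-1$ would be fatal for $d_o=1$, whereas for $d_o\ge2$ the target level $d_ob$ is at least two below the top, where unitarity forces it to appear. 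Everything else is bookkeeping.
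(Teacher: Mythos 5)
Your proof is correct and follows essentially the same route as the paper's: reduce to level-zero representatives in $\bar{P}_+$, locate the $\delta$-maximal component via Theorem \ref{thm6.2} together with the scaling lemma, and then fill in the $\delta$-string at depth $\geq 2$ below the top using the unitarizable coset Virasoro module of Proposition \ref{tensor} and Lemma \ref{virasoro}. The only cosmetic difference is that you track the pair $(b,n_0)$ where the paper works with the single offset $\tilde{n}=n_0-b$.
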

\begin{proof} If $\Lambda'(c)=0$ or $\Lambda''(c)=0$, then 
$$L(N\Lambda')\otimes L(N\Lambda'')\simeq L(N(\Lambda'+\Lambda'')),$$
for any $N\geq 1$. Thus, the corollary is clearly true in this case. So, let us assume that both of 
$\Lambda'(c)>0$ and $\Lambda''(c)>0$. 
Let $L(N\Lambda+n\delta)$ be the $\delta$-maximal component of $L(N\Lambda')\otimes L(N\Lambda'')$
through $L(N\Lambda)$, for some $n\geq 0$. For any $\Psi\in P_+$, let $\bar{\Psi}\in \bar{P}_+$  be the projection $\pi(\Psi)$ defined just before Lemma \ref{lemma5.1}. Applying Theorem \ref{thm6.2} to $\bar{\Lambda}', \bar{\Lambda}'', \bar{\Lambda}$, and observing that 
\beqn \label{eq6.4.0} L(\bar{\Psi}+k\delta)\simeq L(\bar{\Psi})\otimes L(k\delta)\eeqn
and $L(k\delta)$ is one dimensional, we get that there is a $\delta$-maximal component $L(\Lambda+\tilde{n}\delta)$ of 
$L(\Lambda')\otimes L(\Lambda'')$ through $L(\Lambda)$, for some (unique) $\tilde{n}\in \bz$. 

Again applying Theorem \ref{thm6.2} to $N\bar{\Lambda}', N\bar{\Lambda}'', N\bar{\Lambda}$, and observing 
(using Corollary \ref{cor5.1}) that 
\beqn \label{eq6.4.1} P^o(N\bar{\Psi})\supset NP^o(\bar{\Psi}),\eeqn
we get that $L(N\Lambda+N\tilde{n}\delta)$ is the $\delta$-maximal component  of 
$L(N\Lambda')\otimes L(N\Lambda'')$ through $L(N\Lambda)$. Thus, $n=N\tilde{n}$. In particular,
\beqn\label{e6.3} \tilde{n}\geq 0.\eeqn
 Let 
\beqn\label{e6.4}
\sum_{\lambda\in T_{\bar{\Lambda}}^{\Lambda',\Lambda''}}\varepsilon(v_{\bar{\Lambda},\Lambda'',\lambda})c_{\Lambda',\lambda}e^{S_{\bar{\Lambda},\Lambda'',\lambda}\delta}=\sum_{k\in\mathbb{Z}_+}c_{k}e^{(\Lambda(d)+\tilde{n}-k)\delta},
\eeqn
for some $c_k\in \bz_+$ with $c_0$ nonzero. 
By Proposition \ref{tensor}, this is the character of a unitarizable 
Virasoro representation with each irreducible component having the same nonzero central charge. Thus, by Lemma 
\ref{virasoro}, for any $k>1$, we get 
$c_{k}\neq0$. 

By the above argument,  $L(d_o\Lambda+d_o\tilde{n}\delta)$ is the $\delta$-maximal component  of 
$L(d_o\Lambda')\otimes L(d_o\Lambda'')$ through $L(d_o\Lambda)$. If $\tilde{n}=0$, we get that 
 $$L(d_o\Lambda) \subset L(d_o\Lambda')\otimes L(d_o\Lambda'').$$
If $\tilde{n}>0$, then $d_o\tilde{n}$ being $>1$, by the analogue of  \eqref{e6.4} for 
$d_o\Lambda', d_o\Lambda''$ and $d_o\Lambda$,
$L(d_o\Lambda) \subset L(d_o\Lambda')\otimes L(d_o\Lambda'').$ (Here we have used that 
$L_0=-d+p$ on any $\fg$-isotypical component of $L(\Lambda')\otimes L(\Lambda'')$ with highest weight in $\Lambda+\bz \delta$, for 
a number $p$ depending only upon $\bar{\Lambda}, \Lambda'$ and $\Lambda''$, cf. [KR, Identity 10.25 on page 116].)
This proves the corollary.
\end{proof}

\begin{rem}
We note that $L(2\Lambda_{0}-\delta)$ is not a component of $L(\Lambda_{0})\otimes L(\Lambda_{0})$
(cf. [Kac, Exercise 12.16]).
But, of course,  $L(2\Lambda_{0})$ is a $\delta$-maximal component. By the identity \eqref{e6.4}, we know that $L(2d_o\Lambda_{0}-d_o\delta)$
must be a component of $L(d_o\Lambda_{0})\otimes L(d_o\Lambda_{0})$, for any $d_o>1$. 
So $d_o$ can not be taken to be $1$ in Corollary \ref{cor6.4}.
\end{rem}

\section{A Conjecture}

In this section, $G$ is any symmetrizable Kac-Moody group. 
We recall the following definition of the deformed product due to Belkale-Kumar [BK]. (Even though they gave the definition in the finite case, the same definition works in the symmetrizable Kac-Moody case, though with only one parameter.)
\begin{definition}
Let $P$ be any standard parabolic subgroup of $G$. Recall from Section 2 that  $\{\epsilon^w_P\}_{w\in W^P}$ is a
basis of the singular cohomology $H^*(X_P, \bz)$. 
Write the standard cup product in $H^*(X_P, \Bbb Z)$ in this basis as follows:
\begin{equation}\label{constants}
\epsilon_P^u\cdot \epsilon_P^v=\sum_{w\in W^P} n^w_{u,v}\epsilon_P^w,\,\,\,\text{for some (unique)}\,  n^w_{u,v}\in \bz.
\end{equation}
Introduce the indeterminate ${\tau}$  and define a deformed cup product $\odot$
as follows:
\begin{equation}\label{10n}
\epsilon_P^u \odot \epsilon_P^v=
\sum_{w\in W^P} 
{\tau}^{(u^{-1}\rho+v^{-1}\rho -w^{-1}\rho -\rho)(x_P)}
n^w_{u,v} \epsilon_P^w,
\end{equation}
where $x_P:=\sum_{\alpha_i\in
\Delta\setminus\Delta(P)}\,x_i$, $\Delta(P)$ is the set of simple roots of the Levi $L$ of $P$ and, as in Section 2, $\Delta$ is 
the set of simple roots of $G$. 

The following lemma is a generalization of the corresponding result in the finite case (cf. [BK, Proposition 17]). 
\begin{proposition} (a) The product $\odot$ is associative and clearly commutative. 

(b) 
Whenever $n^w_{u,v}$ is nonzero,
the exponent of $\tau$ in the above is a nonnegative integer. 
\end{proposition}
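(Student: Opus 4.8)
The plan is to handle commutativity and integrality directly, to reduce associativity to the associativity of the ordinary cup product via a diagonal rescaling of the Schubert basis, and to prove part (b) --- the one substantial point --- by adapting the geometric argument of Belkale--Kumar.

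Commutativity and integrality are immediate. Since $X_P$ has a cell decomposition into affine spaces, $\epsilon_P^w\in H^{2\ell(w)}(X_P,\bz)$ sits in even degree, so the cup product is genuinely commutative and $n^w_{u,v}=n^w_{v,u}$; as the exponent $(u^{-1}\rho+v^{-1}\rho-w^{-1}\rho-\rho)(x_P)$ in \eqref{10n} is symmetric in $u$ and $v$, the product $\odot$ is commutative. For integrality, write this exponent as $\bigl((u^{-1}\rho-\rho)+(v^{-1}\rho-\rho)-(w^{-1}\rho-\rho)\bigr)(x_P)$; each $x^{-1}\rho-\rho$ is a $\bz$-linear combination of simple roots, hence lies in the root lattice $Q$, and $Q$ pairs integrally with $x_P$ because $\alpha_j(x_i)=\delta_{i,j}$. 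Now set $\psi(x):=(\rho-x^{-1}\rho)(x_P)$. Recall $\rho-x^{-1}\rho$ is the sum of the (finitely many) positive roots $\beta$ with $x\beta$ negative, and for any positive root $\beta$ the number $n_P(\beta):=\beta(x_P)$ --- the sum of the coefficients of $\beta$ along the simple roots not in $\Delta(P)$ --- is a nonnegative integer; hence $\psi(x)\in\bz_{\geq0}$ and $\psi(e)=0$. A one-line computation rewrites \eqref{10n} as
\[
\epsilon_P^u\odot\epsilon_P^v=\sum_{w\in W^P}\tau^{\,\psi(w)-\psi(u)-\psi(v)}\,n^w_{u,v}\,\epsilon_P^w ,
\]
so that assertion (b) is precisely the inequality $\psi(w)\ge\psi(u)+\psi(v)$ whenever $n^w_{u,v}\ne0$. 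For associativity, extend scalars to $\bz[\tau,\tau^{-1}]$ and let $\Phi$ be the $\bz[\tau,\tau^{-1}]$-linear automorphism of $H^*(X_P,\bz)\otimes_{\bz}\bz[\tau,\tau^{-1}]$ with $\Phi(\epsilon_P^w)=\tau^{-\psi(w)}\epsilon_P^w$. Comparing coefficients of basis elements gives $\Phi(a\odot b)=\Phi(a)\cdot\Phi(b)$ and $\Phi(\epsilon_P^e)=\epsilon_P^e$, so $\Phi$ is a unital isomorphism from the $\odot$-ring onto the same module equipped with the ordinary cup product; hence $\odot$ is associative with unit $\epsilon_P^e$. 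Once (b) is known, all $\odot$-products of basis elements have coefficients in $\bz[\tau]$, so $H^*(X_P,\bz)\otimes\bz[\tau]$ is a $\odot$-subring and inherits associativity; in particular the specialization $\tau=0$ defining $\odot_0$ is legitimate. (Everything here is parameter-by-parameter identical to the finite case of [BK]; the only difference is that a single parameter $\tau$ occurs.)

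There remains the inequality $\psi(w)\ge\psi(u)+\psi(v)$ when $n^w_{u,v}\ne0$, which is the heart of the matter. Pulling back along $X=G^{\min}/B\to X_P$ identifies $n^w_{u,v}$ (for $u,v,w\in W^P$) with a structure constant of the ordinary cohomology ring $H^*(X,\bz)$; for degree reasons $\ell(w)=\ell(u)+\ell(v)$, and by Proposition \ref{prop5} the Schubert intersection $g_1X^{u}\cap g_2X^{v}\cap X_w$ is nonempty for all $g_1,g_2\in G^{\min}$, hence so is $g_1X_P^{u}\cap g_2X_P^{v}\cap X_w^P$. Choosing $g_1,g_2$ generic, so that $g_1X_P^{u}\cap X_w^P$ and $g_2X_P^{v}\cap X_w^P$ have the expected dimensions $\ell(w)-\ell(u)$ and $\ell(w)-\ell(v)$ inside the projective variety $X_w^P$ and meet transversally at a general point $p$, the equality $\ell(u)+\ell(v)=\ell(w)=\dim X_w^P$ gives a direct sum decomposition
\[
T_p(X_w^P)=\bigl(T_p(X_w^P)\cap T_p(g_1X_P^{u})\bigr)\oplus\bigl(T_p(X_w^P)\cap T_p(g_2X_P^{v})\bigr).
\]
Following [BK, \S6], one now brings in the grading of the tangent space of $X_P$ determined by the cocharacter $t\mapsto t^{x_P}$; under the resulting dictionary the numbers $\psi(u),\psi(v),\psi(w)$ are recovered as $x_P$-weighted dimensions of the tangent spaces in the display, and comparing these weighted dimensions (via the comparison of $X_w^P$ against the two opposite Schubert varieties) yields $\psi(w)\ge\psi(u)+\psi(v)$. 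The only genuinely new point in the symmetrizable Kac--Moody setting is that this transversality statement and the tangent-space bookkeeping must be carried out for the finite-dimensional Schubert varieties sitting inside the ind-variety $X_P$ --- which is precisely what Proposition \ref{prop5} and the coherent-sheaf machinery developed before it are designed to make available --- and I expect that infinite-dimensional adaptation to be the main obstacle, the rest being formal.
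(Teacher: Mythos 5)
Your part (a) is fine: the diagonal rescaling $\Phi(\epsilon_P^w)=\tau^{-\psi(w)}\epsilon_P^w$ conjugating $\odot$ into the ordinary cup product over $\bz[\tau,\tau^{-1}]$ is exactly the argument of [BK, Proof of Proposition 17(b)] that the paper invokes, and your verification of commutativity and of the integrality of the exponent is correct.

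The problem is part (b), which you yourself call ``the heart of the matter'' and then do not actually prove. Your argument rests on choosing $g_1,g_2$ generic so that $g_1X_P^{u}\cap X_w^P$ and $g_2X_P^{v}\cap X_w^P$ have the expected dimensions and meet transversally at a general point of $X_w^P$. In the finite-dimensional case this is Kleiman's transversality theorem; in the symmetrizable Kac--Moody case no such statement is available off the shelf, and it is not supplied by Proposition \ref{prop5} or by the coherent-sheaf machinery preceding it: that machinery yields only \emph{nonemptiness} of the triple intersection, by showing that emptiness would kill a $K$-theoretic structure constant, and says nothing about dimensions or tangent spaces of the intersection. (Note also that the opposite Schubert varieties $X^u_P$ live naturally in the thick flag variety $\bar{X}_P$ and are of finite codimension there, so even formulating ``expected dimension'' for $gX^u_P\cap X^P_w$ needs care.) Your closing sentence, that you ``expect the infinite-dimensional adaptation to be the main obstacle, the rest being formal,'' concedes precisely the missing step. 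Moreover, the passage from the direct-sum decomposition of $T_p(X_w^P)$ to the inequality $\psi(w)\geq\psi(u)+\psi(v)$ is only gestured at; even in the finite case it requires the explicit tangent-space weight computations of [BK], not merely the decomposition. The paper sidesteps all of this by proving (b) along the lines of [BK, Theorem 43]: it introduces the filtration $\ca_m=\bigoplus_{(\rho-w^{-1}\rho)(x_P)\geq m}\bc\,\epsilon_P^w$, identifies it with the manifestly multiplicative filtration $\bar\cf_{2m}$ of $H^*(X_P,\bc)\simeq H^*(\fg,\fl)$ coming from the $P$-relative height filtration of the Koszul complex (using degeneration of the associated spectral sequence at $E_1$, a dimension count, and Kumar's harmonic representatives to get $\ca_m\subset\bar\cf_{2m}$), and concludes that $\ca$ is multiplicative --- which is exactly the inequality $\psi(w)\geq\psi(u)+\psi(v)$ whenever $n^w_{u,v}\neq 0$. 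Unless you actually establish the generic transversality statement in the ind-variety $X_P$, your proof of (b) is incomplete.
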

\begin{proof} The proof of the associativity of $\odot$ is identical to the proof given in [BK, Proof of Proposition 17 (b)]. 

(b)  The proof of this part follows the proof of [BK, Theorem 43]. Consider the
decreasing filtration $\ca =\{\ca_m\}_{m\geq 0}$ of
$H^*(X_P, \bc )$ defined as follows:
  \[
\ca_m := \bigoplus_{w\in W^P:(\rho-w^{-1}\rho) (x_P)\geq m} \bc \epsilon_P^w.
  \]
A priori $\{\ca_m\}_{m\geq 0}$ may not be a multiplicative filtration. 

We next introduce another filtration $\{\bar\cf_m\}_{m\geq 0}$ of
$H^*(X_P,\bc )$ in terms of the Lie algebra cohomology.  Recall
that  $H^*(X_P,\bc )$ can be identified canonically with the Lie
algebra cohomology $H^*(\fg ,\fl )$, where $\fl$ is the Lie algebra of the Levi subgroup
$L$ of $P$ (cf. [K$_2$, Theorem 1.6]).  The underlying cochain
complex $C\u. =C\u. (\fg ,\fl )$ for $H^*(\fg ,\fl )$ can be
written as
  \[
C\u. := [\wed\u. (\fg /\fl )^*]^{\fl} = \Hom_{\fl} \bigl( \wed\u.
(\fu_P)\otimes\wed\u. (\fu_P^-), \bc \bigr),
  \]
where $\fu_P$ (resp. $\fu_P^-$) is the nil-radical of the Lie algebra of $P$ (resp. the opposite parabolic subgroup $P^-$). 
Define a decreasing multiplicative filtration $\cf =\{\cf_m\}_{m\geq 0}$ of the
cochain
complex $C\u.$ by subcomplexes:
  \[
\cf_m := \Hom_{\fl}\Biggl( \frac{\wed\u. (\fu_P)\otimes\wed\u.
(\fu^-_P)}{\bigoplus_{s+t\leq m-1}
\wed\u._{(s)}(\fu_P)\otimes\wed\u._{(t)}(\fu^-_P)}, \bc\Biggr) ,
  \]
where $\wed\u._{(s)}(\fu_P)$ (resp. $\wed\u._{(s)}(\fu^-_P)$) denotes the
subspace of $\wed\u. (\fu_P)$ (resp. $\wed\u. (\fu^-_P)$) spanned by the
$\fh$-weight vectors of weight $\beta$ with {\em $P$-relative height}
$$\text{ht}_P (\beta ) := \mid\beta (x_P)\mid=s.$$

Now, define the filtration $\bar\cf =\{ \bar\cf_m\}_{m\geq 0}$ of
$H^*(\fg ,\fl )\simeq H^*(X_P, \bc)$ by
  \[
\bar\cf_m := \text{Image of } H^*(\cf_m) \to H^*(C\u. ).
  \]
The filtration $\cf$ of $C\u.$ gives rise to the cohomology
spectral sequence $\{ E_r\}_{r\geq 1}$ converging to $H^*(C\u.
)=H^*(X_P,\bc )$.  By [K$_3$, Proof of Proposition 3.2.11], for any $m\geq
0$,
  \[
E^m_1 = \bigoplus_{s+t=m} [H\u._{(s)}(\fu_P)\otimes
H\u._{(t)}(\fu^-_P)]^{\fl},
  \]
where $H\u._{(s)}(\fu_P)$ denotes the cohomology of the subcomplex
$(\wed\u._{(s)}(\fu_P))^*$ of the standard cochain complex
$\wed\u. (\fu_P)^*$ associated to the Lie algebra $\fu_P$ and
similarly for $H\u._{(t)}(\fu^-_P)$.  Moreover, by loc. cit., the
spectral sequence degenerates at the $E_1$ term, i.e.,
  \begin{equation} \label{eqn10.1}
E_1^m = E^m_{\infty}.
  \end{equation}
Further, by the definition of $P$-relative height,
  \[
[H\u._{(s)}(\fu_P) \otimes H\u._{(t)}(\fu^-_P)]^{\fl} \neq 0 \Rightarrow
s=t.
  \]
Thus,
  \begin{align*}
E^m_1 &= 0, \qquad\quad\text{unless $m$ is even and}\\
E_1^{2m} &= [H\u._{(m)}(\fu_P) \otimes H\u._{(m)}(\fu^-_P)]^{\fl} .
  \end{align*}
In particular, from (\ref{eqn10.1}) and the general properties of spectral sequences
(cf. [K$_3$, Theorem E.9]), we have a canonical algebra isomorphism:
  \begin{equation} \label{eqn10.2}
\gr (\bar\cf ) \simeq \bigoplus_{m\geq 0} \bigl[ H\u._{(m)}(\fu_P) \otimes
H\u._{(m)}(\fu^-_P)\bigr]^{\fl} ,
  \end{equation}
where $\bigl[ H\u._{(m)}(\fu_P) \otimes H\u._{(m)}(\fu^-_P)\bigr]^{\fl}$
sits inside $\gr(\bar\cf )$ precisely as the homogeneous part of degree
$2m$; homogeneous parts of $\gr(\bar\cf )$ of odd degree being zero.

Finally, we claim that, for any $m\geq 0$,
  \begin{equation}  \label{eqn10.3}
\ca_m = \bar\cf_{2m} :
  \end{equation}

Following Kumar [K$_1$], take the d-$\partial$ harmonic representative
 $\hat{s}^w$ in $C\u.$ for the cohomology class $\epsilon_P^w$.  An
explicit expression is given in [K$_1$, Proposition 3.17].  From this explicit expression, we easily see that
  \begin{equation}  \label{eqn10.4}
\ca_m \subset \bar\cf_{2m}, \,\,\,\text{for all}\,\, m\geq 0.
  \end{equation}
Moreover, from the definition of $\ca$, for any $m\geq 0$,
  \[
\dim \frac{\ca_m}{\ca_{m+1}} = \# \bigl\{ w\in W^P : (\rho
-w^{-1}\rho) (x_P)=m\bigr\} .
  \]
Also, by the isomorphism (\ref{eqn10.2})  and [K$_3$, Theorem 3.2.7],
  $$
\dim \frac{\bar\cf_{2m}}{\bar\cf_{2m+1}} = \# \bigl\{ w\in W^P: (\rho-w^{-1}\rho)
(x_P) = m\bigr\}. $$

Thus,
  \beqn \label{eq77}
\dim \frac{\ca_m}{\ca_{m+1}} = \dim \frac{\bar\cf_{2m}}{\bar\cf_{2m+1}} .
  \eeqn
Of course, 
\beqn\label{eq78} \ca_0 = \bar\cf_0.
\eeqn
Thus,  combining the equations \eqref{eqn10.4}, \eqref{eq77} and \eqref{eq78}, we get \eqref{eqn10.3}. 
It is easy to see that the filtration $\{\bar{\cf}_{2m}\}_{m\geq 0}$ is multiplicative and hence so is
$\{\ca_m \}_{m\geq 0}$. This proves the (b) part of the proposition.
 \end{proof}

The  cohomology of $X_P$
 obtained by setting ${\tau}=0$ in
$(H^*(X_P, \Bbb Z)\otimes\Bbb{Z}[{\tau}],\odot)$ is denoted by
$(H^*(X_P, \Bbb Z),\odot_0)$. Thus, as a $\Bbb Z$-module, it  is the same as the singular cohomology
 $H^*(X_P, \Bbb Z)$ and under the product $\odot_0$ it is associative (and commutative).
\end{definition}

The following conjecture is a generalization of the corresponding result in the finite case due to Belkale-Kumar [BK, Theorem 22].
\begin{conjecture} \label{conj1}
Let $G$ be any indecomposable symmetrizable Kac-Moody  group (i.e., its generalized Cartan matrix is indecomposable, cf. [K$_3$, $\S$ 1.1]) 
and let  $(\lambda_1, \dots, \lambda_s, \mu)\in P_+^{s+1}$. Assume further that none of $\lambda_j$  is $W$-invariant and $\mu-\sum_{j=1}^s \lambda_j\in Q$, where $Q$ is the root lattice of $G$. 
Then, the following are equivalent:

(a)  $(\lambda_1, \dots, \lambda_s, \mu)\in \Gamma_s$.

(b) For every standard maximal parabolic subgroup $P$ in $G$ and every choice of
$s+1$-tuples  $(w_1, \dots, w_s, v)\in (W^P)^{s+1}$ such that $\epsilon_P^v$ occurs with coefficient $1$ in 
the deformed product
$$\epsilon_P^{w_1}\odot_0\, \cdots \,\odot_0 \epsilon_P^{w_s}
\in \bigl(H^*(X_P,\Bbb{Z}), \odot_0\bigr),$$
  the following inequality  holds:
    \[\label{eqn29}
\bigl(\sum_{j=1}^s \lambda_j(w_jx_{P})\bigr)-\mu(vx_P)\geq 0, \tag{$I^P_{(w_1,\dots, w_s,v)}$}
\]
where $\alpha_{i_P}$ is the (unique) simple root in $\Delta\setminus \Delta (P)$
and $x_P:=x_{i_P}$.
\end{conjecture}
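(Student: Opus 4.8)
The plan is to prove the two implications separately, treating (a)$\Rightarrow$(b) as an essentially formal consequence of Theorem \ref{thm1} and concentrating the work on (b)$\Rightarrow$(a). For (a)$\Rightarrow$(b): given $(\lambda_1,\dots,\lambda_s,\mu)\in\Gamma_s$, fix a standard maximal parabolic $P$ with $\Delta\setminus\Delta(P)=\{\alpha_{i_P}\}$ and a tuple $(w_1,\dots,w_s,v)\in(W^P)^{s+1}$ for which $\epsilon_P^v$ occurs with coefficient $1$ in $\epsilon_P^{w_1}\odot_0\cdots\odot_0\epsilon_P^{w_s}$. By the very definition of $\odot_0$, the ordinary cup product $\epsilon_P^{w_1}\cdots\epsilon_P^{w_s}$ in $H^*(X_P,\bz)$ then contains $\epsilon_P^v$ with a nonzero coefficient; since $X\to X_P$ induces an injective ring homomorphism $H^*(X_P,\bz)\hookrightarrow H^*(X,\bz)$ sending $\epsilon_P^w\mapsto\epsilon^w$ for $w\in W^P$, it follows that $n^v_{w_1,\dots,w_s}\neq 0$ in $H^*(X,\bz)$. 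Applying Theorem \ref{thm1} with $x_i:=x_{i_P}$ (so that $x_P=x_{i_P}$) yields exactly the inequality $I^P_{(w_1,\dots,w_s,v)}$. One should also note here that the hypothesis $\mu-\sum_j\lambda_j\in Q$ is harmless for this direction and only enters the converse.

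For (b)$\Rightarrow$(a) I would follow the two-step strategy of Belkale--Kumar [BK]. \emph{Step 1} is to establish the (in this generality still conjectural) intermediate statement that the full family of inequalities of Theorem \ref{thm1} --- taken over all $(u_1,\dots,u_s,v)\in W^{s+1}$ with $n^v_{u_1,\dots,u_s}\neq 0$ and all $x_i$ --- is already \emph{sufficient} for membership in $\Gamma_s$. The natural route is a converse to the Hilbert--Mumford argument used to prove Theorem \ref{thm1}: working over the finite-dimensional approximations $X_n$ (and correspondingly $(X^-_n)^s\times X_n$) furnished by the Schubert filtration of $X$, together with a parabolic completion of $G^{\min}$, one argues that if all the numbers $\mu^{\mathcal{L}}(\bar x,\delta)$ are $\geq 0$ at a generic point $\bar x$ --- equivalently, if all inequalities of Theorem \ref{thm1} hold --- then $\bar x$ is semistable, whence $H^0\!\big((X^-)^s\times X,\mathcal{L}^N\big)^{G^{\min}}\neq 0$ for some $N$, which by \eqref{ne3.3} gives $L(N\mu)\subset L(N\lambda_1)\otimes\cdots\otimes L(N\lambda_s)$. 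This requires identifying the optimal destabilizing one-parameter subgroups with the conjugates $g\,t^{x_i}g^{-1}$ of Lemma \ref{lem2}, plus an irredundancy/transversality input of Kapovich--Leeb--Millson / Berenstein--Sjamaar type adapted to the ind-variety $X$ (and monitored as $n\to\infty$).

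\emph{Step 2} is the Belkale--Kumar reduction: among all the inequalities produced by Step 1, only those attached to \emph{maximal} parabolics $P$ and to tuples with $\odot_0$-coefficient exactly $1$ are actually needed, the rest being redundant. Here one exploits the Lie-algebra-cohomology description of $\odot_0$ recorded in Section 7 --- the filtration $\bar{\mathcal{F}}$, the degeneration \eqref{eqn10.1} of the associated spectral sequence and the algebra isomorphism \eqref{eqn10.2} --- to recognize that a tuple has $\odot_0$-coefficient $1$ precisely when the corresponding intersection of (shifted) Schubert varieties is ``Levi-movable'', and then a dimension/$P$-relative-height count shows that an inequality with $n^v_{\cdots}=1$ but positive $\tau$-exponent cuts out a face which is not a facet, hence is implied by the facet inequalities; the facets in turn correspond, via the characterization of well-covering pairs, to Levi-movable data on maximal parabolics. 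The hypotheses of the conjecture are used exactly in this step: ``no $\lambda_j$ is $W$-invariant'' rules out degenerate factors on which $\mathcal{L}_{B^-}(\lambda_j)$ is essentially trivial (so that the GIT problem is nondegenerate), and $\mu-\sum_j\lambda_j\in Q$ is the necessary numerical constraint \eqref{eq1001}.

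The main obstacle is Step 1: the sufficiency of the Theorem \ref{thm1} inequalities is genuinely open in the symmetrizable Kac--Moody setting, precisely because $X$ is an infinite-dimensional ind-variety on which GIT semistability and the Hilbert--Mumford criterion are not directly available. One must either truncate to the projective Schubert varieties $X_n$ and control the behaviour of semistability and of invariant sections under $n\to\infty$ --- where the imaginary-root directions, absent in the finite case, create new subtleties --- or develop a Kac--Moody Kirwan--Ness theory from scratch. This is why, in the body of the paper, Conjecture \ref{conj1} is verified only for $A_1^{(1)}$ and $A_2^{(2)}$ (Theorems \ref{thm7.5} and \ref{thm8.6}) by explicitly computing $\Gamma_2$ via the Virasoro input (Lemma \ref{virasoro}, Propositions \ref{tensor} and \ref{maximum}), which bypasses Step 1 in those rank-two cases rather than carrying out the general argument sketched above.
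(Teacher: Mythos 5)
The statement you are asked to prove is labelled a \emph{conjecture} in the paper, and the paper contains no proof of it in general; the only parts actually established there are the implication (a)$\Rightarrow$(b) (recorded in the remark immediately following Conjecture \ref{conj1} as a direct consequence of Theorem \ref{thm1}) and the verification of the full equivalence for $s=2$ in the two rank-two affine cases (Theorems \ref{thm7.5} and \ref{thm8.6}). Your treatment of (a)$\Rightarrow$(b) is correct and is essentially the paper's: a nonzero coefficient of $\epsilon_P^v$ in the $\odot_0$-product forces the $\tau$-exponent to vanish, hence $n^v_{w_1,\dots,w_s}\neq 0$ for the ordinary cup product in $H^*(X_P,\bz)$, and the injective ring map $H^*(X_P,\bz)\hookrightarrow H^*(X,\bz)$ sending $\epsilon_P^w\mapsto\epsilon^w$ puts you in the setting of Theorem \ref{thm1} with $x_i=x_{i_P}$.

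For (b)$\Rightarrow$(a), however, what you offer is a programme rather than a proof, and you say so yourself: Step 1 (sufficiency of the full family of Theorem \ref{thm1} inequalities, via a converse Hilbert--Mumford argument on the ind-variety $X$) is genuinely open in the symmetrizable Kac--Moody setting, and Step 2 (the Belkale--Kumar irredundancy reduction to maximal parabolics and $\odot_0$-coefficient $1$) presupposes Step 1. So the proposal does not establish the statement, which is consistent with its status as a conjecture. It is worth emphasizing that even in the two cases where the paper does prove the equivalence, it does \emph{not} follow your GIT route at all: it computes $\Gamma_2$ explicitly by purely representation-theoretic means --- the identification $P(\lambda)=(\lambda+Q)\cap C_\lambda$, the classification of $\delta$-maximal components via Propositions \ref{tensor} and \ref{maximum}, and the Virasoro non-vanishing Lemma \ref{virasoro} --- and only afterwards matches the resulting inequalities against the $\odot_0$-data using the explicit cup products in $H^*(G/P_i)$. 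Your summary of this at the end is accurate, but it should be stated up front that the conjecture as a whole remains unproved by either route.
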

\begin{remark}  (a) By Theorem \ref{thm1}, the above inequalities $I^P_{(w_1,\dots, w_s,v)}$ are indeed satisfied for any 
$(\lambda_1, \dots, \lambda_s, \mu)\in \Gamma_s$. 

(b) If $G$ is an affine Kac-Moody group, then the condition 
that $\lambda\in P_+$ is $W$-invariant is equivalent to the condition that $\lambda(c)=0$.
\end{remark}

\begin{theorem} \label{thm7.5} Let $\mathfrak{g}=\widehat{\mathfrak{sl}_{2}}$. 
Let $\lambda,\mu,\nu \in P_+$ be such that $\lambda+\mu-\nu\in Q$ and both of $\lambda(c)$ and $\mu(c)$ are nonzero. Then, the following are equivalent:

(a)   $(\lambda,\mu,\nu) \in \Gamma_2$.

(b) The following set of inequalities is satisfied for all $w\in W$ and $i=0,1$.
\begin{align*}
\lambda(x_i)+\mu(wx_i)-\nu(wx_i) &\geq 0, \,\,\,\text{and}\\
\lambda(wx_i)+\mu(x_i)-\nu(wx_i) &\geq 0.
\end{align*}
In particular, Conjecture \ref{conj1} is true for $\mathfrak{g}=\widehat{\mathfrak{sl}_{2}}$ and $s=2$.
\end{theorem}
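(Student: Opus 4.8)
The plan is to prove the equivalence (a) $\Leftrightarrow$ (b) by first reducing to the case of $\delta$-maximal components, then using the explicit combinatorics developed in Section 5 (especially Proposition \ref{maximum}, Lemma \ref{lemma5.9} and the Non-Cancellation Corollary \ref{corcancel}) together with the Virasoro input Lemma \ref{virasoro}, and finally translating the resulting numerical criterion into the stated inequalities. The implication (a) $\Rightarrow$ (b) is immediate from Theorem \ref{thm1} (together with Remark (a) after Conjecture \ref{conj1}), since for $\widehat{\mathfrak{sl}_2}$ there are exactly two standard maximal parabolics $P_0, P_1$, and for each the set $W^{P_i}$ and the one-coefficient deformed products $\epsilon^{w_1}_{P_i}\odot_0\epsilon^{w_2}_{P_i}$ can be read off; the inequalities $I^{P_i}_{(w_1,w_2,v)}$ are then precisely the displayed inequalities with $x_i = x_{i_{P_i}}$ (after using that $n^v_{w_1,w_2}\neq 0$ in ordinary cohomology of a flag variety of affine $\mathfrak{sl}_2$ is automatic in the relevant range). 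So the substance is (b) $\Rightarrow$ (a).

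For (b) $\Rightarrow$ (a): first dispose of the normalizations. By tensoring with one-dimensional modules $L(k\delta)$ (cf. \eqref{eq6.4.0}), we may replace $\nu$ by $\bar\nu + \tilde n\delta$ where $\bar\nu = \pi(\nu)\in\bar P_+$ and $\tilde n$ is the unique integer for which $L(\bar\nu+\tilde n\delta)$ is the $\delta$-maximal component of $L(\lambda)\otimes L(\mu)$ through $\bar\nu$; such a $\tilde n$ exists by Proposition \ref{tensor} combined with Theorem \ref{thm6.2} (and Corollary \ref{cor5.1}), and by Lemma \ref{lemma5.1'} it is computed by the maximum over $T^{\lambda,\mu}_{\bar\nu}$ of $S_{\bar\nu,\mu,\cdot}$ restricted to $\varepsilon(v_{\cdot})=1$, provided no cancellation occurs with the $\varepsilon=-1$ terms — which is exactly what Corollary \ref{corcancel} guarantees (after possibly swapping $\lambda\leftrightarrow\mu$, which only permutes the two families of inequalities in (b)). Thus $(\lambda,\mu,\nu)\in\Gamma_2$ if and only if $\nu = \bar\nu + n\delta$ with $n\le\tilde n$, together with the integrality/coset conditions $\lambda+\mu-\nu\in Q$ which are assumed; and then the saturation question is controlled by the Virasoro string identity \eqref{e6.4} via Lemma \ref{virasoro}, exactly as in the proof of Corollary \ref{cor6.4}. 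So it remains to show: the inequalities in (b) hold precisely when $\nu(d)\le \bar\nu(d)+\tilde n$.

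The heart of the argument is therefore the identity $\tilde n = $ (an explicit expression in $m',m'',j',j'',j$), proved via Proposition \ref{maximum} and Lemma \ref{lemma5.9}: the maximum of $S_{\bar\nu,\mu,\lambda}$ over the $\varepsilon=1$ locus is achieved at $\pi(\lambda)=\lambda'+\tfrac12(j-j'-j'')\alpha$ and equals $F(0,j',m'',j'',j)$ in the notation there, while over the $\varepsilon=-1$ locus it is $\max(G_{\bz}(0),G_{\bz}(1))$; Corollary \ref{corcancel} says these two maxima are unequal for one of the two orderings of $\{\lambda,\mu\}$. Then $\tilde n$ is $F(0,\cdot)$ minus $\bar\nu(d)$, and one checks by a direct (but bounded) computation — using $\rho = 2\Lambda_0+\tfrac12\alpha$, the affine Weyl group action \eqref{e5.3}, and Lemma \ref{lemma5.2'} — that the single scalar inequality $\nu(d)\le\bar\nu(d)+\tilde n$ is equivalent to the conjunction over all $w\in W$, $i\in\{0,1\}$ of the two displayed inequalities. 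Concretely, running $w$ over $W = \stackrel{\circ}{W}\ltimes\bz\alpha^\vee$ and evaluating $x_i$ against $w(\lambda), w(\mu), w(\nu)$ produces a family of linear inequalities in the coordinates of $\nu$ whose binding constraint is exactly the one coming from the extremal $w$ realizing the maximum in Proposition \ref{maximum}; all the others are slack. The main obstacle I anticipate is precisely this last translation step — matching the Hilbert-Mumford-type inequalities $I^{P_i}_{(w_1,w_2,v)}$ term-by-term with the extremal-$\delta$-weight bound $F(0,\cdot)$ — since it requires carefully identifying which $w\in W$ gives the tight inequality and checking the sign bookkeeping between $x_0$ and $x_1$; the combinatorial maximum having already been pinned down in Proposition \ref{maximum}, this is bookkeeping rather than a new idea, but it is where errors are easiest to make. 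Finally, the parenthetical assertion "Conjecture \ref{conj1} is true for $\widehat{\mathfrak{sl}_2}$, $s=2$" follows by observing that the inequalities in (b) are literally the list $I^{P_0}_{(w_1,w_2,v)}$ and $I^{P_1}_{(w_1,w_2,v)}$ once one verifies that for each maximal parabolic $P_i$ of affine $\mathfrak{sl}_2$ the coefficient-one condition in $\odot_0$ selects exactly the pairs $(w_1,w_2,v)$ appearing above — a finite check using the structure of $H^*(X_{P_i})$ and the degree bookkeeping in the definition of $\odot_0$.
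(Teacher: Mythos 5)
Your architecture is broadly the right one, and for the reduction it tracks the paper: the paper also first replaces $(\lambda,\mu,\nu)\in\Gamma_2$ by the condition $n:=\min(n_1,n_2)\geq 0$, where $n_1,n_2$ are the unique integers with $\nu-\mu+n_1\delta\in P^o(\lambda)$ and $\nu-\lambda+n_2\delta\in P^o(\mu)$ (this is exactly the combination of Theorem \ref{thm6.2}, Proposition \ref{tensor}, Corollary \ref{corcancel} and Lemma \ref{virasoro} that you describe), and it handles the ``In particular'' statement by the same finite check on $H^*(G/P_0)$ and $H^*(G/P_1)$ (the products are $\epsilon^n_{P_i}\cdot\epsilon^m_{P_i}=\binom{n+m}{n}\epsilon^{n+m}_{P_i}$, so coefficient one forces one factor to be $\epsilon^0_{P_i}$, and $\odot_0$ with $\epsilon^0_{P_i}$ is the cup product). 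Your derivation of (a)$\Rightarrow$(b) directly from Theorem \ref{thm1} with $(u_1,u_2,v)=(1,w,w)$ and $(w,1,w)$ is also fine.

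The genuine gap is at the step you yourself flag as the ``main obstacle'': passing from the single condition $n\geq 0$ to the family of inequalities in (b). You propose to do this by computing $\tilde n$ explicitly as a value of $F(0,\cdot)$ and matching the binding inequality with the extremal $w$ from Proposition \ref{maximum}; this computation is not carried out, and it is not mere bookkeeping --- the function $F$ is piecewise quadratic in several parameters and identifying which $w$ is tight for each sign pattern would be a substantial case analysis. The paper avoids this entirely with two short observations that are absent from your proposal: first, by [Kac, Proposition 12.5(a)] (if $\gamma+k\delta\in P(\lambda)$ for $k\in\bz_+$ then $\gamma\in P(\lambda)$), one has $n\geq 0$ if and only if $\nu\in\bigl(P(\lambda)+\mu\bigr)\cap\bigl(P(\mu)+\lambda\bigr)$ (equation \eqref{eq7.4.2}); second, the weight system itself has the convex--geometric description $P(\lambda)=(\lambda+Q)\cap C_\lambda$ with $C_\lambda=\{\gamma:\lambda(x_i)-\gamma(wx_i)\geq 0\ \forall w,i\}$ (equation \eqref{eq7.4.3}, proved by reducing to dominant $\gamma$ and using Kac's proposition again). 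Substituting $\gamma=\nu-\mu$ and $\gamma=\nu-\lambda$ then yields the two displayed families of inequalities with no further computation. Without this bridge --- or a completed version of the direct computation you sketch --- the implication (b)$\Rightarrow$(a) is not established.
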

\begin{proof}
By Lemma \ref{lemma5.1}, there exist (unique)  $n_1, n_2\in \bz$  such that 
\[
\nu - \mu + n_1 \delta \in P^o(\lambda), \,\,\,\,\text{and}\,\,\,
\nu - \lambda +n_2 \delta \in P^o(\mu).
\]
Let $n:=$ min $(n_1,n_2)$. 
By our description of the $\delta$-maximal components as in Theorem \ref{thm6.2} applied to $\bar{\lambda},\bar{\mu}, \bar{\nu}$
and using the identity \eqref{eq6.4.0}, we see that 
$L(\nu+n\delta)$ is a $\delta$-maximal component of $L(\lambda)\otimes L(\mu)$. Thus, by the equation \eqref{eq6.4.1}, for any $N\geq 1$, 
$L(N\nu+Nn\delta)$ is a $\delta$-maximal component of $L(N\lambda)\otimes L(N\mu)$. In particular, by Proposition \ref{tensor} and Lemma 
\ref{virasoro}, 
\beqn\label{eq7.4.1} L(N\nu) \subset L(N\lambda)\otimes L(N\mu) \,\,\,\text{ for some}\,\, N>1\,\,\,\text{ if and only if}\,\, n\geq 0.
\eeqn
By [Kac, Proposition 12.5 (a)], if a weight $\gamma+k\delta\in P(\lambda)$ (for some $k\in \bz_+$), then $\gamma\in P(\lambda)$. Thus, 
\beqn\label{eq7.4.2} n\geq 0 \,\,\,\text{ if and only if}\,\, \nu\in \bigl(P(\lambda)+\mu\bigr)\cap \bigl(P(\mu)+\lambda\bigr).
\eeqn
We next show that 
\beqn\label{eq7.4.3} P(\lambda)=(\lambda + Q)\cap C_\lambda,
\eeqn
where $C_\lambda:=\{\gamma\in \fh^*: \lambda(x_i)-\gamma(wx_i)\geq 0 \,\,\,\text{for all}\,\, w\in W \,\,\,\text{and all} \,\, x_i\}$.
Clearly, $$
 P(\lambda)\subset(\lambda + Q)\cap C_\lambda.$$
Since $\lambda+Q$ and $C_\lambda$ are $W$-stable, and $\lambda+Q$ is contained in the Tits cone (by [K$_3$, Exercise 13.1.E.8(a)]),
$(\lambda+Q)\cap C_\lambda= W\cdot \bigl((\lambda+Q)\cap C_\lambda\cap P_+\bigr)$.

Conversely,
take $\gamma\in  (\lambda + Q)\cap C_\lambda \cap P_+$. Then, 
$(\lambda-\gamma) (x_i)\geq 0$ and $(\lambda-\gamma) (c)= 0$ and hence $\lambda-\gamma \in \oplus_i\,\bz_+\alpha_i$, i.e., 
$\lambda \geq \gamma$. Thus, by [Kac, Proposition 12.5(a)], $\gamma \in P(\lambda)$. This proves \eqref{eq7.4.3}. 
Now, combining \eqref{eq7.4.1}, \eqref{eq7.4.2} and \eqref{eq7.4.3}, we get 
$ L(N\nu) \subset L(N\lambda)\otimes L(N\mu)$ for some $N>1$ if and only if for all 
 $w\in W$ and $i=0,1$,
\[
\lambda(x_i)-(\nu-\mu)(wx_i)\geq 0, \,\,\,\text{and}\,\,\,
\mu(x_i)-(\nu - \lambda)(wx_i) \geq 0.\]
This proves the equivalence of (a) and (b) in the theorem. 

To prove the `In particular' statement of the theorem, let $P_0$ (resp. $P_1$) be the maximal parabolic subgroup  of $G=\widehat{\SL_{2}}$
with $\Delta(P_0)=\{\alpha_1\}$ (resp.  $\Delta(P_1)=\{\alpha_0\}$). For any $n\geq 0$, let 
$$w_n:=\dots s_0s_1s_0 \,\,(\text{$n$-factors})\,\,\,\text{and}\,\,\,v_n:=\dots s_1s_0s_1 \,\,(\text{$n$-factors}).$$
Then, by [K$_3$, Exercise 11.3.E.4], $H^*(G/P_0)$ has a $\bz$-basis $\{\epsilon^{n}_{P_0}\}_{n\geq 0}$, where 
$\epsilon^{n}_{P_0}:=\epsilon^{w_n}_{P_0}$. Moreover,
$$\epsilon^{n}_{P_0}\cdot \epsilon^{m}_{P_0}=\binom {n+m}{n}\epsilon^{n+m}_{P_0}.$$
In particular, $\epsilon^{n+m}_{P_0}$ appears with coefficient one in $\epsilon^{n}_{P_0}\cdot \epsilon^{m}_{P_0}$ if and only if 
at least one of $n$ or $m$ is $0$. 

By using the diagram automorphism of $\widehat{\SL_{2}}$, one similarly gets that $H^*(G/P_1)$ has a $\bz$-basis $\{\epsilon^{n}_{P_1}\}_{n\geq 0}$, where 
$\epsilon^{n}_{P_1}:=\epsilon^{v_n}_{P_1}$, with the product given by
$$\epsilon^{n}_{P_1}\cdot \epsilon^{m}_{P_1}=\binom {n+m}{n}\epsilon^{n+m}_{P_1}.$$
Moreover, from the definition of the deformed product $\odot_0$,  clearly 
$$\epsilon^{0}_{P_0}\odot_0 \epsilon^{m}_{P_0}= \epsilon^{0}_{P_0}\cdot \epsilon^{m}_{P_0},$$
and similarly for $P_1$. From this the `In particular' statement of the theorem follows.
\end{proof}
\begin{remark} (1) It is easy to see that if $\lambda=m\delta$ for some $m\in \bz$, then the equivalence of (a) and (b) in the above theorem breaks down.

(2) Though we have proved Conjecture \ref{conj1} for $\widehat{\SL_{2}}$ only for $s=2$, it is quite likely that a similar proof will prove it for 
any $s$ (for $\widehat{\SL_{2}}$).
\end{remark}

\section{The $A_2^{(2)}$ case}
By a method similar to that of $A_1^{(1)}$,  we  handle the  $A_2^{(2)}$ case, with minor modifications where necessary. 
Write $\mathfrak{h}=\mathbb{C}c\oplus\mathbb{C}\alpha^{\vee}\oplus\mathbb{C}d$
and $\mathfrak{h}^{*}=\mathbb{C}\omega_{0}\oplus\mathbb{C}\alpha\oplus\mathbb{C}\delta$,
where $\alpha(\alpha^{\vee})=2$, $\delta(d)=1$, $\omega_{0}(c)=1$,
and all other values are $0$.  Then $(\mathfrak{h},\{\alpha_0:=\delta-2\alpha, \alpha_1:=\alpha\}, \{\alpha_0^\vee:=c-\frac{1}{2}\alpha^{\vee},\alpha_1^\vee:=\alpha^{\vee}\})$ is a realization of the GCM 
\[ \left( \begin{array}{cc}
2 & -1\\
-4 & 2  \end{array} \right)\] 
of $A_2^{(2)}$.
The fundamental weights are $\omega_{0}$ and $\omega_{1}=\frac{1}{2}\omega_{0}+\frac{1}{2}\alpha$. This easily allows one to compute the dominant $\delta$-maximal weights. Analogous to Corollary \ref{cor5.1}, we have the following:
\begin{lemma}\label{domdelmaxweights}  Let $\lambda$ be a dominant integral weight. Then, the dominant $\delta$-maximal
weights of $L(\lambda)$ are the dominant weights of the form 
\[
P_+\cap \left\{ \lambda-j\alpha,\,\lambda+k(2\alpha-\delta),\,\lambda+\alpha-\delta+l(2\alpha-\delta)\,:\, j,k,l\in\mathbb{Z}_{\geq0}\right\} .
\]
Moreover,  $P^o(\lambda)$ is the $W$-orbit of the above.
\end{lemma}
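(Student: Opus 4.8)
The plan is to follow the proof of Proposition \ref{prop4.1} essentially verbatim, replacing the untwisted combinatorics with those of $A_2^{(2)}$. First I would dispose of the level-zero case: if $\lambda(c)=0$ then $L(\lambda)$ is one-dimensional and $P^o(\lambda)=\{\lambda\}=W\cdot\{\lambda\}$, while intersecting each of the three displayed families with $P_+$ also leaves only $\lambda$, so the assertion is trivial. So assume $\lambda(c)>0$. Next I would record the one piece of data that differs from the $A_1^{(1)}$ normalization, namely that in the given realization $\delta=\alpha_0+2\alpha_1$ (indeed $\alpha_0+2\alpha_1=(\delta-2\alpha)+2\alpha$), so that $Q=\bz\alpha_0\oplus\bz\alpha_1=\bz\alpha\oplus\bz\delta$. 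Finally, since $\delta$ is $W$-invariant and $P(\lambda)$ is $W$-stable, $\delta$-maximality is a $W$-equivariant condition; as $P(\lambda)\subset\lambda-Q$ lies in the Tits cone (using $\lambda(c)>0$, cf. [K$_3$, Exercise 13.1.E.8(a)]), every weight of $L(\lambda)$ is $W$-conjugate to a dominant one, so $P^o(\lambda)=W\cdot P^o(\lambda)_+$ with $P^o(\lambda)_+:=P^o(\lambda)\cap P_+$. Thus it suffices to compute $P^o(\lambda)_+$, which also yields the ``Moreover'' clause.

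The computation of $P^o(\lambda)_+$ is then a short exercise in dominance. By [Kac, Proposition 12.5(a)] (applicable since $\lambda(c)>0$), a dominant weight $\mu$ belongs to $P(\lambda)$ if and only if $\mu\leq\lambda$. Writing such a $\mu$ as $\mu=\lambda-p\alpha_0-q\alpha_1$ with $p,q\in\bz_+$, the weight $\mu+n\delta=\lambda-(p-n)\alpha_0-(q-2n)\alpha_1$ is again dominant, hence lies in $P(\lambda)$ exactly when $p\geq n$ and $q\geq 2n$. Consequently $\mu$ is $\delta$-maximal if and only if already $\mu+\delta\notin P(\lambda)$ (the conditions for $n\geq 2$ being strictly stronger), i.e. if and only if $p=0$ or $q\leq 1$. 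It then only remains to substitute $\alpha_1=\alpha$ and $\alpha_0=\delta-2\alpha$: the case $p=0$ gives $\mu=\lambda-q\alpha$; the case $q=0$ gives $\mu=\lambda+p(2\alpha-\delta)$; the case $q=1$, $p\geq 1$ gives $\mu=\lambda+\alpha-\delta+(p-1)(2\alpha-\delta)$ (and $q=1$, $p=0$ is already in the first family). Intersecting with $P_+$ produces exactly the set in the statement.

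I do not expect a genuine obstacle: the only input carrying real content is [Kac, Proposition 12.5(a)], used just as in the untwisted case, and everything else is bookkeeping in the rank-two root lattice. The two points deserving a moment's care are (i) that $\delta$-maximality reduces to the single condition $\mu+\delta\notin P(\lambda)$ rather than to the infinitely many conditions $\mu+n\delta\notin P(\lambda)$ --- immediate once one notes that the inequalities $p\geq n$, $q\geq 2n$ are nested in $n$ --- and (ii) the reduction $P^o(\lambda)=W\cdot P^o(\lambda)_+$, which rests on the standard fact that weights of an integrable highest weight module lie in the Tits cone and hence are $W$-conjugate to dominant ones. Neither is difficult, so the $A_2^{(2)}$ case should go through with only these cosmetic changes from the $A_1^{(1)}$ argument.
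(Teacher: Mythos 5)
Your proof is correct and is exactly the argument the paper intends: the paper states this lemma without proof as the $A_2^{(2)}$ analogue of Corollary \ref{cor5.1}, and your adaptation of the proof of Proposition \ref{prop4.1} --- writing $\mu=\lambda-p\alpha_0-q\alpha_1$, using [Kac, Proposition 12.5(a)] to reduce membership in $P(\lambda)$ to dominance plus $\mu\le\lambda$, and noting $\delta=\alpha_0+2\alpha_1$ so that $\delta$-maximality becomes $p=0$ or $q\le 1$ --- is the intended computation. The reduction $P^o(\lambda)=W\cdot P^o(\lambda)_+$ via the Tits cone and the separate treatment of the level-zero case are also handled correctly.
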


Again, to determine the saturated tensor cone, it is enough to describe the $\delta$-maximal components.  Thus, to determine the $\delta$-maximal components, by virtue of proposition \ref{tensor}, we must find the highest $\delta$-degree term in $\sum_{\lambda\in T_{\Lambda}^{\Lambda',\Lambda''}}\varepsilon(v_{\Lambda,\Lambda'',\lambda})c_{\Lambda',\lambda}e^{S_{\Lambda,\Lambda'',\lambda}\delta}$. This computation is done in a somewhat similar manner as in the $A_1^{(1)}$ case, but there are some important modifications.
First, we need to use two different piecewise smooth functions to describe the $\delta$-maximal weights of $L(\lambda)$. An upper function $A^+$  interpolates the $\delta$-maximal weights which are in the $W$-orbit of the dominant weights of the form
\[
\left\{ \lambda-j\alpha,\,\lambda+k(2\alpha-\delta)\,:\, j,k\in\mathbb{Z}_{\geq0}\right\}, 
\]
while another function $A^-$ interpolates the $\delta$-maximal weights in the $W$-orbit of the dominant weights of the form
\[
\left\{ \lambda-j\alpha,\,\lambda+\alpha-\delta+l(2\alpha-\delta)\,:\, j,l\in\mathbb{Z}_{\geq0}\right\}. 
\]

Now, all of the arguments made in the $\widehat{\mathfrak{sl}_2}$ case must be made for two extensions of $S_{\Lambda,\Lambda'',\lambda}$ to non-integral values, using $A^+$  and $A^-$ respectively. Let $\Lambda :=m_{0}\omega_{0}+m_{1}\omega_{1}$, $\Lambda' :=m'_{0}\omega_{0}+m'_{1}\omega_{1}$, and $\Lambda'' :=m''_{0}\omega_{0}+m''_{1}\omega_{1}$. The following result is an analogue of 
Proposition \ref{maximum} and Lemma \ref{lemma5.9} for the $A_2^{(2)}$ case.
\begin{prop}\label{maxS} Let $\Lambda, \Lambda', \Lambda''$ be as above. Assume that both of $\Lambda'(c)$ and $\Lambda''(c)>0$ and $\Lambda-\Lambda'-\Lambda''\in Q$, where 
$Q=\bz \alpha+\bz \delta$ is the root lattice of $A_2^{(2)}$ . 
Then, the  maximum $\mu^{\Lambda',\Lambda''}_\Lambda$ of the set
\[ 
\left\{ S_{\Lambda,\Lambda'',\lambda}:\;\lambda\in T_{\Lambda}^{\Lambda',\Lambda''},\;\varepsilon(v_{\Lambda,\Lambda'',\lambda})=1\right\}
\] occurs when $\lambda\equiv\Lambda'+\frac{1}{2}\left(m_{1}-m_{1}'-m_{1}''\right)\alpha \mod\mathbb{C}\delta$.
The maximum $\bar{\mu}^{\Lambda',\Lambda''}_\Lambda$ of the set 
\[
\left\{ S_{\Lambda,\Lambda'',\lambda}:\;\lambda\in T_{\Lambda}^{\Lambda',\Lambda''},\;\varepsilon(v_{\Lambda,\Lambda'',\lambda})=-1\right\}
\]
occurs when $\lambda\equiv\Lambda'-\bigl(\frac{1}{2}(m_{1}'+m_{1}''+m_{1})+1\bigr)\alpha \mod \mathbb{C}\delta $
or when $\lambda\equiv\Lambda'-\bigl(\frac{1}{2}(m_{1}'+m_{1}''+m_{1})-2(\Lambda'(c)+\Lambda''(c)+1)\bigr)\alpha  \mod \mathbb{C}\delta $. 
\end{prop}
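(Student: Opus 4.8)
The plan is to adapt, essentially verbatim, the strategy used for $A_1^{(1)}$ in Proposition \ref{maximum} and Lemma \ref{lemma5.9}, but to run the differential argument \emph{twice}, since Lemma \ref{domdelmaxweights} exhibits the dominant $\delta$-maximal weights of $L(\Lambda')$ as a union of two families rather than one.

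First I would record the $A_2^{(2)}$ analogues of Lemmas \ref{lemma5.1}, \ref{lemma5.2'} and \ref{lemma5.1'}. Using the realization $\fh^* = \bc\omega_0 \oplus \bc\alpha \oplus \bc\delta$, the semidirect decomposition $W = \overset{\circ}{W} \rtimes \bz\alpha^\vee$ of the (infinite dihedral) Weyl group, and the explicit value $\rho = \omega_0 + \omega_1$, I would parametrize $\pi\bigl(T_\Lambda^{\Lambda',\Lambda''}\bigr)$ as $\{\Lambda'+k\alpha\}$ with $k$ ranging over two arithmetic progressions — one on which $\varepsilon(v_{\Lambda,\Lambda'',\lambda})=1$, one on which it equals $-1$ — the latter of common difference the relevant period, which for $A_2^{(2)}$ works out to $2(\Lambda'(c)+\Lambda''(c)+1)$, and obtain closed formulas for $v_{\Lambda,\Lambda'',\lambda}$ and $S_{\Lambda,\Lambda'',\lambda}$ as explicit quadratic-plus-periodic functions of $k$, exactly as in \eqref{ne18} and Lemma \ref{lemma5.1'}. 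The only genuinely new input is the periodic term: the $\delta$-coordinate $n_k$ of the $\delta$-maximal weight of $L(\Lambda')$ through $\Lambda'+k\alpha$ is now interpolated by \emph{two} distinct piecewise-quadratic periodic functions $A^+$ and $A^-$, each a concatenation of downward parabolic arcs, arising respectively from the families $\{\Lambda'-j\alpha,\ \Lambda'+k(2\alpha-\delta)\}$ and $\{\Lambda'-j\alpha,\ \Lambda'+\alpha-\delta+l(2\alpha-\delta)\}$; so one must in addition determine which arc of $A^\pm$ each progression meets (the analogue of the $k_s$ bookkeeping in Proposition \ref{maximum}).

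Next, mimicking the definition of $F$, I would introduce two real-variable extensions $F^+, F^-$ of $S_{\Lambda,\Lambda'',\lambda}$, obtained by substituting $A^+$ resp.\ $A^-$ for the periodic part, so that $S_{\Lambda,\Lambda'',\lambda} = F^\pm(l)$ on the appropriate integers, and for each I would repeat the chain of estimates of Claim \ref{claim1} and the two claims after it: set $\Delta^\pm(t) := F^\pm(t+1)-F^\pm(t)$, bound $\partial_t\Delta^\pm$ and the partials in the weight variables using the bound on $(A^\pm)'$ together with the fact that the parabola index jumps by at least $2$ from $tM$ to $(t+1)M$, deduce that $\Delta^\pm$ is nonincreasing in $t$, reduce to the extremal values of the weight parameters exactly as there, and finally verify $\Delta^\pm(-1) > 0 > \Delta^\pm(0)$ directly for all but finitely many small levels and by an explicit finite case check for the rest. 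For $\varepsilon(v)=1$ this places the maximum at the central value $\lambda \equiv \Lambda' + \frac{1}{2}(m_1 - m_1' - m_1'')\alpha \mod \bc\delta$. For $\varepsilon(v) = -1$ I would, as in Lemma \ref{lemma5.9}, write the relevant function as a rational shift $G^\pm(t) = F^\pm(t - c^\pm)$ of $F^\pm$ with $c^\pm$ computed explicitly from $\rho(c)$, so that the maximum over integers is attained at one of the two nearest integers; transcribing back through the parametrization, these are precisely the two congruence classes $\Lambda' - \bigl(\frac{1}{2}(m_1'+m_1''+m_1)+1\bigr)\alpha$ and $\Lambda' - \bigl(\frac{1}{2}(m_1'+m_1''+m_1) - 2(\Lambda'(c)+\Lambda''(c)+1)\bigr)\alpha \mod \bc\delta$ asserted in the Proposition, the second being the shift of the first by one period $2(\Lambda'(c)+\Lambda''(c)+1)$.

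I expect the main obstacle to be the extra bookkeeping forced by the two branches $A^+, A^-$. In the $A_1^{(1)}$ case the single inequality $\Delta(-1) > 0 > \Delta(0)$ pinned down a unique maximizer for each sign of $\varepsilon(v)$; here each branch can carry its own local maximum of $S_{\Lambda,\Lambda'',\lambda}$ with $\varepsilon(v) = -1$, which is exactly why the Proposition lists two candidate congruence classes, and one must verify that no third position — an interior arc of the "wrong" branch, or a further lattice translate — can beat them. Pinning down which progression lands on which parabolic arc of $A^+$ versus $A^-$, controlling the signs of $\Delta^\pm$ at the two endpoints over the full parameter range, and dispatching the finitely many small-level exceptions by hand is where essentially all the work lies; once those branch-tracking issues are settled, the computation is a mechanical transcription of the $A_1^{(1)}$ one with $\theta = 2\alpha$ (so $\alpha_0 = \delta - 2\alpha$) in place of $\theta = \alpha$.
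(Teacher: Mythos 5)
Your proposal takes essentially the same approach as the paper, which itself only sketches this proof: it states that the $A_1^{(1)}$ argument of Proposition \ref{maximum} and Lemma \ref{lemma5.9} must be rerun for two real interpolations $A^+$ and $A^-$ of the $\delta$-maximal weights coming from the two families in Lemma \ref{domdelmaxweights}. Your two-branch $\Delta^{\pm}$ scheme, with the $\varepsilon(v)=-1$ maximum located at one of two integers differing by the period $2(\Lambda'(c)+\Lambda''(c)+1)$, is precisely that outline carried out in slightly more detail.
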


\begin{corollary}\label{a22cancellation}
Let $\Lambda, \Lambda', \Lambda''$ be as in Proposition \ref{maxS}. Assume further that 
$\Lambda'(c) \geq 2$, $\Lambda''(c) \geq 2$,  $m'_1,m''_1 \neq 1$. Then,  if ${\mu}^{\Lambda',\Lambda''}_\Lambda = \bar{\mu}^{\Lambda',\Lambda''}_\Lambda$, we have 
\[ {\mu}^{\Lambda'',\Lambda'}_\Lambda \neq \bar{\mu}^{\Lambda'',\Lambda'}_\Lambda.\]
\end{corollary}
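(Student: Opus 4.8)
The plan is to mimic the proof of the $A_1^{(1)}$ Non-Cancellation Lemma (Corollary \ref{corcancel}), transporting the argument through the $A_2^{(2)}$ analogues of the relevant structure. First I would record, via Lemma \ref{domdelmaxweights} and Proposition \ref{maxS}, an explicit criterion for the equality ${\mu}^{\Lambda',\Lambda''}_\Lambda = \bar{\mu}^{\Lambda',\Lambda''}_\Lambda$ in terms of the data $(m_0',m_1',m_0'',m_1'',m_0,m_1)$. Concretely, by Proposition \ref{maxS} the $+1$-maximum is achieved at the $\delta$-maximal weight of $L(\Lambda')$ through $\Lambda'+\tfrac12(m_1-m_1'-m_1'')\alpha$, while the $-1$-maximum is achieved at one of two explicit translates; so ${\mu}^{\Lambda',\Lambda''}_\Lambda=\bar{\mu}^{\Lambda',\Lambda''}_\Lambda$ exactly when the relevant pair of $\delta$-maximal weights of $L(\Lambda')$ sit at the same $\delta$-coordinate. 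This is the $A_2^{(2)}$ counterpart of Lemma \ref{lemma5.11}, and I would prove the needed coincidence criterion the same way: using the string description of weights along the $\alpha$-direction and the fact that $P_n\cap P^o(\Lambda')$ is either empty, a singleton at the highest weight, or a two-element $s_1$-orbit. This yields, in each of the two cases for where $\bar\mu$ occurs, an explicit inequality among the $m$'s — the analogue of \eqref{e5.10.3} and \eqref{eq25} — that is necessary and sufficient for the equality to hold.

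Next I would verify the asymmetry: that the inequality characterizing ${\mu}^{\Lambda',\Lambda''}_\Lambda=\bar{\mu}^{\Lambda',\Lambda''}_\Lambda$ and the one characterizing ${\mu}^{\Lambda'',\Lambda'}_\Lambda=\bar{\mu}^{\Lambda'',\Lambda'}_\Lambda$ cannot hold simultaneously. In the $A_1^{(1)}$ case the two inequalities were of the shape $\tfrac12(j+j'')+1\le\tfrac12 j'$ and $\tfrac12\bigl((m'+m''-j)+(m''-j'')\bigr)+1\le\tfrac12(m'-j')$ (and their $A^-$-shifted versions obtained via the identities \eqref{eq22}, \eqref{eq23}, \eqref{neq23}), and one checks they are mutually exclusive by adding them and using $0\le j'\le m'$ etc. I would set up the $A_2^{(2)}$ analogues of those three symmetry identities — the functional equations relating $A^+$ and $A^-$, and relating the data $(m_0',m_1',\dots)$ to the "barred" data $(m_0'-m_1'$-type reflections$,\dots)$ coming from the $s_1$-action — and then run the same additive contradiction. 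The hypotheses $\Lambda'(c)\ge 2$, $\Lambda''(c)\ge 2$, $m_1',m_1''\neq 1$ are exactly what is needed to push the strict inequalities through; in particular excluding $m_1'=1$ or $m_1''=1$ rules out the boundary/degenerate configurations (the analogue of the $m''=1$, $j''\in\{0,1\}$ extremal cases examined by hand in the proof of Proposition \ref{maximum}), and $\Lambda'(c),\Lambda''(c)\ge 2$ gives the slack $\tfrac{M}{m'}-1>0$-type estimates room to be strict.

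Because $A_2^{(2)}$ has the extra imaginary-root structure ($\alpha_0=\delta-2\alpha$, so the $\delta$-maximal weights split into the $A^+$-family and the $A^-$-family), I expect the main obstacle to be bookkeeping: there are now \emph{two} candidate locations for $\bar\mu^{\Lambda',\Lambda''}_\Lambda$ (the two alternatives listed in Proposition \ref{maxS}), so the case analysis doubles — I would have a Case I / Case II split for the $(\Lambda',\Lambda'')$-ordering just as in Corollary \ref{corcancel}, and within each I must produce the correct explicit inequality and then match it against all the cases for the $(\Lambda'',\Lambda')$-ordering. The genuinely delicate point is ensuring that the coincidence criterion coming out of the $A_2^{(2)}$ version of Lemma \ref{lemma5.11} has the clean "one-sided string containment" form $\bigl|\tfrac12(\text{something})\bigr|\le \tfrac12 m_1'$ (or its $A^-$ shift), rather than something that could be satisfied symmetrically; this is precisely where the hypothesis $m_1'\neq 1$ (so that the extremal string of length $1$, which behaves anomalously, is excluded) does the work. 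Once the explicit inequalities are in hand, the mutual exclusivity is a short arithmetic check using $0\le m_1'\le m_0'$, $0\le m_1''\le m_0''$, $m_0=m_0'+m_0''$, entirely parallel to the end of the proof of Corollary \ref{corcancel}, and the corollary follows.
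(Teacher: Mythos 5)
Your plan follows essentially the same route as the paper: the paper's own treatment of Corollary \ref{a22cancellation} is itself only a sketch that reduces the equality $\mu^{\Lambda',\Lambda''}_\Lambda=\bar{\mu}^{\Lambda',\Lambda''}_\Lambda$ to an explicit coincidence criterion on $\delta$-maximal weights (the $A_2^{(2)}$ analogue of Lemma \ref{lemma5.11}), runs a case analysis over the two candidate locations of $\bar{\mu}$ given by Proposition \ref{maxS}, and shows that in the residual cases where the swapped equality could also hold one必 has $\Lambda'(c)<2$, $\Lambda''(c)<2$, $m_1'=1$ or $m_1''=1$. Your outline matches this structure, including the correct identification of where the extra hypotheses enter.
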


The proof of Corollary \ref{a22cancellation} requires a description of the situations in which 
 ${\mu}^{\Lambda',\Lambda''}_\Lambda = \bar{\mu}^{\Lambda',\Lambda''}_\Lambda$. We reduce these situations to certain cases, and show that in most of these cases, if the roles of $\Lambda'$ and $\Lambda''$ are interchanged, then (as in the $\widehat{\mathfrak{sl}_2}$ case) the equality does not occur. In the remaining cases, we show that  $\Lambda'(c) < 2$, $\Lambda''(c) < 2$,  $m'_1 =1$,  or $m''_1 = 1$.

\begin{thm} Let $\Lambda,\Lambda',\Lambda''$ be as in Proposition \ref{maxS}.  Then, 
$L(\Lambda+n\delta)$ is a $\delta$-maximal component of $L(\Lambda')\otimes L(\Lambda'')$
if $n=\min(n_{1},n_{2})$, where $n_{1}$ is such that $\Lambda-\Lambda''+n_{1}\delta\in P^o(\Lambda')$ and
$n_{2}$ is such that $\Lambda-\Lambda'+n_{2}\delta\in P^o(\Lambda'')$.
\end{thm}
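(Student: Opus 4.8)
The plan is to carry out, for $A_2^{(2)}$, precisely the argument that yields Theorem \ref{thm6.2} in the $\widehat{\mathfrak{sl}_{2}}$ case, with Proposition \ref{maxS} and Corollary \ref{a22cancellation} now playing the roles of Proposition \ref{maximum} and Corollary \ref{corcancel}. First, just as in the proof of Corollary \ref{cor6.4}, I would tensor with the one-dimensional modules $L(k\delta)$ to reduce to $\Lambda,\Lambda',\Lambda''\in\bar{P}_+$, so that by Proposition \ref{tensor} the multiplicity of $L(\Lambda+n\delta)$ in $L(\Lambda')\otimes L(\Lambda'')$ equals the coefficient of $e^{n\delta}$ in
\[
\Phi:=\sum_{\lambda\in T_{\Lambda}^{\Lambda',\Lambda''}}\varepsilon(v_{\Lambda,\Lambda'',\lambda})\,c_{\Lambda',\lambda}\,e^{S_{\Lambda,\Lambda'',\lambda}\delta}.
\]
Because each $\lambda\in T_{\Lambda}^{\Lambda',\Lambda''}$ is $\delta$-maximal in $L(\Lambda')$, the series $c_{\Lambda',\lambda}$ has top $\delta$-degree $0$ with strictly positive leading coefficient $\dim L(\Lambda')_{\lambda}$; hence the $\lambda$-summand of $\Phi$ has top $\delta$-degree $S_{\Lambda,\Lambda'',\lambda}$ and a leading coefficient of sign $\varepsilon(v_{\Lambda,\Lambda'',\lambda})$. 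Writing $N$ for the $\delta$-degree of the $\delta$-maximal component of $L(\Lambda')\otimes L(\Lambda'')$ through $\Lambda$, this already gives $N\le\mu^{\Lambda',\Lambda''}_\Lambda$, with equality whenever $\mu^{\Lambda',\Lambda''}_\Lambda>\bar{\mu}^{\Lambda',\Lambda''}_\Lambda$; and since, by Proposition \ref{tensor}, $\Phi$ is the character of a genuine unitarizable Virasoro module, its leading coefficient is nonnegative, which forces $\mu^{\Lambda',\Lambda''}_\Lambda\ge\bar{\mu}^{\Lambda',\Lambda''}_\Lambda$.

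Next I would show $\mu^{\Lambda',\Lambda''}_\Lambda=n_1$. Using $\omega_0(c)=1$, $\omega_1(c)=\frac{1}{2}$ and $\Lambda(c)=\Lambda'(c)+\Lambda''(c)$ (which follows from $\Lambda-\Lambda'-\Lambda''\in Q$ together with $\alpha(c)=\delta(c)=0$), a direct computation gives $\Lambda'+\frac{1}{2}(m_1-m_1'-m_1'')\alpha\equiv\Lambda-\Lambda''\,\mod\,\bc\delta$. Granting the $A_2^{(2)}$ analogue of Lemma \ref{lemma5.1} — that the projection $\pi\colon\fh^*\to\bc\omega_0\oplus\bc\alpha$ killing $\delta$ is injective on $P^o(\Lambda')$ with image $\Lambda'+\bz\alpha$ — the unique element of $P^o(\Lambda')$ with that projection is $\lambda_0:=\Lambda-\Lambda''+n_1\delta$. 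For $\lambda_0$ one has $\lambda_0+\Lambda''+\rho=(\Lambda+\rho)+n_1\delta\in P_{++}$, so the defining identity of $T_{\Lambda}^{\Lambda',\Lambda''}$ forces $v_{\Lambda,\Lambda'',\lambda_0}=e$ and $S_{\Lambda,\Lambda'',\lambda_0}=n_1$; thus $\lambda_0$ contributes to $\Phi$ with sign $+1$ at $\delta$-degree $n_1$, and by Proposition \ref{maxS} this is exactly the $\lambda$ at which the maximum over the sign-$(+1)$ terms is attained, so $\mu^{\Lambda',\Lambda''}_\Lambda=n_1$. Interchanging $\Lambda'$ and $\Lambda''$ yields $\mu^{\Lambda'',\Lambda'}_\Lambda=n_2$; applying the first paragraph to both orderings (and $L(\Lambda')\otimes L(\Lambda'')\simeq L(\Lambda'')\otimes L(\Lambda')$) then gives $N\le\min(n_1,n_2)$.

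For the reverse inequality I would break ties using Corollary \ref{a22cancellation}. Assume without loss of generality $n_1\le n_2$. If $\mu^{\Lambda',\Lambda''}_\Lambda=n_1>\bar{\mu}^{\Lambda',\Lambda''}_\Lambda$, the leading term of $\Phi$ at degree $n_1$ cannot cancel and $N=n_1=\min(n_1,n_2)$. Otherwise $\mu^{\Lambda',\Lambda''}_\Lambda=\bar{\mu}^{\Lambda',\Lambda''}_\Lambda$, and Corollary \ref{a22cancellation} gives $\mu^{\Lambda'',\Lambda'}_\Lambda\neq\bar{\mu}^{\Lambda'',\Lambda'}_\Lambda$, hence $n_2>\bar{\mu}^{\Lambda'',\Lambda'}_\Lambda$; applying the first case to the pair $(\Lambda'',\Lambda')$ gives $N=n_2$, and together with $N\le n_1\le n_2$ this forces $n_1=n_2=N$. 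In every case $N=\min(n_1,n_2)$, which is the assertion.

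The step I expect to be the main obstacle is exactly this last tie-breaking: Corollary \ref{a22cancellation} is available only under the additional hypotheses $\Lambda'(c)\ge 2$, $\Lambda''(c)\ge 2$ and $m_1',m_1''\neq 1$, whereas the theorem is claimed for all $\Lambda'(c),\Lambda''(c)>0$. The genuine remaining work is therefore the finitely many ``small'' configurations — $\Lambda'(c)=1$, $\Lambda''(c)=1$, $m_1'=1$ or $m_1''=1$ — where non-cancellation cannot be invoked in either order, and where one must instead check directly, using the very explicit structure of the level-one (or $m_1=1$) modules of $A_2^{(2)}$, that the leading $\delta$-coefficient of $\Phi$ does not vanish, in the spirit of the finite case-checks that close the proof of Proposition \ref{maximum}. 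The accompanying prerequisites — needed even to define $n_1$ and to run the identification $\mu^{\Lambda',\Lambda''}_\Lambda=n_1$ — are the $A_2^{(2)}$ analogues of Lemmas \ref{lemma5.1} and \ref{lemma5.1'}: injectivity of $\pi$ on $P^o(\Lambda')$ and the explicit formulas for $v_{\Lambda,\Lambda'',\lambda}$ and $S_{\Lambda,\Lambda'',\lambda}$, which is where the two interpolating functions $A^+$ and $A^-$ come in.
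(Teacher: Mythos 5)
Your argument is the one the paper intends: the paper gives no proof of this statement beyond the remark that the $A_1^{(1)}$ method carries over, and in the $A_1^{(1)}$ case the proof of Theorem \ref{thm6.2} is exactly the combination you spell out --- Proposition \ref{tensor} to reduce the question to the top $\delta$-degree of $\sum_{\lambda}\varepsilon(v_{\Lambda,\Lambda'',\lambda})c_{\Lambda',\lambda}e^{S_{\Lambda,\Lambda'',\lambda}\delta}$, the identification of the positive-sign maximum with $n_1$ (and, by symmetry of the tensor product, with $n_2$ in the other ordering), nonnegativity of the multiplicities to exclude a negative leading term, and non-cancellation to break a tie $\mu^{\Lambda',\Lambda''}_\Lambda=\bar{\mu}^{\Lambda',\Lambda''}_\Lambda$. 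Your verification that the maximizing $\lambda$ of Proposition \ref{maxS} is $\Lambda-\Lambda''+n_1\delta$, with $v_{\Lambda,\Lambda'',\lambda}=e$ and $S_{\Lambda,\Lambda'',\lambda}=n_1$, is correct, as is the reduction to $\bar{P}_+$ by tensoring with $L(k\delta)$.

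The obstacle you flag is genuine and is not resolved anywhere in the paper: Corollary \ref{a22cancellation} carries the extra hypotheses $\Lambda'(c)\geq 2$, $\Lambda''(c)\geq 2$ and $m_1',m_1''\neq 1$, whereas the theorem is asserted for all positive levels, so the tie-breaking step is unavailable for the small configurations and one only gets $N\leq \min(n_1,n_2)$ there. The paper implicitly works around exactly this point: Corollary \ref{cor8.7} obtains the saturation factor $4$ by scaling until the hypotheses of Corollary \ref{a22cancellation} hold, and in Theorem \ref{thm8.6} one may take $N\gg 0$, so the restricted version of the present theorem suffices for every application made of it. As a proof of the theorem exactly as stated, however, your write-up (like the paper's) still owes a direct check, in the finitely many low-level or $m_1=1$ configurations, that the coefficient at $\delta$-degree $\min(n_1,n_2)$ does not vanish; note that $\mu^{\Lambda',\Lambda''}_\Lambda=\bar{\mu}^{\Lambda',\Lambda''}_\Lambda$ does not by itself force cancellation, so this is a finite verification rather than a counterexample, but it is real remaining work.
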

\begin{lem} Fix a positive integer $N$. 
Let $\Lambda\in \bar{P}_+$ and let $\lambda\in\Lambda+Q$. 
Then, $N\lambda\in P^o(N\Lambda)$ if
and only if $\lambda\in P^o(\Lambda)$. 
\end{lem}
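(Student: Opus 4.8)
The plan is to argue exactly as for the $\widehat{\mathfrak{sl}_{2}}$ lemma earlier in the paper, with Lemma \ref{domdelmaxweights} playing the role of Corollary \ref{cor5.1}: first reduce to the case of dominant $\lambda$, and then read off both implications from the explicit description of the dominant $\delta$-maximal weights.

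I would begin with the reduction to dominant weights. Since $\delta$ is $W$-invariant and $W$ acts linearly on $\fh^{*}$, a weight is $\delta$-maximal in $L(\Psi)$ if and only if each of its $W$-translates is; hence $P^{o}(\Psi)$ is $W$-stable and, by Lemma \ref{domdelmaxweights}, equals $W\cdot P^{o}(\Psi)_{+}$, for $\Psi=\Lambda$ and $\Psi=N\Lambda$ alike. If $\lambda\in P^{o}(\Lambda)$, write $\lambda=w\lambda_{+}$ with $\lambda_{+}\in P^{o}(\Lambda)_{+}$; then $N\lambda=w(N\lambda_{+})$, so it suffices to show $N\lambda_{+}\in P^{o}(N\Lambda)$. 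Conversely, if $N\lambda\in P^{o}(N\Lambda)$, write $N\lambda=w\nu$ with $\nu\in P^{o}(N\Lambda)_{+}$ and set $\lambda_{+}:=w^{-1}\lambda=\frac{1}{N}\nu$; this lies in $X(H)$, is dominant, and again lies in $\Lambda+Q$, because for dominant $\Lambda$ the element $w^{-1}\Lambda-\Lambda$ is a non-positive integral combination of the simple roots, hence lies in $Q$. Thus in either direction the assertion is reduced to the dominant weight $\lambda_{+}\in\Lambda+Q$.

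For dominant $\lambda\in\Lambda+Q$ I would invoke Lemma \ref{domdelmaxweights} for both $\Lambda$ and $N\Lambda$: membership in $P^{o}(\Lambda)_{+}$ (resp. $P^{o}(N\Lambda)_{+}$) means precisely that $\lambda$ is dominant and of one of the three shapes $\Lambda-j\alpha$, $\Lambda+k(2\alpha-\delta)$, $\Lambda+\alpha-\delta+l(2\alpha-\delta)$ with $j,k,l\in\bz_{\ge 0}$ (resp. with $\Lambda$ replaced by $N\Lambda$). The first two families scale homogeneously, $N(\Lambda-j\alpha)=N\Lambda-(Nj)\alpha$ and $N\bigl(\Lambda+k(2\alpha-\delta)\bigr)=N\Lambda+(Nk)(2\alpha-\delta)$, which settles one direction at once; conversely, if $N\lambda=N\Lambda-j\alpha$ or $N\Lambda+k(2\alpha-\delta)$, then the hypothesis $\lambda-\Lambda\in Q=\bz\alpha\oplus\bz\delta$ forces $N\mid j$ (resp. $N\mid k$), and $\lambda$ returns to the corresponding family for $\Lambda$.

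The main obstacle is the third family $\Lambda+\alpha-\delta+l(2\alpha-\delta)$, which, unlike the other two, carries the fixed offset $\alpha-\delta$ and therefore is not homogeneous under $\lambda\mapsto N\lambda$. Here the hypothesis $\lambda\in\Lambda+Q$ must be used decisively: if $N\lambda=N\Lambda+(2l+1)\alpha-(l+1)\delta$, then $\lambda-\Lambda\in\bz\alpha\oplus\bz\delta$ forces $N\mid(2l+1)$ and $N\mid(l+1)$, hence $N\mid\gcd(2l+1,\,l+1)=1$, i.e. $N=1$; so for $N>1$ this case cannot occur on the $N\Lambda$ side, and for $N=1$ there is nothing to prove. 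I expect that organizing this trichotomy cleanly — matching the three families for $\Lambda$ with those for $N\Lambda$ compatibly in \emph{both} directions, using only the divisibility afforded by $\lambda\in\Lambda+Q$ together with $\Lambda\in\bar{P}_{+}$ — is the one genuinely delicate point; everything else follows the $\widehat{\mathfrak{sl}_{2}}$ argument closely.
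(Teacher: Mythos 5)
Your reduction to dominant weights via $W$-stability and the homogeneous scaling of the first two families follow exactly the route of the paper's one-line proof of the $\widehat{\mathfrak{sl}_2}$ analogue, and that part is fine. The gap is in the third family, and it is fatal. You only treat that family in one direction: you show that a weight of the form $N\Lambda+(2l+1)\alpha-(l+1)\delta$ cannot occur on the $N\Lambda$ side when $N>1$, which handles the implication $N\lambda\in P^o(N\Lambda)\Rightarrow\lambda\in P^o(\Lambda)$. You never address the converse for a weight $\lambda=\Lambda+\alpha-\delta+l(2\alpha-\delta)$ in the third family, i.e.\ you never show $N\lambda\in P^o(N\Lambda)$ for such $\lambda$. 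Your own divisibility computation already shows this cannot be done: $N\lambda-N\Lambda=N(2l+1)\alpha-N(l+1)\delta$ matches none of the three shapes of Lemma \ref{domdelmaxweights} relative to $N\Lambda$ when $N>1$, so $N\lambda$ is not a dominant $\delta$-maximal weight of $L(N\Lambda)$.

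This is not merely a missing step; the forward implication genuinely fails there. Take $\Lambda=\omega_0$, $\lambda=\omega_0+\alpha-\delta=\omega_0-\alpha_0-\alpha_1$ and $N=2$. Then $\lambda(\alpha_0^\vee)=0$ and $\lambda(\alpha_1^\vee)=2$, so $\lambda$ is dominant and $\lambda\le\omega_0$, hence $\lambda\in P(\omega_0)$; moreover $\lambda+n\delta=\omega_0+(n-1)\alpha_0+(2n-1)\alpha_1\not\le\omega_0$ for all $n\ge 1$, so $\lambda\in P^o(\omega_0)$. On the other hand $2\lambda+\delta=2\omega_0-\alpha_0$ is a weight of $L(2\omega_0)$ (since $\langle 2\omega_0,\alpha_0^\vee\rangle=2\ne 0$), so $2\lambda\notin P^o(2\omega_0)$. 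Thus the lemma as stated (for which the paper offers no proof) is false on the $W$-orbit of the third family, precisely at the "genuinely delicate point" you isolate; no completion of your argument is possible without first correcting the statement, e.g.\ by restricting $\lambda$ to the $W$-orbits of the first two families or by allowing a bounded $\delta$-shift between $N\lambda$ and the $\delta$-maximal weight of $L(N\Lambda)$ above it.
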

Combining the above results, we get a description of $\Gamma_2$, which is identical to that of $\widehat{\mathfrak{sl}_2}$ 
(cf. Theorem \ref{thm7.5}).
\begin{theorem} \label{thm8.6} Let $\mathfrak{g}=A_2^{(2)}$. 
Let $\lambda,\mu,\nu \in P_+$ be such that $\lambda+\mu-\nu\in Q$ and both of $\lambda(c)$ and $\mu(c)$ are nonzero. Then, the following are equivalent:

(a)   $(\lambda,\mu,\nu) \in \Gamma_2$.

(b) The following set of inequalities is satisfied for all $w\in W$ and $i=0,1$.
\begin{align*}
\lambda(x_i)+\mu(wx_i)-\nu(wx_i) &\geq 0, \,\,\,\text{and}\\
\lambda(wx_i)+\mu(x_i)-\nu(wx_i) &\geq 0.
\end{align*}
In particular, Conjecture \ref{conj1} is true for this case as well for $s=2$. 
\end{theorem}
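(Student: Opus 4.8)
The plan is to repeat, essentially word for word, the proof of Theorem~\ref{thm7.5}, replacing each $\widehat{\mathfrak{sl}_2}$ ingredient by the $A_2^{(2)}$ ingredient developed above: Lemma~\ref{domdelmaxweights} in place of Corollary~\ref{cor5.1}, Proposition~\ref{maxS} in place of Proposition~\ref{maximum} and Lemma~\ref{lemma5.9}, Corollary~\ref{a22cancellation} in place of Corollary~\ref{corcancel}, and the $\delta$-maximal component theorem and scaling lemma of this section in place of Theorem~\ref{thm6.2} and its companion. The target is to reduce the equivalence (a)$\Leftrightarrow$(b) to the single statement: $L(N\nu)\subset L(N\lambda)\otimes L(N\mu)$ for some $N>1$ if and only if $n:=\min(n_1,n_2)\geq 0$, where $n_1,n_2\in\bz$ are the unique integers with $\nu-\mu+n_1\delta\in P^o(\lambda)$ and $\nu-\lambda+n_2\delta\in P^o(\mu)$; uniqueness holds because, by Lemma~\ref{domdelmaxweights} and $W$-invariance of $\delta$, the image of $P^o(\lambda)$ in $\bar{P}_+\oplus\bc\alpha$ is a single coset of $\bc\delta$ (this is the $A_2^{(2)}$ form of Lemma~\ref{lemma5.1}).

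First I would invoke the $\delta$-maximal component theorem of this section, applied to $\bar{\lambda},\bar{\mu},\bar{\nu}$, together with $L(\bar{\Psi}+k\delta)\simeq L(\bar{\Psi})\otimes L(k\delta)$ ($L(k\delta)$ being one-dimensional), to conclude that $L(\nu+n\delta)$ is a $\delta$-maximal component of $L(\lambda)\otimes L(\mu)$; the scaling lemma then upgrades this to: $L(N\nu+Nn\delta)$ is a $\delta$-maximal component of $L(N\lambda)\otimes L(N\mu)$ for every $N\geq 1$. (It is inside the proof of that $\delta$-maximal component theorem that Corollary~\ref{a22cancellation} does its work, ruling out cancellation of the top $\delta$-coefficient via the symmetry $L(\lambda)\otimes L(\mu)=L(\mu)\otimes L(\lambda)$; for the present proof I simply quote the theorem.) Proposition~\ref{tensor} and Lemma~\ref{virasoro} then show that the $\delta$-string of components at and below $L(N\nu+Nn\delta)$ is a unitarizable Virasoro module of nonzero central charge, so every graded piece is nonzero, whence $L(N\nu)$ occurs for some $N>1$ exactly when $n\geq 0$.

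The remainder is the bookkeeping of Theorem~\ref{thm7.5}, which transcribes unchanged. By [Kac, Proposition 12.5(a)], $n\geq 0$ iff $\nu\in\bigl(P(\lambda)+\mu\bigr)\cap\bigl(P(\mu)+\lambda\bigr)$, and one proves $P(\lambda)=(\lambda+Q)\cap C_\lambda$ with $C_\lambda:=\{\gamma\in\fh^*:\lambda(x_i)-\gamma(wx_i)\geq 0\ \text{for all}\ w\in W,\ i=0,1\}$: the inclusion $\subseteq$ is Theorem~\ref{thm1}; for $\supseteq$, both $\lambda+Q$ and $C_\lambda$ are $W$-stable and $\lambda+Q$ lies in the Tits cone [K$_3$, Exercise 13.1.E.8(a)], so one reduces to $\gamma\in(\lambda+Q)\cap C_\lambda\cap P_+$, where $(\lambda-\gamma)(x_i)\geq 0$ and $(\lambda-\gamma)(c)=0$ (since $Q$ kills $c$) force $\lambda-\gamma\in\bz_+\alpha_0+\bz_+\alpha_1$, i.e.\ $\lambda\geq\gamma$, hence $\gamma\in P(\lambda)$ again by [Kac, Proposition 12.5(a)]. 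Feeding $\nu-\mu\in P(\lambda)$ and $\nu-\lambda\in P(\mu)$ through this description yields precisely the two families of inequalities of (b), giving (a)$\Leftrightarrow$(b). For the ``In particular'' clause, let $P_0,P_1$ be the two maximal parabolics of $A_2^{(2)}$; since $W$ is the infinite dihedral group on $s_0,s_1$, each $W^{P_i}$ has a single element of each length, so $H^*(X_{P_i},\bz)$ has $\bz$-basis $\{\epsilon^n_{P_i}\}_{n\geq 0}$ with $\epsilon^0_{P_i}=1$. I would then compute this ring (via [K$_3$, Exercise 11.3.E.4]) and check that $\epsilon^{n+m}_{P_i}$ occurs with coefficient $1$ in $\epsilon^n_{P_i}\cdot\epsilon^m_{P_i}$ exactly when $n=0$ or $m=0$, while $\epsilon^0_{P_i}\odot_0\epsilon^m_{P_i}=\epsilon^m_{P_i}$ straight from the definition of $\odot_0$; hence the coefficient-one triples $(w_1,w_2,v)\in(W^{P_i})^3$ are exactly those with some $w_j=e$, and the inequalities $I^{P_i}_{(w_1,w_2,v)}$ reduce to the two displayed families, which is Conjecture~\ref{conj1} for $A_2^{(2)}$ and $s=2$.

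Since the substantive work — Proposition~\ref{maxS} and the non-cancellation Corollary~\ref{a22cancellation}, and hence the $\delta$-maximal component theorem — is already in hand, the above is a routine adaptation. The one genuinely new step I expect to require care is the explicit Schubert calculus on $X_{P_0}$ and $X_{P_1}$ for $A_2^{(2)}$: establishing the analogue of $\epsilon^n_{P_0}\cdot\epsilon^m_{P_0}=\binom{n+m}{n}\epsilon^{n+m}_{P_0}$ (or at least the ``coefficient $1$ iff one index is $0$'' dichotomy) and confirming that $\odot_0$ agrees with the cup product against the identity class $\epsilon^0_{P_i}$.
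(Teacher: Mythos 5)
Your proposal matches the paper's own treatment, which likewise establishes Theorem \ref{thm8.6} by transcribing the proof of Theorem \ref{thm7.5} with the $A_2^{(2)}$ ingredients (Lemma \ref{domdelmaxweights}, Proposition \ref{maxS}, Corollary \ref{a22cancellation}, and the $\delta$-maximal component theorem of Section 8) substituted for their $\widehat{\mathfrak{sl}_2}$ counterparts. The only divergence is in the ``In particular'' clause, where the paper quotes Kitchloo's description of the cup product on the $A_2^{(2)}$ flag varieties rather than deriving it from [K$_3$, Exercise 11.3.E.4] (which concerns $\widehat{\mathfrak{sl}_2}$); you correctly isolate this Schubert-calculus verification as the one computation still to be supplied.
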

 The `In particular' statement of the above theorem follows by using the description of the cup product in the cohomology of the full flag variety of $A_2^{(2)}$ given by Kitchloo [Ki]. 

It is clear that if the level of $L(\Lambda')$ or $L(\Lambda'')$ is zero, then the tensor product has a single component. Thus, it is  already saturated. Assume now that the levels of both of $L(\Lambda')$ and $L(\Lambda'')$ are $>0$. Then, since there are representations of level $\frac{1}{2}$, the conditions of Corollary \ref{a22cancellation} are satisfied for any $N\Lambda$, $N\Lambda'$, $N\Lambda''$ with  $\Lambda-\Lambda'-\Lambda''\in Q$,   provided $N \geq 4$.  Hence:
\begin{cor} \label{cor8.7} For $A_2^{(2)}$, $4$ is a saturation factor.
\end{cor}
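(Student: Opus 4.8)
The plan is to transport the argument proving Corollary~\ref{cor6.4} verbatim to the $A_2^{(2)}$ setting, using the $A_2^{(2)}$ analogues developed above (Proposition~\ref{maxS}, Corollary~\ref{a22cancellation}, and the $A_2^{(2)}$ versions of Proposition~\ref{tensor} and of Theorem~\ref{thm6.2}); the only new point is that the hypotheses of the non-cancellation statement Corollary~\ref{a22cancellation} become available precisely after scaling by $4$. First I would dispose of the degenerate case: if $\Lambda'(c)=0$ or $\Lambda''(c)=0$, then one of the two modules is some $L(k\delta)$, which is one-dimensional, so $L(N\Lambda')\otimes L(N\Lambda'')\cong L(N(\Lambda'+\Lambda''))$ for every $N$, and the hypothesis already forces $L(d_o\Lambda)\subset L(d_o\Lambda')\otimes L(d_o\Lambda'')$ for all $d_o\ge1$. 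So from now on I may assume $\Lambda'(c),\Lambda''(c)>0$.

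Next I would run the ``$\delta$-shift rigidity'' step exactly as in the proof of Corollary~\ref{cor6.4}. Applying the $A_2^{(2)}$ analogue of Theorem~\ref{thm6.2} to the projections $\bar\Lambda',\bar\Lambda'',\bar\Lambda$, together with $L(\bar\Psi+k\delta)\cong L(\bar\Psi)\otimes L(k\delta)$ and the one-dimensionality of $L(k\delta)$, produces a unique $\tilde n\in\bz$ such that $L(\Lambda+\tilde n\delta)$ is the $\delta$-maximal component of $L(\Lambda')\otimes L(\Lambda'')$ through $\Lambda$; the same theorem applied to $N\bar\Lambda',N\bar\Lambda'',N\bar\Lambda$, using $P^o(N\bar\Psi)\supset NP^o(\bar\Psi)$ (the scaling lemma above), shows that $L(N\Lambda+N\tilde n\delta)$ is the $\delta$-maximal component of $L(N\Lambda')\otimes L(N\Lambda'')$ through $N\Lambda$. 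Since the hypothesis gives $L(N\Lambda)\subset L(N\Lambda')\otimes L(N\Lambda'')$ for some $N>0$, $\delta$-maximality forces $N\tilde n\ge0$, hence $\tilde n\ge0$. I would then observe that $(4\Lambda,4\Lambda',4\Lambda'')$ satisfies the hypotheses of Corollary~\ref{a22cancellation}: $4\Lambda-4\Lambda'-4\Lambda''\in Q$; because the minimal positive level of $A_2^{(2)}$ is $\frac{1}{2}$ (attained by $\omega_1=\frac{1}{2}\omega_0+\frac{1}{2}\alpha$), one has $4\Lambda'(c)\ge2$ and $4\Lambda''(c)\ge2$; and the $\omega_1$-coefficients of $4\Lambda'$ and $4\Lambda''$ are each $0$ or $\ge4$, hence $\ne1$. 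This is exactly where the value $4$ is forced: for $N\in\{1,2,3\}$ one of ``level $\ge2$'' or ``$\omega_1$-coefficient $\ne1$'' can fail.

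Finally I would conclude as in Corollary~\ref{cor6.4}. By Corollary~\ref{a22cancellation}, at least one of the two orderings of $(\Lambda',\Lambda'')$ has $\mu_{4\Lambda}\ne\bar\mu_{4\Lambda}$, so, since $L(4\Lambda')\otimes L(4\Lambda'')$ is symmetric in its two factors, I may assume there is no cancellation in the top $\delta$-degree term of the Virasoro character $\sum_{\lambda}\varepsilon(v_{4\bar\Lambda,\Lambda'',\lambda})\,c_{4\Lambda',\lambda}\,e^{S_{4\bar\Lambda,\Lambda'',\lambda}\delta}$ of Proposition~\ref{tensor}; write it as $\sum_{k\in\bz_+}c_k\,e^{(4\Lambda(d)+4\tilde n-k)\delta}$ with $c_0\ne0$. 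By the $A_2^{(2)}$ version of Proposition~\ref{tensor} this is the character of a unitarizable $\mathrm{Vir}$-module whose irreducible summands all share the same, strictly positive, central charge (strict positivity coming, as in the untwisted case, from the concavity of $x\mapsto x/(x+g)$ and the positivity of both levels). Hence Lemma~\ref{virasoro} forces $c_k\ne0$ for every $k>1$. If $\tilde n=0$, then $L(4\Lambda)$ is itself the $\delta$-maximal component, occurring with multiplicity $c_0\ne0$; if $\tilde n>0$, then $4\tilde n\ge4>1$, so $c_{4\tilde n}\ne0$, which says precisely that $L(4\Lambda)$ occurs in $L(4\Lambda')\otimes L(4\Lambda'')$ (matching $S$-exponents with genuine $\delta$-shifts via $L_0=-d+p$ on each $\fg$-isotypical piece with highest weight in $\Lambda+\bz\delta$, as in Corollary~\ref{cor6.4}). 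The hard part here is the non-cancellation point — guaranteeing $c_0\ne0$ — but that is entirely encapsulated in Corollary~\ref{a22cancellation}; everything else is bookkeeping carried over from the $A_1^{(1)}$ case.
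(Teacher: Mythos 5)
Your proposal is correct and follows essentially the same route as the paper: the paper's own argument for Corollary \ref{cor8.7} is exactly the transport of the proof of Corollary \ref{cor6.4} to $A_2^{(2)}$, with the single new observation that multiplying by $N\geq 4$ forces the hypotheses of Corollary \ref{a22cancellation} (levels $\geq 2$ because the minimal positive level is $\tfrac12$, and $\omega_1$-coefficients $\neq 1$ after scaling). Your write-up in fact supplies more of the bookkeeping (the $\tilde n\geq 0$ rigidity step, the strict positivity of the coset central charge, and the use of Lemma \ref{virasoro}) than the paper makes explicit.
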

\begin{remark} 
When the Kac-Moody Lie algebra $\fg$ is infinite dimensional, then the saturated tensor semigroup $\Gamma_s$ is {\em not}  finitely generated,
for any $s\geq 2$. Thus, it is not clear a priori that there  exists a saturation factor for such a $\fg$.
\end{remark}

\vskip5ex

\noindent
Address: Department of Mathematics,
University of North Carolina,
Chapel Hill, NC  27599--3250 \\
\noindent
(emails: shrawan@email.unc.edu; merrickb@email.unc.edu).

\begin{thebibliography}{999}


\bibitem[BK]{Belkale-kumar}
P. Belkale and  S.  Kumar.  Eigenvalue problem and a new product in cohomology
of flag varieties, {\em Inventiones Math.} {\bf 166} (2006),  185--228.

\bibitem[CG]{CG}
N. Chriss and V. Ginzburg. {\em Representation Theory and Complex Geometry},
 Birkh\"{a}user (1997).

\bibitem[Kac]{kac}
V. Kac. {\em Infinite Dimensional Lie Algebras}, Third Edition,
 Cambridge University Press (1990).

\bibitem[KR]{KR}
V. Kac and A. Raina. {\em Bombay Lectures on Highest Weight Representations of Infinite Dimensional Lie Algebras}, 
 World Scientific Publishing Company Pvt. Ltd. (1987).

\bibitem[KW]{KW}
V. Kac and M. Wakimoto.  Modular and conformal invariance constraints in representation theory of affine algebras,  {\em Advances in Math.} {\bf 70} (1988), 156--234.

 \bibitem[Ka]{Ka}
M. Kashiwara.
The flag manifold of Kac-Moody Lie algebra, in:
{\em Algebraic Analysis, Geometry, and Number Theory} (J-I. Igusa, ed.), The
Johns Hopkins University Press, Baltimore, 1989, pp. 161--190.

\bibitem[KS]{KS}
M. Kashiwara and M. Shimozono.
Equivariant $K$-theory of affine flag manifolds
and affine Grothendieck polynomials,
{\em Duke Math. J.} {\bf 148}  (2009), 501-538.

\bibitem[Ki]{Ki}
N. Kitchloo. On the topology of Kac-Moody groups, Preprint (2011).

\bibitem[KK]{KK}
B. Kostant and S. Kumar. $T$-equivariant $K$-theory of generalized
flag varieties,  {\em J. Diff. Geometry} {\bf 32} (1990), 549--603.

\bibitem[K$_1$]{K1}
S. Kumar.  Geometry of Schubert cells and cohomology of Kac-Moody Lie algebras,  {\em J. Diff. Geometry} {\bf 20} (1984), 389--431.

\bibitem[K$_2$]{K0}
S. Kumar.  Rational homotopy theory of flag varieties associated to  Kac-Moody groups,  In: {\em Infinite Dimensional Groups with Applications}, MSRI Publications vol.  {\bf 4} (1985), 233--273.


\bibitem[K$_3$]{K}
S. Kumar. {\em Kac-Moody Groups, their Flag Varieties and Representation Theory},
Progress in Mathematics,
  Vol. {\bf 204}, Birkh\"{a}user (2002).


\bibitem[K$_4$]{K2}
S. Kumar. Positivity in $T$-Equivariant $K$-theory of flag varieties associated to
Kac-Moody groups, Preprint (2012).

\bibitem[K$_5$]{K3}
S. Kumar. Additive Eigenvalue Problem (a survey), Preprint (2013).

\bibitem[MFK]{MFK}
D. Mumford, J. Fogarty and F. Kirwan. {\em Geometric Invariant Theory, 3rd edn.},
Ergebnisse der Mathematik und Ihrer Grenzgebiete, Vol. {\bf 34},
Springer (1994).

\end{thebibliography}
\end{document}